\documentclass[12pt]{article}
\usepackage{amsmath}
\usepackage{amsthm}
\usepackage{amssymb}
\usepackage{latexsym}
\usepackage{color}
\usepackage{graphicx}
\usepackage[all,cmtip]{xy}
\usepackage{stmaryrd}
\usepackage{tikz-cd}
\usetikzlibrary{decorations.pathmorphing}
\usepackage{mathrsfs}

\usepackage[colorlinks=true, linkcolor=blue, citecolor=red]{hyperref}

\newcommand{\C}{\mathbb{C}}

\newcommand{\PP}{\mathbb P}

\renewcommand{\mod}{\mathrm{mod}}

\DeclareMathOperator{\Coh}{\mathrm{Coh}}
\DeclareMathOperator{\proj}{\mathrm{proj}}

\DeclareMathOperator{\inj}{\mathrm{inj}}
\DeclareMathOperator{\length}{\mathrm{length}}
\DeclareMathOperator{\GP}{\mathrm{GP}}

\DeclareMathOperator{\Hom}{Hom}
\DeclareMathOperator{\Spec}{\mathrm{Spec}}

\DeclareMathOperator{\Pic}{Pic}

\DeclareMathOperator{\tor}{tor}

\DeclareMathOperator{\CM}{CM}
\DeclareMathOperator{\SCM}{SCM}
\DeclareMathOperator{\Per}{Per}
\DeclareMathOperator{\Sing}{Sing}
\DeclareMathOperator{\add}{add}
\DeclareMathOperator{\sg}{sg}

\newcommand{\mc}[1]{\mathcal{#1}}
\newcommand{\mk}[1]{\mathfrak{#1}}
\newcommand{\mb}[1]{\mathbb{#1}}
\newcommand{\msr}[1]{\mathscr{#1}}

\newcommand{\wt}{\widetilde}

\newcommand{\Span}[1]{\left<#1\right>}

\newcommand{\End}{\operatorname{\mathrm{End}}}
\newcommand{\Ker}{\operatorname{\mathrm{Ker}}}
\renewcommand{\Im}{\operatorname{\mathrm{Im}}}

\newcommand{\Cok}{\operatorname{\mathrm{Coker}}}

\newcommand{\id}{\mathrm{id}}

\newcommand{\rank}{\mathrm{rank}}

\newcommand{\Ext}{\mathop{\mathrm{Ext}}\nolimits}
\newcommand{\ind}{\operatorname{\mathrm{ind}}}

\newcommand{\RHom}{\mathop{\mathbb R\mathrm{Hom}}\nolimits}
\newcommand{\mcRHom}{\mathop{\mathbb R\mathcal{H}om}\nolimits}
\newcommand{\mcHom}{\mathop{\mathcal{H}om}\nolimits}
\newcommand{\mcExt}{\mathop{\mathcal{E}xt}\nolimits}

\mathchardef\mhyphen="2D

\theoremstyle{plain}
\newtheorem{theorem}{Theorem}[section]
\newtheorem{lemma}[theorem]{Lemma}
\newtheorem{proposition}[theorem]{Proposition}
\newtheorem{corollary}[theorem]{Corollary}
\newtheorem{claim}[theorem]{Claim}

\theoremstyle{definition}
\newtheorem{definition}[theorem]{Definition}
\newtheorem{example}[theorem]{Example}
\newtheorem{remark}[theorem]{Remark}

\title{Special Cohen--Macaulay sheaves on partial resolutions of rational surfaces singularities}
\author{Hokuto Uehara}
\date{}
\begin{document}

\maketitle

\begin{abstract}
We introduce the categories $\CM(X)$ and $\SCM(X)$ of reflexive sheaves on a minimal partial resolution $f\colon X \to \Spec R$ of a rational surface singularity $\Spec R$. The main result of this paper establishes that $\SCM(X)$ possesses a natural Frobenius structure, serving as a geometric counterpart to the algebraic Frobenius structure on special Cohen--Macaulay $R$-modules, introduced by Iyama--Wemyss and Iyama--Kalck--Wemyss--Yang. Utilizing this geometric framework, we establish an exact equivalence between $\SCM(X)$ and the category of special Cohen--Macaulay $R$-modules equipped with a specific exact structure, which induces a triangle equivalence between their stable categories. Consequently, this provides a direct, geometric proof of the Iyama--Kalck--Wemyss--Yang equivalence and yields a Buchweitz-type equivalence $\underline{\SCM}(X) \simeq D_{\sg}(X)$.
\end{abstract}

\renewcommand{\thefootnote}{}
\footnote[0]{2020 Mathematics subject classification: 16G50, 14B05, 18G80.}
\renewcommand{\thefootnote}{\arabic{footnote}}

\section{Introduction}
The interplay between the geometry of resolutions of singularities and the category of Cohen--Macaulay modules is a fundamental theme in algebraic geometry and representation theory. The significance of special Cohen--Macaulay (SCM) modules in this context is highlighted by two pivotal, yet distinct, developments. 

On the geometric side, the Wunram correspondence \cite{MR926422} establishes a bijection between the irreducible exceptional curves $\{E_i\}_{i \in I}$ of the minimal resolution of a quotient surface singularity over $\mb{C}$ and the nontrivial indecomposable SCM modules over the coordinate ring. This serves as a module-theoretic analogue of the McKay correspondence at the level of objects. 

On the homological side, Iyama, Kalck, Wemyss, and Yang \cite{MR3320570} uncovered a deep categorical structure governed by SCM modules. Given a minimal partial resolution $f\colon X \to \Spec R$ of a rational surface singularity, they introduced a natural exact structure on the category of SCM modules and established a triangle equivalence between the resulting stable category and the singularity category of $X$.

A key technical ingredient in our approach is the study of globally generated reflexive sheaves on $X$, subject to specific vanishing conditions on extensions. Specifically, we define the following subcategories of $\Coh (X)$:
\begin{align*}
\CM(X) &:= \{ \mc{F} \mid \text{$\mc{F}$ is a globally generated reflexive sheaf, } \Ext_X^1(\mc{F},\omega_X)=0 \},\\
\SCM(X) &:= \{ \mc{F} \mid \text{$\mc{F}$ is a globally generated reflexive sheaf, } \Ext_X^1(\mc{F},\mc{O}_X)=0 \},\\
\mk{V}(X) &:= \{ \mc{F} \mid \text{$\mc{F}$ is a globally generated locally free sheaf, } \Ext_X^1(\mc{F},\mc{O}_X)=0 \}.
\end{align*}

By studying reflexive pullbacks $\wt{M}^X:=f^*M/\tor f^*M$ on $X$ for $M\in \SCM (R)$, our approach bypasses the use of singularity categories of $X$ entirely and relies directly on the geometric structure of partial resolutions. The central theme of this paper is to uncover the exact and triangulated structures of these categories. 

As a preparatory step, we formulate a Wunram-type correspondence for minimal partial resolutions (Proposition \ref{prop:Wunram_partial}). While this object-wise bijection can be essentially deduced from \cite[Theorem 4.3]{MR3886189}, it serves as a fundamental building block for our framework. Specifically, there is a one-to-one correspondence between the set of exceptional divisors $\{E_i\}_{i \in I_f}$ of the minimal partial resolution $f$ and the set of isomorphism classes of nontrivial indecomposable SCM $R$-modules $M$ such that $\wt{M}^X$ is locally free on $X$. We denote by $N_f$ the direct sum of these SCM $R$-modules together with $R$.

As illustrated below, these ingredients form a three-way correspondence:
\begin{equation}\label{eqn:wunram-type0}
\begin{tikzcd}[row sep=15pt, column sep=15pt]
  & \{E_i\}_{i\in I} \arrow[dl, leftrightarrow, "\text{Wunram-type}" sloped] \arrow[dr, leftrightarrow] & \\
  \ind\SCM(R) \setminus\{R\} & & \ind\SCM(X)\setminus\{\mc{O}_X\} \arrow[ll, leftrightarrow, "\text{Lemma \ref{lem:special_partial}}"'{pos=0.25}] \\
  & \{E_i\}_{i\in I_f} \arrow[dl, leftrightarrow, "\text{Proposition \ref{prop:Wunram_partial}}" sloped, pos=0.4] \arrow[dr, leftrightarrow, 
  sloped, pos=0.4] \arrow[uu, hook] & \\
  \ind(\add N_{f})\setminus\{R\} \arrow[uu, hook] & & \ind(\add \wt{N_f})\setminus \{\mc{O}_X\} \arrow[ll, leftrightarrow] \arrow[uu, hook]
\end{tikzcd}
\end{equation}

Building on this setup, our main original results unfold in the following two aspects:

\vspace{0.5em}
\noindent\textbf{1. A Frobenius structure on $\SCM(X)$.} 
The highlight and main result of this paper is the discovery that the category $\SCM(X)$ naturally inherits a Frobenius structure. This provides a purely geometric realization of the Frobenius structures previously studied in algebraic settings by Iyama and Wemyss \cite{MR3006691}. 

\begin{theorem}[Theorem \ref{thm:Frob_SCM}]
The exact category $\SCM(X)$ is a Frobenius category. Its projective-injective objects are precisely given by $\add \wt{N_{f\circ g^c}}$, where $g^c$ is the canonical model of the minimal resolution $g\colon Y\to X$.
\end{theorem}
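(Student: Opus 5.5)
The plan is to transport the known algebraic Frobenius structure to $X$ rather than build one from scratch. By Lemma \ref{lem:special_partial}, reflexive pullback $M\mapsto \wt{M}^X$ should give an equivalence $\SCM(R)\to \SCM(X)$, with quasi-inverse $f_*$; here $f_*\wt{M}^X\cong M$ because both sides are reflexive and agree off the singular points. The exact structure on $\SCM(X)$ is the one inherited from $\Coh(X)$: a sequence in $\SCM(X)$ is admissible exactly when it is a short exact sequence of sheaves. First I check that this is an exact category. Since $\SCM(X)$ contains $\mc{O}_X$, it suffices to show closure under extensions inside $\Coh(X)$. Given $0\to\mc{F}'\to\mc{F}\to\mc{F}''\to0$ with outer terms in $\SCM(X)$:
\begin{itemize}
\item reflexivity of $\mc{F}$ holds because depth is preserved under extensions;
\item the vanishing $\Ext^1(\mc{F},\mc{O}_X)=0$ follows from the long exact sequence;
\item global generation holds because $R^1f_*\mc{F}'=0$. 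This vanishing comes from the rationality argument: a globally generated sheaf on a rational partial resolution has vanishing $H^1$. It lets sections of $\mc{F}''$ lift, and then the five lemma shows $\mc{F}$ is globally generated.
\end{itemize}

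Next, the projective objects. Let $\mc{P}\in\add\wt{N_{f\circ g^c}}$ and let $0\to\mc{F}'\to\mc{F}\to\mc{F}''\to0$ be admissible. I need $\Ext^1_X(\mc{P},\mc{F}')=0$ and $\Ext^1_X(\mc{F}'',\mc{P})=0$. The intended mechanism uses the canonical model $Z$ of $Y$ and the factorization $f\circ g^c\colon Z\to \Spec R$ through the contraction. By Proposition \ref{prop:Wunram_partial} applied to the minimal partial resolution $Z\to\Spec R$, the sheaf $\wt{N_{f\circ g^c}}^{Z}$ is locally free on $Z$. Its pullback and reflexive restriction to $X$ should be "locally as good as $\mc{O}_X$", namely:
\begin{itemize}
\item it lies in $\SCM(X)$;
\item it satisfies $\Ext^1(\mc{P},\omega_X)=0$ as well, so it lies in $\CM(X)\cap\SCM(X)$;
\item the dual $\mc{P}^\vee$ is again globally generated.
\end{itemize}
I then compute via Serre-type duality on $X$ relative to $\Spec R$. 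Because the exceptional curves of $X$ which are contracted in $Z$ are $(-2)$-curves, $\omega_X$ is pulled back from $Z$. This is what should force $\Ext^1(\mc{F}'',\mc{P})\cong \Ext^1(\mc{P}^\vee\otimes\ldots)$-type groups to vanish, using the SCM condition on $\mc{F}''$. For projectivity, I will use $\Ext^1(\mc{P},\mc{F}')=H^1(\mc{P}^\vee\otimes\mc{F}')$ where $\mc{P}$ is locally free, and deal with reflexive $\mc{P}$ by passing to $Y$. The bridge is $\Ext^1$ of SCM objects vanishing against $\mc{O}_X$ together with global generation of $\mc{P}^\vee\otimes\mc{F}'$ modulo torsion.

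Enough projectives come from pushforward generators. For $\mc{F}\in\SCM(X)$, take a surjection $\mc{P}_0\to\mc{F}$ from $\mc{O}_X^{\oplus n}$, which exists by global generation, enlarged by summands of $\wt{N_{f\circ g^c}}$. Then I must show the kernel is again in $\SCM(X)$. The kernel is reflexive because the cokernel is torsion free, and its global generation follows from an $H^1$-vanishing. The $\Ext^1$-vanishing needs $\Hom(\mc{P}_0,\mc{O}_X)\to\Hom(\mc{K},\mc{O}_X)$ to be surjective, and I ensure this by choosing $\mc{P}_0$ as a right $\add\wt{N}$-approximation. Enough injectives come dually: take a left $\add\wt N$-approximation $\mc{F}\to\mc{I}$, obtained by pulling back the corresponding algebraic approximation $M\to N'$ from \cite{MR3006691}, and verify that the cokernel is SCM. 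Finally, to see that every projective-injective object lies in $\add\wt{N_{f\circ g^c}}$, note that a projective $\mc{F}$ admits a split surjection from some $\mc{P}_0$ as above, so it is a summand of $\mc{P}_0$. Krull--Schmidt, together with the three-way correspondence \eqref{eqn:wunram-type0}, then identifies the indecomposables.

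The main obstacle is the $\Ext^1$-vanishing $\Ext^1_X(\mc{F}'',\mc{P})=0$ and $\Ext^1_X(\mc{P},\mc{F}')=0$ for non-locally-free $\mc{P}$. This is where the $(-2)$-curves and the canonical model must enter. I would first try to compute these groups on $Y$ after reflexive pullback, using that $g^*$ of $\wt{N_{f\circ g^c}}$ is locally free on $Y$ with $\omega_Y$ trivial along the relevant curves.
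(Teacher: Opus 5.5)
There is a genuine gap. The heart of the theorem is only gestured at (``should force'', ``$\ldots$-type groups''). That core consists of three things: the non-locally-free objects of $\add\wt{N_{f\circ g^c}}$ (the $\wt{M_i}^X$ with $i\in I_g$ and $E_i$ not a $(-2)$-curve) satisfy $\Ext^1_X(\mc P,-)=0=\Ext^1_X(-,\mc P)$ on $\SCM(X)$; $\proj\SCM(X)=\inj\SCM(X)$; and there are enough of these objects. Moreover, the auxiliary claims you lean on are false or misdirected:
\begin{itemize}
\item $\mc P^\vee$ is not globally generated once $I_f\neq\emptyset$. For $\mc P=\mc M_i$ with $i\in I_f$, $\det\mc M_i^\vee=\mc L_i^{-1}$ has degree $-1$ on $E_i$.
\item ``$\omega_X$ is pulled back from $Z$'' has the direction reversed, since $Z=Y^c$ lies over $X$. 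What is true is that $\omega_Y$ is pulled back from $Y^c$, and that says nothing about $\Ext^1_X$.
\item In the syzygy step, surjectivity of $\Hom(\mc P_0,\mc O_X)\to\Hom(\mc K,\mc O_X)$ is automatic from $\Ext^1_X(\mc F,\mc O_X)=0$. The real obstruction is $\Ext^1_X(\mc K,\mc O_X)\cong\Ker(\Ext^2_X(\mc F,\mc O_X)\to\Ext^2_X(\mc P_0,\mc O_X))$, and $\Ext^2$ need not vanish. For $\frac{1}{3}(1,1)$ one has $\Omega M_1\cong M_2$, so $\Ext^2_R(M_1,R)\cong\Ext^1_R(M_2,R)\neq0$.
\end{itemize}

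The ``transport'' idea also fails as stated. The exact structure $\SCM(X)$ inherits from $\Coh(X)$ is not the one $\SCM(R)$ inherits from $\mod R$. The functor $f_*$ preserves short exact sequences, but the reflexive pullback of $0\to M\to N'\to C\to 0$ is exact only when $\Hom_R(N_f,-)$ keeps it exact (compare the proof of Theorem \ref{thm:DX}). Accordingly, Iyama--Wemyss's projective-injectives $\add N_{(f\circ g)^c}$ differ in general from $\add N_{f\circ g^c}$, which in addition contains the $M_i$ for $(-2)$-curves $E_i$ with $i\in I_f$. So pulling back their injective hulls does not produce admissible sequences in $\SCM(X)$.

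The missing idea is the local-to-global reduction of Lemma \ref{lem:local-global}. Take $\mc F\in\SCM(X)$ and $\mc G$ globally generated. Then $H^1(X,\mcHom_X(\mc F,\mc G))=0$ by Lemma \ref{lem:surjection}, because $\mb R^1f_*\mc F^\vee=0$. Also $\mcExt^1_X(\mc F,\mc G)$ is supported at the singular points. Hence $\Ext^1_X(\mc F,\mc G)\cong\Ext^1_T(\mc F|_T,\mc G|_T)$.

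Combine this with Lemma \ref{lem:locallyfree}: every SCM $T$-module is a restriction up to free summands, and $\wt{M_i}^X|_T$ is free for $i\in I_f$. This reduces both the identification $\proj\SCM(X)=\add\wt{N_{f\circ g^c}}$ and the symmetry $\Ext^1_X(\mc F,\mc G)\cong\Ext^1_X(\mc G,\mc F)$ on $\SCM(X)$ to the algebraic results of \cite{MR3006691} over $T$, namely $\proj\SCM(T)=\add N_{g_T^c}$ and Lemma 4.4 there. The symmetry is what yields $\proj=\inj$.

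Enough projectives and injectives do not come from hand-built approximations, but from the tilting bundle. $\wt{N_f}$ and $\wt{N_f}\otimes\omega_X$ give enough projectives and injectives in $\CM(X)$. $\SCM(X)$ is functorially finite in $\CM(X)$, since it has the additive generator $\wt{N_{f\circ g}}$, so \cite[Proposition 4.3]{MR3006691} applies.
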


When $f = \id$ (so that $X = \Spec R$), this recovers the Frobenius structure on $\SCM(R)$ established in \cite{MR3006691}.

\vspace{0.5em}
\noindent
\vspace{0.5em}
\noindent
\textbf{2. Exact equivalences and geometric proofs of triangle equivalences.} 
Under the assumption that $X$ has rational double points—which is essential  in \cite{MR3320570} to equip $\SCM_{N_f}(R)$ with a Frobenius structure having $\proj \SCM_{N_f}(R) = \add N_f$  (and forces $g^c = \id$ so that $N_{f\circ g^c} = N_f$)—we categorify the three-way correspondence \eqref{eqn:wunram-type0} as follows:
\begin{theorem}[Theorem \ref{thm:DX}]
The assignment $M \mapsto \wt{M}^X$ yields an exact equivalence between the exact category $\SCM_{N_f}(R)$ and $\SCM(X)$, which restricts to an equivalence between $\add N_f$ and $\add \wt{N_f}$. Consequently, this induces a triangle equivalence between their respective stable categories:
\[
\underline{\SCM}_{N_f}(R) \xrightarrow{\sim} \underline{\SCM}(X).
\]
\end{theorem}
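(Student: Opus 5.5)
We prove the theorem by showing that the functor $\Phi\colon M\mapsto \wt{M}^X$ is an equivalence of additive categories $\SCM(R)\to\SCM(X)$, with quasi-inverse $f_*$. We then check that $\Phi$ and $f_*$ both preserve and reflect the relevant conflations. The triangle equivalence of stable categories then follows formally, because an exact equivalence of Frobenius categories matches the projective-injective objects on the two sides.

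\textbf{Step 1: additive equivalence.} By Lemma \ref{lem:special_partial}, for $M\in\SCM(R)$ the sheaf $\wt{M}^X$ is globally generated, reflexive, and satisfies $\Ext^1_X(\wt{M}^X,\mc{O}_X)=0$. Moreover $f_*\wt{M}^X\cong M$. The latter holds because $f_*$ of a reflexive sheaf is reflexive on the normal surface $\Spec R$, and the two modules agree away from the singular point. For fully faithfulness we use that both $\Hom_X(\wt{M}^X,\wt{M'}^X)$ and $\Hom_R(M,M')$ are reflexive and agree off the exceptional locus, so they coincide. Essential surjectivity follows from the bijection on indecomposables in Lemma \ref{lem:special_partial}: every $\mc{F}\in\SCM(X)$ is isomorphic to $\wt{(f_*\mc{F})}^X$, with $f_*\mc{F}\in\SCM(R)$. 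Restricting the bijection to locally free objects, Proposition \ref{prop:Wunram_partial} gives $\add N_f\simeq\add\wt{N_f}$.

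\textbf{Step 2: exactness.} The exact structure on $\SCM_{N_f}(R)$ consists of the short exact sequences $0\to L\to M\to M'\to 0$ that remain exact after applying $\Hom_R(N_f,-)$. The exact structure on $\SCM(X)$ consists of the short exact sequences of sheaves. We argue in two directions.

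First, let $0\to\mc{F}'\to\mc{F}\to\mc{F}''\to 0$ be exact in $\SCM(X)$. Applying $f_*$ gives a sequence that is exact on the right, provided $R^1f_*\mc{F}'=0$. This vanishing holds since $\mc{F}'$ is globally generated and $R^1f_*\mc{O}_X=0$ by rationality, so $\mc{F}'$ is a quotient of a trivial sheaf and $R^1f_*$ is right exact on a one-dimensional fibre. Exactness after $\Hom_R(N_f,-)$ follows because $\Hom_R(N_f,f_*-)\cong\Hom_X(\wt{N_f},-)=H^0(\wt{N_f}^{\vee}\otimes -)$, using that $\wt{N_f}$ is locally free. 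It then suffices to show $H^1(\wt{N_f}^{\vee}\otimes\mc{F}')=0$. This should follow from $\Ext^1_X(\mc{F}',\wt{N_f})$-type duality together with the generation properties of $\wt{N_f}$. More concretely, $\wt{N_f}^\vee$ restricted to the exceptional curves has nonnegative degree, which with the global generation of $\mc{F}'$ gives the vanishing.

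Conversely, let $0\to L\to M\to M'\to 0$ be $\Hom(N_f,-)$-exact. The pulled-back sequence $0\to\wt{L}^X\to\wt{M}^X\to\wt{M'}^X\to0$ is left exact by reflexivity. For right exactness we argue locally at a singular point $x$ of $X$. The sections $\Hom(\wt{N_f},\wt{M'}^X)$ lift, and the summands of $\wt{N_f}$ together with $\mc{O}_X$ generate $\wt{M'}^X$ near the exceptional divisors $E_i$, $i\in I_f$. Hence the cokernel, which is supported on the exceptional locus, vanishes.

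\textbf{Step 3: stable categories.} Under the rational double point hypothesis, both categories are Frobenius: $\SCM_{N_f}(R)$ by \cite{MR3320570}, and $\SCM(X)$ by Theorem \ref{thm:Frob_SCM} with $g^c=\id$. Their projective-injective objects are $\add N_f$ and $\add\wt{N_f}$ respectively, and these correspond under $\Phi$ by Step 1. An exact equivalence of Frobenius categories that matches projective-injective objects induces a triangle equivalence of stable categories, which gives $\underline{\SCM}_{N_f}(R)\simeq\underline{\SCM}(X)$.

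The main obstacle is Step 2, namely matching $\Hom(N_f,-)$-exactness with surjectivity of sheaves. In particular, the vanishing $H^1(\wt{N_f}^\vee\otimes\mc{F}')=0$ and the local generation argument require a careful degree computation on the exceptional curves, using the defining property of $\wt{N_f}$ from Proposition \ref{prop:Wunram_partial}.
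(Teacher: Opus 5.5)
Your Step 2 has a genuine gap in the direction $\SCM_{N_f}(R)\to\SCM(X)$: you use the hypothesis that $\Hom_R(N_f,\beta)$ is surjective to get the wrong exactness property.

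For \emph{any} short exact sequence $0\to L\to M\to M'\to 0$ in $\SCM(R)$, two things hold unconditionally. The map $\wt{L}\to\wt{M}$ is injective, because $\wt{L}$ is torsion-free and the map is an isomorphism off the exceptional locus. The map $\wt{M}\to\wt{M'}$ is surjective, because $f^*M\to f^*M'\to\wt{M'}$ is surjective and factors through $\wt{M}$ (as $\wt{M'}$ is torsion-free). So your local generation argument proves something that needs no hypothesis.

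What is not automatic is exactness in the middle, and reflexivity cannot supply it. The exceptional locus of $f$ is a divisor on $X$, and an inclusion of reflexive sheaves that is an isomorphism off a divisor need not be an isomorphism (e.g.\ $\mc{O}_X(-E)\subset\mc{O}_X$).

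Here is a concrete failure. Let $R$ be an $A_1$-singularity and $f$ its minimal resolution, so $N_f=R\oplus M_1$ and $\wt{M_1}=\mc{L}$ is a line bundle with $\mc{L}\cdot E=1$. The syzygy sequence $0\to M_1\to R^{2}\to M_1\to 0$ is not $\Hom_R(N_f,-)$-exact. Its pullback $\mc{L}\to\mc{O}_X^{2}\to\mc{L}$ is not exact (compare first Chern classes). Its middle homology is $\Ker(\mc{O}_X^{2}\to\mc{L})/\mc{L}\cong\mc{L}^{-1}/\mc{L}\cong\mc{O}_E(-1)$. Here $X$ is smooth, so no local argument at singular points of $X$ can detect the failure.

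The missing idea is to transport $\Hom(N_f,-)$-exactness through the tilting equivalence \eqref{eqn:tilting}, as the paper does:
by \eqref{eqn:N=wtN}, $0\to\Hom_X(\wt{N_f},\wt{L})\to\Hom_X(\wt{N_f},\wt{M})\to\Hom_X(\wt{N_f},\wt{M'})\to 0$ is exact. The three sheaves lie in $\msr{T}_{-1}\subset{}^{-1}\Per(X/R)$, and $\RHom_X(\wt{N_f},-)$ restricts to an equivalence ${}^{-1}\Per(X/R)\simeq\mod\End_X(\wt{N_f})$. Hence the sequence is short exact in ${}^{-1}\Per(X/R)$, and therefore in $\Coh(X)$ by the long exact sequence of cohomology sheaves. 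Equivalently, the middle homology $Q$ satisfies $\RHom_X(\wt{N_f},Q)=0$, so $Q=0$ because $\wt{N_f}$ is tilting.

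In the other direction, your reduction to $H^1(X,\wt{N_f}^\vee\otimes\mc{F}')=0$ is correct, but the reason you give has the wrong sign. It is $\wt{N_f}$, being globally generated, that has nonnegative degrees on the exceptional curves; for the dual, $c_1(\wt{M_i}^\vee)\cdot E_i=-1$. The vanishing follows at once from Lemma \ref{lem:surjection}: $\mb{R}^1f_*(\wt{N_f}^\vee)=0$ is part of $\Ext^1_X(\wt{N_f},\mc{O}_X)=0$, and $\mc{F}'$ is globally generated. Alternatively, use that $\wt{N_f}$ is projective in ${}^{-1}\Per(X/R)$.

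A small correction in Step 1: $f_*$ of a reflexive sheaf on $X$ need not be reflexive. In the paper's $\frac{1}{n}(1,1)$ example, $f_*\mc{L}^{\otimes n}\cong\mk{m}_R$. The isomorphism $f_*\wt{M}\cong M$ holds instead because $M$ is reflexive, as in Lemma \ref{lem:pullback}(ii).

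Step 3 matches the paper.
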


This geometric perspective bridges algebraic representations and sheaf theory on minimal partial resolutions, immediately yielding a Buchweitz-type equivalence without requiring the intermediate algebraic machinery used in previous works:
\begin{theorem}[Theorem \ref{thm:buchweitz}]
There is a triangle equivalence between $\underline{\SCM}(X)$ and $D_{\sg}(X)$.
\end{theorem}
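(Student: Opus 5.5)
The plan is to deduce the Buchweitz-type equivalence by composing the triangle equivalence $\underline{\SCM}_{N_f}(R)\simeq\underline{\SCM}(X)$ of Theorem \ref{thm:DX} with a direct geometric description of $D_{\sg}(X)$. I will not pass through the algebra $\End_R(N_f)$. Instead I will construct a functor $\underline{\SCM}(X)\to D_{\sg}(X)=D^b(\Coh X)/\Per(X)$ that sends a sheaf $\mc{F}$ to its image in the Verdier quotient, and show it is a triangle equivalence. Throughout I work in the setting of Theorem \ref{thm:DX}, so $X$ has only rational double points and $g^c=\id$. By Theorem \ref{thm:Frob_SCM}, the projective-injectives of $\SCM(X)$ are then $\add\wt{N_f}$, which are locally free sheaves.

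\textbf{Step 1 (the functor).} The inclusion $\SCM(X)\hookrightarrow \Coh X\to D^b(\Coh X)\to D_{\sg}(X)$ sends conflations to triangles. Since $\add\wt{N_f}$ consists of locally free sheaves, these objects become zero in $D_{\sg}(X)$. By the standard universal property (Keller/Buchweitz style), the composite factors through an exact functor of triangulated categories $F\colon\underline{\SCM}(X)\to D_{\sg}(X)$. One must check that the suspension in $\underline{\SCM}(X)$, given by cokernels of injective hulls $\mc{F}\hookrightarrow P\twoheadrightarrow \Sigma\mc{F}$, maps to $[1]$. This is immediate from the triangle $\mc{F}\to P\to\Sigma\mc{F}$ together with $P\simeq 0$.

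\textbf{Step 2 (fully faithful).} For $\mc{F},\mc{G}\in\SCM(X)$ I will compute $\Hom_{D_{\sg}}(\mc{F},\mc{G})$ as $\varinjlim_n \Hom_{D^b}(\Omega^n\mc{F},\mc{G}[n])$, in the style of Buchweitz and Orlov. The syzygies $\Omega^n$ are taken with respect to the locally free projectives of the Frobenius structure, and these stay inside $\SCM(X)$. The key input is a vanishing statement: $\Ext^i_X(\mc{F},P)=0$ for $i>0$ when $\mc{F}\in\SCM(X)$ and $P\in\add\wt{N_f}$. Since the singularities of $X$ are Gorenstein (rational double points), $\omega_X$ is invertible, and local reflexive sheaves are maximal Cohen--Macaulay. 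Hence the local $\mcExt^{i}(\mc{F},P)$ vanish for $i>0$, and the global vanishing reduces to the $\Ext^1$ condition together with $f$ having fibres of dimension $\le 1$. With this vanishing, the usual argument identifies the colimit with the stable Hom $\underline{\Hom}(\mc{F},\mc{G})$.

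\textbf{Step 3 (dense).} Given $\mc{C}\in D^b(\Coh X)$, I will first shift it and truncate modulo perfect complexes, which replaces it by a sheaf $\mc{E}[m]$. Taking a high syzygy with respect to global generation yields a reflexive sheaf, maximal Cohen--Macaulay at the singular points. The remaining task is to correct this sheaf, modulo locally free sheaves, to one that is globally generated with $\Ext^1(-,\mc{O}_X)=0$. To do this I will take a universal extension by copies of $\mc{O}_X$, killing $\Ext^1(\mc{F},\mc{O}_X)$, and then use Lemma \ref{lem:special_partial} with the Wunram-type correspondence. Together these match local MCM summands at each singular point with summands of some $\wt{M}^X$.

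I expect this density step to be the main obstacle. The difficulty is that global generation and the vanishing of $\Ext^1(-,\mc{O}_X)$ are global conditions, while $D_{\sg}(X)$ sees only the local data at the singular points. My first attempt will be to use $D_{\sg}(X)\simeq\bigoplus_{x\in\Sing X}\underline{\CM}(\widehat{\mc{O}}_{X,x})$ (up to idempotent completion). I will then show that every indecomposable local MCM module arises as a summand of the completion of $\wt{M}^X$ for some $M\in\SCM(R)$, using the bijection of diagram \eqref{eqn:wunram-type0} between exceptional curves contracted by $X\to\Spec R$ and local indecomposable MCM modules at the ADE points.
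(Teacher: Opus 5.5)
Step 1 is fine. Step 2 is fine up to two repairable slips.
\begin{enumerate}
\item The shift is on the wrong side. Since $\Omega^n\mc{F}\cong\mc{F}[-n]$ in $D_{\sg}(X)$, your $\varinjlim_n\Hom_{D^b}(\Omega^n\mc{F},\mc{G}[n])$ approximates $\Hom_{D_{\sg}}(\mc{F},\mc{G}[2n])$, not $\Hom_{D_{\sg}}(\mc{F},\mc{G})$. What your vanishing $\Ext^{>0}_X(\mc{F},P)=0$ actually gives is $\Hom_{D_{\sg}}(\mc{F},\mc{G})\cong\Ext^n_X(\mc{F},\Omega^n\mc{G})\cong\underline{\Hom}(\mc{F},\mc{G})$ for $n\gg0$.
\item You divide by all of $\Per(X)$, not only by complexes built from $\add\wt{N_f}$. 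So you need either $\Per(X)=\mathrm{thick}(\wt{N_f})$, which follows from the tilting equivalence \eqref{eqn:tilting}, or the vanishing of $\Ext^i_X(\mc{F},\mc{P})$ for every locally free $\mc{P}$ and $i\ge 2$, which holds here.
\end{enumerate}

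The genuine gap is Step 3: density is left open, and neither of your sketches works as written.

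In the first sketch, ``a high syzygy with respect to global generation'' is not available. A coherent sheaf on the non-affine $X$ need not be a quotient of $\mc{O}_X^{\oplus r}$. Syzygies with respect to arbitrary locally free covers are eventually reflexive but need not be globally generated. The universal extension by $\mc{O}_X^{\oplus r}$ kills $\Ext^1_X(-,\mc{O}_X)$ and preserves global generation (as $H^1(X,\mc{O}_X)=0$), but it cannot create it.

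In the second sketch, you invoke the wrong correspondence. The divisors $E_i$, $i\in I_f$, of $X\to\Spec R$ correspond to the locally free $\wt{M_i}^X$, which vanish in $D_{\sg}(X)$. The non-free indecomposable MCM $T$-modules are the $M'_i$ with $i\in I_g$, attached to the curves contracted by $g\colon Y\to X$. The input you need is Lemma \ref{lem:locallyfree}: $\wt{M_i}^X|_T\cong M'_i\oplus F_i$ with $F_i$ free, so ``summand'' must mean ``up to free summands''. You also give no argument for dropping ``up to idempotent completion'', i.e.\ for why hitting every object of $\underline{\CM}(T)$ yields density in $D_{\sg}(X)$ itself.

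In the paper, density is not proved by hand. The theorem is read off from the diagram in \S\ref{subsec:comparison}: Theorem \ref{thm:DX} is composed with the IKWY chain $\underline{\SCM}_{N_f}(R)\simeq\underline{\GP}(\End_R(N_f))\simeq D_{\sg}(\End_R(N_f))\simeq D_{\sg}(X)$. There, essential surjectivity comes from the Iwanaga--Gorenstein property of $\End_R(N_f)$ together with Buchweitz's theorem, which is exactly the ingredient your Step 3 lacks.

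Your Orlov idea can nevertheless be completed without it, as follows.
\begin{enumerate}
\item Let $G\colon D_{\sg}(X)\to\overline{D_{\sg}(X)}$ be the fully faithful embedding into the idempotent completion.
\item Let $\Psi\colon\overline{D_{\sg}(X)}\xrightarrow{\sim}\underline{\CM}(T)$ be the equivalence given by Orlov and Buchweitz. It is induced by $\iota^*$, and $\underline{\CM}(T)$ is Krull--Schmidt, hence idempotent complete.
\item Then $\Psi\circ G\circ F\cong\iota^*$. This is a triangle equivalence by Proposition \ref{prop:SCMXCMT}, because $\SCM(T)=\CM(T)$ for the Gorenstein ring $T$.
\item Hence $G$ is essentially surjective, so $G$ is an equivalence, and therefore so is $F$.
\end{enumerate}
This route makes Step 2 superfluous and shows in passing that $D_{\sg}(X)$ is idempotent complete.

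Alternatively, your first sketch can be repaired inside the tilting heart.
\begin{enumerate}
\item The equivalence \eqref{eqn:tilting} identifies $\Per(X)$ with $K^b(\add\wt{N_f})$, so every object of $D_{\sg}(X)$ is a shift of some $M\in{}^{-1}\Per(X/R)$.
\item The syzygies of $M$ with respect to $\add\wt{N_f}$ lie in $\msr{T}_{-1}$, so they are globally generated sheaves.
\item The second syzygy is torsion-free and the third is reflexive.
\item Your universal extension of the third syzygy then lands in $\SCM(X)$.
\end{enumerate}
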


Finally, by composing Theorem \ref{thm:DX} with the pullback functor to the formal completions at the singular points of $X$, we obtain a direct, geometric proof of the triangle equivalence originally established in characteristic $0$ by Iyama, Kalck, Wemyss, and Yang  in \cite{MR3320570}, confirming their result in arbitrary characteristic:
\begin{theorem}[Corollary \ref{cor:IKWY}]
There is a triangle equivalence 
\[
\underline{\SCM}_{N_f}(R) \xrightarrow{\sim} \underline{\CM}(T),
\]
where $T$ is the product of the formal completions at the singular points of $X$.
\end{theorem}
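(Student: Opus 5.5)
The plan is to obtain the equivalence by composing Theorem \ref{thm:DX} with a localization-and-completion argument on the singularity category of $X$, rather than by working directly with modules over $T$. Theorem \ref{thm:DX} already gives a triangle equivalence $\underline{\SCM}_{N_f}(R)\xrightarrow{\sim}\underline{\SCM}(X)$. Theorem \ref{thm:buchweitz} gives $\underline{\SCM}(X)\simeq D_{\sg}(X)$. So it remains to identify $D_{\sg}(X)$ with $\underline{\CM}(T)$, where $T=\prod_{x\in\Sing X}\widehat{\mc{O}}_{X,x}$.

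For that identification I would use the standard chain
\[
D_{\sg}(X)\;\to\;\bigoplus_{x\in \Sing X} D_{\sg}(\mc{O}_{X,x})\;\to\;\bigoplus_{x} D_{\sg}(\widehat{\mc{O}}_{X,x})\;\simeq\;\underline{\CM}(T).
\]
The ingredients are as follows.

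\textbf{First arrow (localization).} Since $X$ has isolated singularities, the restriction functors to the local rings at the finitely many singular points give an equivalence up to direct summands, in the sense of Orlov's localization theorem. The argument is that $D_{\sg}(X)$ is supported on $\Sing X$, a finite set of points.

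\textbf{Second arrow (completion).} Because $\mc{O}_{X,x}$ is an isolated singularity, completion induces an equivalence up to direct summands $D_{\sg}(\mc{O}_{X,x})\to D_{\sg}(\widehat{\mc{O}}_{X,x})$. This is also due to Orlov, or follows from Knörrer/Auslander-type arguments, since $\widehat{\mc{O}}_{X,x}\otimes-$ is faithful on finite-length Hom spaces.

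\textbf{Last equivalence.} The last step is Buchweitz's theorem. It applies because rational double points are Gorenstein, and hence so is $T$.

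Both of these steps are characteristic-free, which is why the result holds in arbitrary characteristic.

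The main obstacle is that the first two arrows are a priori only equivalences up to direct summands. So one must show that the composite functor is essentially surjective onto $\underline{\CM}(T)$. I would do this concretely through $\SCM(X)$, using the fact that every object there is reflexive. Given an indecomposable MCM $\widehat{\mc{O}}_{X,x}$-module $L$, rational double points have finite CM type. The minimal resolution $g$ of $X$ near $x$ is therefore dominated by $Y$, and the Wunram-type correspondence on $Y$ (Lemma \ref{lem:special_partial} together with \eqref{eqn:wunram-type0}) provides, for each exceptional curve $E_i$ with $i\notin I_f$, an indecomposable object of $\SCM(X)$ which is non-locally-free exactly at the point $x$ lying under $E_i$. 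The completed stalk of this object at $x$ is the Wunram/McKay module of $\widehat{\mc{O}}_{X,x}$ attached to $E_i$.

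Since these modules exhaust the nonfree indecomposable MCM modules over $\widehat{\mc{O}}_{X,x}$ (the McKay correspondence for rational double points, valid in every characteristic via Artin's classification and Esnault's description of reflexive modules), the composite is dense. Fully faithfulness then follows from the first two arrows: an idempotent-complete image that is dense, sitting inside an equivalence up to summands, is an equivalence. Composing everything yields $\underline{\SCM}_{N_f}(R)\xrightarrow{\sim}\underline{\CM}(T)$ as a triangle equivalence, since each functor in the chain is triangulated.
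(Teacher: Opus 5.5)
There is a genuine gap: your route assumes the step $\underline{\SCM}(X)\simeq D_{\sg}(X)$, and the paper gives that step no independent proof. Theorem~\ref{thm:buchweitz} is read off from the comparison diagram in \S\ref{subsec:comparison}. The bottom row of that diagram is $\iota^*\circ\wt{(-)}$, which is Theorem~\ref{thm:DX} composed with Proposition~\ref{prop:SCMXCMT}, i.e.\ exactly the corollary you are proving. The top row is the Iyama--Kalck--Wemyss--Yang chain, which is the statement itself, established in characteristic $0$. So invoking it here is circular.

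You might instead mean to get $\underline{\SCM}_{N_f}(R)\simeq D_{\sg}(X)$ from the first three arrows of that top row, through $\underline{\GP}(\End_R(N_f))$ and the tilting bundle. In that case you are reproducing their proof with a detour through $\SCM(X)$, and your claim that everything is characteristic-free is unsupported. Showing directly that $\SCM(X)\to D^b(X)\to D_{\sg}(X)$ induces an equivalence is a real theorem, not a formality.

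There is also a smaller issue. Your density argument computes the composite on objects as $\mc{F}\mapsto\mc{F}|_T$, which presupposes that the Buchweitz-type equivalence is the natural functor. Theorem~\ref{thm:buchweitz} only asserts that some equivalence exists. Conversely, if an abstract equivalence were granted, density would be automatic: $\underline{\SCM}(X)$ is Krull--Schmidt, so $D_{\sg}(X)$ would be idempotent complete, and Orlov's functor, an equivalence up to summands, would already be an equivalence.

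The missing idea is that full faithfulness of $\iota^*\colon\underline{\SCM}(X)\to\underline{\SCM}(T)=\underline{\CM}(T)$ can be proved directly, with no singularity categories. Here $\SCM(T)=\CM(T)$ because $T$ is a product of rational double points.

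\begin{itemize}
\item For $\mc{F},\mc{G}\in\SCM(X)$, the vanishing $\Ext^1_X(\mc{F},\mc{O}_X)=0$ gives $\mb{R}^1f_*\mc{F}^\vee=0$.
\item Lemma~\ref{lem:surjection} then kills $H^1(X,\mcHom_X(\mc{F},\mc{G}))$.
\item The local-to-global sequence gives $\Ext^1_X(\mc{F},\mc{G})\cong H^0(X,\mcExt^1_X(\mc{F},\mc{G}))\cong\Ext^1_T(\mc{F}|_T,\mc{G}|_T)$; this is Lemma~\ref{lem:local-global}(i).
\item Both categories are Frobenius with the inherited exact structure (Theorem~\ref{thm:Frob_SCM}), and $\iota^*$ sends the projectives $\add\wt{N_f}$ to free $T$-modules.
\item Lemma~\ref{lem:general_result} therefore identifies $\Hom_{\underline{\SCM}(X)}(\mc{F},\mc{G}[1])$ with $\Hom_{\underline{\CM}(T)}(\mc{F}|_T,\mc{G}|_T[1])$. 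Since $[1]$ is an autoequivalence, this gives full faithfulness.
\end{itemize}

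Your density argument via the Wunram-type correspondence on $Y$ is essentially Lemma~\ref{lem:locallyfree} and can be kept. However, you only assert that the completed stalk is $M'_i$ up to free summands, whereas the paper proves it by the Chern class computation $a_{ih}=\delta_{ih}$. Composing the resulting Proposition~\ref{prop:SCMXCMT} with Theorem~\ref{thm:DX} is the paper's proof.
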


\vspace{1em}
The paper is organized as follows. In Section 2, we establish the foundations of reflexive pullbacks. In Section 3, we introduce the categories $\CM(X)$ and $\SCM(X)$, and generalize the Wunram-type correspondence to minimal partial resolutions. In Section 4, we prove our main result by endowing $\SCM(X)$ with a Frobenius structure. Finally, in Section 5, we establish the exact and triangle equivalences, yielding the Buchweitz-type equivalence and the geometric proof of the IKWY theorem.

\paragraph{Acknowledgements}
Some of the ideas in this article were conceived during the author's visit to University of Graz in 2025. He is grateful to Martin Kalck for his warm hospitality and for many stimulating discussions that helped shape this work. He also thanks Wahei Hara for stimulating discussion. 
He is supported by the Grants-in-Aid for Scientific Research (No.~23K03074). 

\paragraph{AI discloser}
During the preparation of this work, the author used Google Gemini in order to translate parts of the draft into English, proofread the text, and improve overall readability. After using this tool/service, the author reviewed and edited the content as needed and takes full responsibility for the content of the publication.

\paragraph{Convention}
Throughout the paper, $k$ denotes an algebraically closed field of arbitrary characteristic. 
We assume that all rings $R$ and schemes $X$ are Noetherian. We also assume that all $R$-modules are finitely generated and all sheaves on $X$ are coherent. 

We implicitly identify coherent sheaves on $\Spec R$ with finitely generated $R$-modules under the equivalence of abelian categories $\Coh(\Spec R) \cong \mod R$. For a sheaf $\mc{F}$ on a scheme $X$, $\tor \mc{F}$ denotes the torsion part of $\mc{F}$. 

By a \textit{partial resolution} of a normal scheme $X$, we mean a proper birational morphism $g\colon Y \to X$ from a normal scheme $Y$.
A partial resolution is \textit{minimal} if $\omega_Y$ is $g$-nef.

By a Cohen--Macaulay $R$-module, we always mean a maximal Cohen--Macaulay module.
We denote by $\CM (R)$ the category of Cohen--Macaulay $R$-modules. 

For an exact category $\msr{E}$, $\inj \msr{E}$ (respectively, $\proj \msr{E}$) denotes the full subcategory consisting of injective objects (respectively, projective objects). 

For an additive category $\msr{A}$,
let $\ind \msr{A}$ denote the set of isomorphism classes of indecomposable objects in $\msr{A}$.

For objects $\mc{F}, \mc{G}$ of a triangulated category $\msr{T}$ and $n \in \mb{Z}$, we set 
$$
\Ext^n_{\msr{T}}(\mc{F}, \mc{G}) := \Hom_{\msr{T}}(\mc{F}, \mc{G}[n]).
$$

\section{Preliminaries}\label{sec:preliminaries}
\subsection{Rational surface singularities in arbitrary characteristic}

Let $X$ be an excellent normal scheme. In characteristic zero, $X$ is said to have rational singularities if there exists a resolution of singularities $g\colon Y\to X$ such that $\mb{R}^i g_* \mc{O}_Y = 0$ for all $i > 0$. By the Grauert--Riemenschneider vanishing theorem, this automatically implies that $X$ is Cohen--Macaulay and $\mb{R}^i g_* \omega_Y = 0$ for all $i>0$.

In arbitrary characteristic, particularly in dimensions $\ge 3$, the failure of the Grauert--Riemenschneider vanishing theorem necessitates a more careful definition. Following Kov\'acs \cite{MR3057950}, a resolution $g\colon Y\to X$ is called a \textit{rational resolution} if $\mb{R}^i g_* \mc{O}_Y = 0$ and $\mb{R}^i g_* \omega_Y = 0$ for all $i > 0$, and $X$ is defined to have rational singularities if such a rational resolution exists. 

However, throughout this paper, we focus exclusively on dimension two (e.g., normal surfaces over a field, or spectra of normal two-dimensional complete local rings). In this case, these characteristic-dependent subtleties disappear. 

\begin{remark}\label{rem:rational_surface_char_p}
For two-dimensional excellent normal schemes, resolutions of singularities exist. Furthermore, by \cite[Theorem 10.4]{MR3057950}, the single condition $\mb{R}^1 g_* \mc{O}_Y = 0$ alone guarantees both that $X$ is Cohen--Macaulay and that $\mb{R}^1 g_* \omega_Y = 0$. Thus, in dimension two, the classical definition suffices in any characteristic. 

Moreover, as is well known, if $X$ has rational surface singularities, this vanishing condition naturally extends to any partial resolution $h\colon Z \to X$. Namely, by applying the Leray spectral sequence and Zariski's Main Theorem to a resolution dominating $Z$, one immediately obtains $\mb{R}h_*\mc{O}_Z \cong \mc{O}_X$. 
It also follows that $Z$ has rational singularities.
\end{remark}


\subsection{Reflexive sheaves on surfaces}
For a sheaf $\mc{F}$ on a normal scheme $X$, its dual is defined as $\mc{F}^\vee:=\mcHom_X(\mc{F},\mc{O}_X)$, and $\mc{F}$ is called reflexive if the natural morphism $\mc{F}\to \mc{F}^{\vee\vee}$ is an isomorphism.

In what follows, we assume that $X$ is a $2$-dimensional normal scheme. Let us summarize several basic facts which we will freely use in the subsequent calculations:
\begin{itemize}
    \item  A sheaf $\mc{F}$ on $X$ is reflexive if and only if it is Cohen--Macaulay (i.e., $\mc{F}_x$ is a Cohen--Macaulay $\mc{O}_{X,x}$-module for every $x\in X$).
    \item  Any reflexive sheaf $\mc{F}$ is locally free on the smooth locus of $X$. Consequently, for any integer $i>0$ and any sheaf $\mc{F}'$, the support of $\mcExt_X^i(\mc{F},\mc{F}')$ is of dimension $\le 0$.
    \item  For any integer $i>0$, $\mcExt_X^i(\mc{F},\omega_X)=0$ if $\mc{F}$ is reflexive. Similarly, $\mcExt_X^i(\mc{F}, \mc{F}') = 0$  for any sheaf $\mc{F}'$ if $\mc{F}$ is locally free.
    \item Let $f\colon X\to \Spec H^0(X,\mc{O}_X)$ be the natural morphism. In our setting, since the fibers of $f$ have dimension at most $1$, the higher direct image $\mb{R}^2f_*$ vanishes for any coherent sheaf. Thus, for any sheaves $\mc{F}, \mc{F}'$ on $X$, the local-to-global spectral sequence yields a short exact sequence
    \[
        0 \to \mb{R}^1f_*\mcHom_X(\mc{F}, \mc{F}') \to \Ext^1_X(\mc{F}, \mc{F}') \to f_*\mcExt^1_X(\mc{F}, \mc{F}') \to 0.
    \]
    Consequently, $\Ext_X^1(\mc{F}, \mc{F}') = 0$ holds if and only if
    \[
        \mb{R}^1f_*\mcHom_X(\mc{F}, \mc{F}') = 0 \quad \text{and} \quad f_*\mcExt^1_X(\mc{F}, \mc{F}') = 0.
    \]
    In our arguments, we will primarily apply this equivalence when $\mc{F}$ is a reflexive sheaf, evaluating the local term $f_*\mcExt^1_X(\mc{F}, \mc{F}')$ via the support and vanishing properties described above.
\end{itemize}

\begin{lemma}\label{lem:surjection}
Let $f\colon X\to W$ be a birational projective morphism between $2$-dimensional normal schemes, and $\mc{F}$ be a reflexive sheaf on $X$ satisfying $\mb{R}^1f_*(\mc{F}^\vee)=0$.  
Suppose that $\mc{G}$ is $f$-globally generated.
Then 
\[
\mb{R}^1f_*\mcHom_X(\mc{F},\mc{G})=0.
\]
\end{lemma}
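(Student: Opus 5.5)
Since $\mc{G}$ is $f$-globally generated, the natural map $f^*f_*\mc{G}\to\mc{G}$ is surjective. The question is local on $W$, so we may assume $W$ is affine. Choosing finitely many generators of the $\mc{O}_W$-module $f_*\mc{G}$ then gives a surjection $\mc{O}_X^{\oplus n}\to\mc{G}$. Let $\mc{K}$ be its kernel, so that
\[
0\to\mc{K}\to\mc{O}_X^{\oplus n}\to\mc{G}\to 0 .
\]
Applying $\mcHom_X(\mc{F},-)$ yields the exact sequence
\[
0\to \mcHom_X(\mc{F},\mc{K})\to (\mc{F}^\vee)^{\oplus n}\to \mcHom_X(\mc{F},\mc{G})\to \mcExt^1_X(\mc{F},\mc{K}).
\]

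Let $\mc{Q}$ be the image of $(\mc{F}^\vee)^{\oplus n}\to\mcHom_X(\mc{F},\mc{G})$ and $\mc{C}$ its cokernel. Then $\mc{C}\subset\mcExt^1_X(\mc{F},\mc{K})$. Since $\mc{F}$ is reflexive, it is locally free on the smooth locus, so this $\mcExt^1$ is supported in dimension $\le 0$, and hence so is $\mc{C}$.

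The argument then has two steps.
\begin{enumerate}
\item From $0\to\mc{Q}\to\mcHom_X(\mc{F},\mc{G})\to\mc{C}\to0$ and $\mb{R}^1f_*\mc{C}=0$ (zero-dimensional support), we get a surjection $\mb{R}^1f_*\mc{Q}\to\mb{R}^1f_*\mcHom_X(\mc{F},\mc{G})$. It therefore suffices to show $\mb{R}^1f_*\mc{Q}=0$.
\item From $(\mc{F}^\vee)^{\oplus n}\to\mc{Q}\to 0$ we get $\mb{R}^1f_*(\mc{F}^\vee)^{\oplus n}\to\mb{R}^1f_*\mc{Q}\to\mb{R}^2f_*(\text{kernel})$. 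The first term vanishes by hypothesis. The last vanishes because the fibers of $f$ have dimension $\le1$, so $\mb{R}^2f_*=0$ on coherent sheaves (the same fact used in the preliminaries, valid for any projective $f$ with fibers of dimension $\le 1$, in particular a birational one between surfaces). Hence $\mb{R}^1f_*\mc{Q}=0$.
\end{enumerate}

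The main point to handle carefully is that $\mcHom_X(\mc{F},-)$ is not right exact when $\mc{F}$ is only reflexive. The plan above absorbs the failure of right exactness into the cokernel $\mc{C}$, which has zero-dimensional support and is therefore invisible to $\mb{R}^1f_*$. The only other input is the vanishing of $\mb{R}^2f_*$ for a morphism with at most one-dimensional fibers.
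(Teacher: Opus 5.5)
Your proof is correct and takes essentially the same route as the paper. You reduce to $W$ affine, take a surjection $\mc{O}_X^{\oplus n}\to\mc{G}$, split $\mcHom_X(\mc{F},\mc{O}_X^{\oplus n})\to\mcHom_X(\mc{F},\mc{G})$ into its image and a cokernel with zero-dimensional support, and conclude using $\mb{R}^2f_*=0$ and $\mb{R}^1f_*(\mc{F}^\vee)=0$. The only cosmetic difference is that you control the cokernel by embedding it in $\mcExt^1_X(\mc{F},\mc{K})$, whereas the paper simply notes that the map is surjective wherever $\mc{F}$ is locally free.
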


\begin{proof} We may assume $W$ is affine. Since there is a surjection 
$\mc{O}_X^{\oplus r}\to \mc{G}$ for some integer $r>0$, the natural morphism
\[\phi\colon \mcHom_X(\mc{F},\mc{O}_X^{\oplus r})\to \mcHom_X(\mc{F},\mc{G})\]
is surjective outside the singular points of $X$. Since the fibers of $f$ have dimension at most $1$, we have $\mb{R}^2f_* = 0$. Hence, there are exact sequences
\begin{align*}
&\mb{R}^1f_*\mcHom_X(\mc{F},\mc{O}_X^{\oplus r})\to \mb{R}^1f_*\Im \phi \to 0,\\
&\mb{R}^1f_*\Im \phi\to \mb{R}^1f_*\mcHom_X(\mc{F},\mc{G})\to \mb{R}^1f_*\Cok \phi\to 0.
\end{align*}
Since the support of $\Cok \phi$ is at most $0$-dimensional, we have $\mb{R}^1f_*\Cok \phi=0$.
Hence, the vanishing of $\mb{R}^1f_*(\mc{F}^\vee)$ implies the result.
\end{proof}

\subsection{Generalities for reflexive pull-backs}
Throughout this subsection, 
let $g\colon Y\to X$ 
be a birational projective morphism between 
excellent $2$-dimensional normal schemes with rational singularities. In particular, we have $\mb{R}g_*\mc{O}_Y\cong \mc{O}_X$
and $g$ has at most $1$-dimensional fibers. 

For $\mc{F}\in \Coh (X)$, we set $\widetilde{\mc{F}}^Y:=g^*\mc{F}/\tor g^*\mc{F}$. If it is clear from the context, we suppress the superscript $Y$
 and write $\widetilde{\mathcal{F}}$ instead of $\widetilde{\mathcal{F}}^Y$.

\begin{lemma}\label{lem:XY}
Let $f\colon X\to W$ be a partial resolution. 
For any $\mc{G}\in \Coh (W)$, we have $\widetilde{\widetilde{\mc{G}}^X}^Y\cong \widetilde{\mc{G}}^Y$.
\end{lemma}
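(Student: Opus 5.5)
We have $g\colon Y\to X$ and $f\colon X\to W$. Here $\wt{\mc{G}}^Y$ means the reflexive pullback of $\mc{G}$ along the composite $f\circ g\colon Y\to W$, that is, $(f\circ g)^*\mc{G}/\tor$. The plan is to compare both sides as quotients of $g^*f^*\mc{G}\cong (f\circ g)^*\mc{G}$ and check that they coincide. The key observation is that a quotient of a sheaf by its torsion is characterized as the image of the sheaf in its restriction to the generic point. Since $Y$ is integral, the torsion of a coherent sheaf $\mc{E}$ is the kernel of $\mc{E}\to \mc{E}\otimes K(Y)$, so $\mc{E}/\tor\mc{E}$ is the image of $\mc{E}$ in $\mc{E}_\eta$.

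First, set $\mc{T}:=\tor f^*\mc{G}$, so that $0\to \mc{T}\to f^*\mc{G}\to \wt{\mc{G}}^X\to 0$ is exact. Applying the right exact functor $g^*$ gives a surjection
\[
g^*f^*\mc{G}\twoheadrightarrow g^*\wt{\mc{G}}^X
\]
whose kernel is the image of $g^*\mc{T}$. Since $\mc{T}$ is supported on a proper closed subset of $X$ and $g$ is birational (hence dominant), $g^*\mc{T}$ is supported on the proper closed subset $g^{-1}(\operatorname{Supp}\mc{T})$. Therefore $g^*\mc{T}$ is torsion, and so is its image. Composing with the quotient by torsion gives a surjection
\[
(f\circ g)^*\mc{G}\twoheadrightarrow g^*\wt{\mc{G}}^X\twoheadrightarrow \wt{\wt{\mc{G}}^X}^Y.
\]
The target is torsion free, so this surjection factors through $\wt{\mc{G}}^Y=(f\circ g)^*\mc{G}/\tor$.

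It remains to show that the resulting surjection $\wt{\mc{G}}^Y\to \wt{\wt{\mc{G}}^X}^Y$ is injective. Its kernel is contained in the torsion-free sheaf $\wt{\mc{G}}^Y$, so it suffices to show that the kernel is zero at the generic point $\eta$ of $Y$. At $\eta$, torsion vanishes, and the kernel of $(f\circ g)^*\mc{G}\to g^*\wt{\mc{G}}^X$, which is a quotient of $g^*\mc{T}$, also vanishes. Hence the map is an isomorphism there. A torsion-free sheaf with generically zero subsheaf has that subsheaf equal to zero, which gives the isomorphism.

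The only point needing care is the torsion property of $g^*\mc{T}$, which relies on birationality (dominance) of $g$. I would state this explicitly, since it is the one place where a hypothesis is used. Nothing else requires rationality or dimension two.
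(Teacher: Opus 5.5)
Your proof is correct and follows essentially the paper's route. Both arguments obtain a surjection $\wt{\mc{G}}^Y\to\wt{\wt{\mc{G}}^X}^Y$ from the surjection $g^*f^*\mc{G}\twoheadrightarrow g^*\wt{\mc{G}}^X$, whose kernel is torsion, and then show its kernel vanishes because it is a torsion subsheaf of the torsion-free sheaf $\wt{\mc{G}}^Y$. The paper gets the torsion-ness of that kernel from the snake lemma (its $K_2$ and $C_1$ are torsion), whereas you check vanishing at the generic point directly, which amounts to the same thing.
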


\begin{proof}
From the definitions, we obtain the following commutative diagram of short exact sequences:
\begin{equation*}
\xymatrix{
                     & C_1                                                         &                       0                    &    C_3                                                         & \\
0 \ar[r]           &  \tor g^*\widetilde{\mc{G}}^X  \ar[r]\ar[u] &g^*\widetilde{\mc{G}}^X\ar[r]\ar[u]&\widetilde{\widetilde{\mc{G}}^X}^Y \ar[r]\ar[u] &0 \\
0 \ar[r]          &  \tor g^*f^*\mc{G} \ar[r]\ar[u]_{\alpha_1}      & g^*f^*\mc{G}\ar[r]\ar[u]_{\alpha_2} & \widetilde{\mc{G}}^Y \ar[r]\ar[u]_{\alpha_3} &0 \\
                     & K_1       \ar[u]                                      & K_2 \ar[u]                      & K_3 \ar[u]                                           &\\
}
\end{equation*}
Here $C_i:=\Cok \alpha_i$ and $K_i:=\Ker \alpha_i$. 
By the snake lemma, we see that $C_3=0$, and obtain an exact sequence
\[
0\to K_1\to K_2\to K_3\to C_1\to0.
\]
Moreover, $K_2$ and $C_1$ are torsion sheaves. Thus, the exact sequence  implies that $K_3$ is a torsion sheaf.
On the other hand, $K_3$ is a subsheaf of the torsion-free sheaf $\wt{\mc{G}}^Y$.
Hence $K_3=0$, and consequently $\alpha_3$ is an isomorphism.
\end{proof}

For a globally generated sheaf $\mc{G}$ on $Y$,
there is a surjection $\mc{O}_Y^{\oplus r}\to \mc{G}$ for some integer $r>0$. Since $\mb{R}^1g_*\mc{O}_Y=0$ and 
the fibers of $g$ are at most $1$-dimensional, $\mb{R}^1g_*\mc{G}=0$. We will use this fact implicitly hereafter.

(Versions of) Lemmas \ref{lem:pullback} and \ref{lem:pushforward} can be found in \cite[2.1, 2.2]{MR0809966} and in the proof of \cite[Proposition 2.7]{MR3886189}.

\begin{lemma}\label{lem:pullback}
Let $\mathcal{F}$ be a sheaf on $X$. 
Then we have the following.
\begin{enumerate}
\item
If $\mc{F}$ is globally generated, so is
$\widetilde{\mathcal{F}}$.
\item
If $\mc{F}$ is reflexive, then
$g_*\widetilde{\mathcal{F}}\cong \mathcal{F}$.
\item
If $\mc{F}$ is a globally generated reflexive sheaf, then
$\widetilde{\mathcal{F}}$ is reflexive.
\end{enumerate} 
\end{lemma}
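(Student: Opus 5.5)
Part (1) follows from right exactness. If $\mc{O}_X^{\oplus r}\to \mc{F}$ is surjective, then pulling back gives a surjection $\mc{O}_Y^{\oplus r}=g^*\mc{O}_X^{\oplus r}\to g^*\mc{F}$. Composing with the quotient $g^*\mc{F}\to \wt{\mc{F}}$ yields a surjection $\mc{O}_Y^{\oplus r}\to\wt{\mc{F}}$. In the relative setting we first reduce to $X$ affine, so that global generation over $X$ is the only notion in play.

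For (2), we work locally on $X$ and may assume $X$ is affine. Let $U\subset X$ be the open set over which $g$ is an isomorphism. Its complement $Z$ is a finite set of closed points, since $g$ is birational projective between normal surfaces. Over $U$, $\wt{\mc{F}}$ agrees with $\mc{F}$ away from the torsion of $\mc{F}|_U$, which is zero because $\mc{F}$ is reflexive and hence torsion-free. Thus $g_*\wt{\mc{F}}$ and $\mc{F}$ agree on $U$. The adjunction map $\mc{F}\to g_*g^*\mc{F}\to g_*\wt{\mc{F}}$ is injective, since it is an isomorphism on $U$ and $\mc{F}$ is torsion-free. Moreover $g_*\wt{\mc{F}}$ is torsion-free, being the pushforward of a torsion-free sheaf under a birational map. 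Since $\mc{F}$ is reflexive on a normal surface, $\mc{F}=j_*(\mc{F}|_U)$ with $j\colon U\hookrightarrow X$ (codimension $2$ complement). The torsion-free sheaf $g_*\wt{\mc{F}}$ embeds into $j_*j^*g_*\wt{\mc{F}}=j_*(\mc{F}|_U)=\mc{F}$. This embedding is compatible with the map $\mc{F}\to g_*\wt{\mc{F}}$, so both maps are isomorphisms.

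Part (3) is the main point. Reflexive means Cohen--Macaulay on a normal surface, equivalently torsion-free with $\mathrm{depth}\ge 2$ at closed points, i.e.\ $\wt{\mc{F}}=\iota_*(\wt{\mc{F}}|_{V})$ for complements $V$ of finite sets. We set $\mc{G}:=\wt{\mc{F}}^{\vee\vee}$. There is an exact sequence
\[0\to\wt{\mc{F}}\to\mc{G}\to Q\to 0\]
with $Q$ supported on finitely many points, and we must show $Q=0$. By (2) applied to the reflexive sheaf $\mc{G}$ (or directly as above), $g_*\mc{G}$ is torsion-free and agrees with $\mc{F}$ on $U$. Hence $g_*\mc{G}\cong \mc{F}=g_*\wt{\mc{F}}$, giving $g_*\wt{\mc{F}}\cong g_*\mc{G}$. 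Pushing forward the sequence gives
\[0\to g_*\wt{\mc{F}}\to g_*\mc{G}\to g_*Q\to \mb{R}^1g_*\wt{\mc{F}}.\]
By (1), $\wt{\mc{F}}$ is globally generated, so $\mb{R}^1g_*\wt{\mc{F}}=0$ by the remark preceding the lemma, which uses $\mb{R}^1g_*\mc{O}_Y=0$ and fibers of dimension $\le1$. Since the first map is an isomorphism, $g_*Q=0$. But $Q$ has finite support, so $g_*Q=0$ forces $Q=0$, and therefore $\wt{\mc{F}}=\mc{G}$ is reflexive.

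The expected obstacle is the identification $g_*\mc{G}\cong\mc{F}$. This requires that the pushforward of a torsion-free sheaf on $Y$ is torsion-free and embeds in its reflexive hull on $X$, with everything compatible with the given maps. I would handle this uniformly by noting that for any torsion-free $\mc{H}$ on $Y$ agreeing with $g^*\mc{F}$ on $g^{-1}(U)$, we have $\mc{F}\subseteq g_*\mc{H}\subseteq j_*(\mc{F}|_U)=\mc{F}$ inside the generic fiber $\mc{F}\otimes K(X)$. This settles both (2) and the step in (3) at once.
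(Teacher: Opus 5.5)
Your proposal is correct and follows essentially the paper's argument: (ii) comes from the injection $\mc{F}\hookrightarrow g_*\wt{\mc{F}}$ with finitely supported cokernel together with reflexivity of $\mc{F}$, and (iii) comes from pushing forward $0\to\wt{\mc{F}}\to\wt{\mc{F}}^{\vee\vee}\to Q\to 0$, observing that the pushed-forward inclusion is an isomorphism, and using $\mb{R}^1g_*\wt{\mc{F}}=0$ (from (i)) to force $g_*Q=0$, hence $Q=0$. One small slip: your ``uniform'' claim $\mc{F}\subseteq g_*\mc{H}\subseteq\mc{F}$ needs $\wt{\mc{F}}\subseteq\mc{H}$, not merely agreement with $g^*\mc{F}$ over $g^{-1}(U)$ (for example, $\mc{H}=\mc{O}_Y(-E)$ with $\mc{F}=\mc{O}_X$ and $E$ an exceptional curve fails the first inclusion), but it does hold for the only two cases you use, $\mc{H}=\wt{\mc{F}}$ and $\mc{H}=\wt{\mc{F}}^{\vee\vee}$.
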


\begin{proof}
(i) is obvious. (ii) There is a natural injective map
$\mc{F}\hookrightarrow g_*\wt{\mc{F}}$.
Since its cokernel is supported in dimension at most $0$, and $\mc{F}$ is reflexive, it is an isomorphism.

(iii) By applying $g_*$ to the short exact sequence
$$
0\to \widetilde{\mc{F}}\stackrel{\alpha}\to\widetilde{\mc{F}}^{\vee\vee}\to \Cok \alpha\to 0,
$$ 
we have
$$
0\to g_*\widetilde{\mc{F}}\cong\mc{F}\stackrel{g_*\alpha}\to g_*(\widetilde{\mc{F}}^{\vee\vee})\to g_*(\Cok \alpha)\to \mb{R}^1g_*\widetilde{\mc{F}}.
$$
Since $\Cok \alpha$ and hence $g_*(\Cok \alpha)$ are supported in dimension at most $0$, and moreover $\mc{F}$ is reflexive, we note that $g_*\alpha$ is an isomorphism. 
Moreover $\mb{R}^1g_*\widetilde{\mc{F}}=0$ by (i), which implies  $g_*(\Cok \alpha)=0$. Therefore we have $\Cok \alpha=0$. 
This gives the conclusion.
\end{proof}

\begin{lemma}\label{lem:pushforward}
Let $\mc{G}$ be a reflexive sheaf on $Y$.
\begin{enumerate}
    \item 
If $\Ext^1_Y(\mc{G},\omega_Y)=0$, then $g_*\mc{G}$ is reflexive.
\item 
If $\mc{G}$ is $g$-globally generated, then $\wt{g_*\mc{G}}\cong \mc{G}$, and we have 
$$
\Ext^1_Y(\mc{G},\omega_Y)\cong \Ext^1_X(g_*\mc{G},\omega_X).
$$
\end{enumerate}
\end{lemma}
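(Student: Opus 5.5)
For (i), I will use Grothendieck duality for $g$ together with the vanishing of $\mb{R}^1 g_*$ on the relevant sheaves. Since $g$ is proper, $g^!\omega_X \cong \omega_Y$. Duality gives
\[
\RHom_Y(\mc{G},\omega_Y)\cong \RHom_X(\mb{R}g_*\mc{G},\omega_X).
\]
Sheafifying over $X$, we get
\[
\mb{R}g_*\mcRHom_Y(\mc{G},\omega_Y)\cong \mcRHom_X(\mb{R}g_*\mc{G},\omega_X).
\]
Since $\mc{G}$ is reflexive, $\mcExt^i_Y(\mc{G},\omega_Y)=0$ for $i>0$, so the left side is $\mb{R}g_*\mcHom_Y(\mc{G},\omega_Y)$.

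Reflexivity of $g_*\mc{G}$ is local on $X$. Working locally, $\Ext^1_Y(\mc{G},\omega_Y)=0$ gives $\mb{R}^1g_*\mcHom_Y(\mc{G},\omega_Y)=0$, by the local-to-global short exact sequence recalled above (applied over an affine base). Hence the left side is concentrated in degree $0$.

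On the right side there is the triangle
\[
g_*\mc{G}\to \mb{R}g_*\mc{G}\to \mb{R}^1g_*\mc{G}[-1].
\]
The sheaf $\mb{R}^1g_*\mc{G}$ is supported at finitely many points. Local duality on the $2$-dimensional Cohen--Macaulay scheme $X$ gives $\mcExt^i_X(\mc{T},\omega_X)=0$ for $i\neq 2$ when $\mc{T}$ has finite length.

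So the long exact sequence of $\mcExt^\bullet_X(-,\omega_X)$ reads, in degrees $1,2$:
\[
0\to\mcExt^1_X(g_*\mc{G},\omega_X)\to \mcExt^2_X(\mb{R}^1g_*\mc{G},\omega_X)\to \mcExt^2(\mb{R}g_*\mc{G},\omega_X)=0,
\]
together with $\mcExt^2_X(g_*\mc{G},\omega_X)\cong \mcExt^3(\ldots)=0$. This shows $\mcExt^2_X(g_*\mc{G},\omega_X)=0$. Since $g_*\mc{G}$ is torsion free, this is the vanishing of local cohomology $H^0_x$ and $H^1_x$ at closed points, i.e.\ depth $2$. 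Hence $g_*\mc{G}$ is Cohen--Macaulay, i.e.\ reflexive.

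Alternatively and more simply, I will take the reflexive hull $(g_*\mc{G})^{\vee\vee}$ and its cokernel $Q$, which has finite length. Then $\Hom(Q,-)$ is Matlis dual to $\Ext^2(\ldots)$, and the required vanishing is the above. I expect this step, tracking exactly which Ext vanishes, to be the main obstacle.

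For (ii), $\mc{G}$ is $g$-globally generated, so $g^*g_*\mc{G}\to\mc{G}$ is surjective. It factors through $\wt{g_*\mc{G}}$ because $\mc{G}$ is torsion free. The kernel of $\wt{g_*\mc{G}}\to\mc{G}$ is a subsheaf of a torsion-free sheaf, and it is generically zero since $g$ is birational. So it vanishes, giving $\wt{g_*\mc{G}}\cong\mc{G}$.

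Now $\mb{R}^1g_*\mc{G}=0$, since $\mc{G}$ is a quotient of some $\mc{O}_Y^{\oplus r}$ locally over $X$ and $\mb{R}^2g_*=0$. Hence $\mb{R}g_*\mc{G}\cong g_*\mc{G}$. Taking global sections of the duality isomorphism then gives
\[
\Ext^1_Y(\mc{G},\omega_Y)\cong\Ext^1_X(g_*\mc{G},\omega_X).
\]
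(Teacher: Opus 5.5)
Your part (ii) is correct and is the paper's argument. Part (i), however, contains an error at the one step that carries the content.

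\textbf{The error.} When you apply $\mcRHom_X(-,\omega_X)$ to the triangle $g_*\mc{G}\to\mb{R}g_*\mc{G}\to\mb{R}^1g_*\mc{G}[-1]$, you dropped the shift. In fact $\mcExt^i_X(\mb{R}^1g_*\mc{G}[-1],\omega_X)=\mcExt^{i+1}_X(\mb{R}^1g_*\mc{G},\omega_X)$, and this is nonzero only for $i=1$.

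Taken literally, your displayed sequence asserts $\mcExt^1_X(g_*\mc{G},\omega_X)\cong\mcExt^2_X(\mb{R}^1g_*\mc{G},\omega_X)$, the Matlis dual of $\mb{R}^1g_*\mc{G}$. That group can be nonzero under the hypothesis of (i); for instance $\mc{L}^{\otimes -2}$ in the paper's $\frac{1}{n}(1,1)$ example has $\mb{R}^1 f_*\neq 0$. So the sequence would contradict the very statement you are proving.

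You then conclude from $\mcExt^2_X(g_*\mc{G},\omega_X)=0$, but this does not give depth $2$. By local duality, the stalk of $\mcExt^2_X(M,\omega_X)$ at a closed point $x$ is dual to $H^0_x(M)$ alone. It therefore vanishes for every torsion-free sheaf and carries no information about depth. The condition $H^1_x(g_*\mc{G})=0$ corresponds instead to $\mcExt^1_X(g_*\mc{G},\omega_X)_x=0$.

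Your reflexive-hull alternative confirms this rather than bypassing it. From $0\to g_*\mc{G}\to(g_*\mc{G})^{\vee\vee}\to Q\to 0$ one gets $\mcExt^1_X(g_*\mc{G},\omega_X)\cong\mcExt^2_X(Q,\omega_X)$, so $Q=0$ exactly when $\mcExt^1$ vanishes.

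\textbf{The repair.} Redo the indexing. The long exact sequence gives
\[
0=\mcExt^1_X(\mb{R}g_*\mc{G},\omega_X)\to\mcExt^1_X(g_*\mc{G},\omega_X)\to\mcExt^3_X(\mb{R}^1g_*\mc{G},\omega_X)=0 .
\]
The Matlis dual $\mcExt^2_X(\mb{R}^1g_*\mc{G},\omega_X)$ appears instead in degree $0$, as the cokernel of $\mcExt^0_X(\mb{R}g_*\mc{G},\omega_X)\to\mcHom_X(g_*\mc{G},\omega_X)$. Hence $\mcExt^1_X(g_*\mc{G},\omega_X)=0$, and local duality gives $H^1_x(g_*\mc{G})=0$, i.e.\ depth $2$, so $g_*\mc{G}$ is reflexive.

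\textbf{Comparison with the paper.} Once corrected, your argument is the Matlis-dual form of the paper's proof, written with sheaf $\mcExt$'s. The paper works on the local-cohomology side. It dualizes $\mb{R}\Gamma_{\mk{m}}(\mb{R}g_*\mc{G})$ to $\RHom_Y(\mc{G},\omega_Y[2])$, which gives $H^1_{\mk{m}}(\mb{R}g_*\mc{G})=0$. It then uses that $H^1_{\mk{m}}(g_*\mc{G})$ injects into $H^1_{\mk{m}}(\mb{R}g_*\mc{G})$, which follows from the same triangle. The paper's route needs neither sheafified duality nor the auxiliary vanishing of $\mb{R}^1g_*\mcHom_Y(\mc{G},\omega_Y)$, and it sidesteps exactly the degree bookkeeping that went wrong here.
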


\begin{proof}
(i)
We may assume that $X$ is the spectrum of a $2$-dimensional complete local normal domain $(R,\mk{m})$.
We have an exact sequence
\[
0=H^0_{\mk{m}}(g_*\mc{G})\to g_*\mc{G}\to i_*i^*(g_*\mc{G})\to H^1_{\mk{m}}(g_*\mc{G}),
\]
where $i\colon \Spec R\setminus \{\mk{m}\} \hookrightarrow \Spec R$.
Local duality implies
\[\mb{R}\Gamma_\mk{m}(\mb{R}g_*\mc{G})\cong \Hom_R(\RHom_R(\mb{R}g_*\mc{G},\omega_R[2]), E(R/\mk{m})),\]
where $E(R/\mk{m})$ is the injective hull of $R/\mk{m}$.
Since
\[\RHom_R(\mb{R}g_*\mc{G},\omega_R[2])\cong  
\RHom_Y(\mc{G},g^!\omega_R[2])
\cong  
\RHom_Y(\mc{G},\omega_Y[2]),
\]
if $\Ext_Y^1(\mc{G},\omega_Y)$ vanishes, then $H^1_{\mk{m}}(g_*\mc{G})=0$, which implies that  $g_*\mc{G}$ is reflexive.

(ii) There exists a surjective and generically injective map $g^*g_*\mc{G}\to\mc{G}$, which yields  $\wt{g_*\mc{G}}\cong \mc{G}$.
Moreover, recall that $\mb{R}g_*\mc{G}\simeq g_*\mc{G}$. By Grothendieck duality (i.e., the adjunction $\mb{R}g_* \dashv g^!$) and the fact that $g^!\omega_X \simeq \omega_Y$, we obtain the quasi-isomorphism:
\[\mb{R}\Hom_Y(\mc{G},\omega_Y) \simeq \mb{R}\Hom_X(\mb{R}g_*\mc{G},\omega_X) \simeq \mb{R}\Hom_X(g_*\mc{G},\omega_X).\]
Taking the first cohomology yields the second isomorphism $\Ext^1_Y(\mc{G},\omega_Y)\cong \Ext^1_X(g_*\mc{G},\omega_X)$.

\end{proof}

\begin{lemma}\label{lem:omega}
Let $f\colon X\to \Spec R$ be a minimal partial resolution of a rational surface singularity $\Spec R$. Then $\omega_X\cong \wt{\omega_R}$.  
\end{lemma}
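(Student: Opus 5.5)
The plan is to compare $\omega_X$ with $f^*\omega_R$ through the natural evaluation map $f^*f_*\omega_X\to\omega_X$. The proof then splits into a pushforward computation, which is formal, and a global generation statement, which is where the minimality of $f$ enters.

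\textbf{Step 1: $f_*\omega_X\cong\omega_R$.} By Remark \ref{rem:rational_surface_char_p} we have $\mb{R}f_*\mc{O}_X\cong\mc{O}_R$. Grothendieck duality with $f^!\omega_R\cong\omega_X$ then gives
\[
\mb{R}f_*\omega_X\cong\mb{R}f_*\mcRHom_X(\mc{O}_X,f^!\omega_R)\cong\RHom_R(\mb{R}f_*\mc{O}_X,\omega_R)\cong\omega_R .
\]
Hence $f_*\omega_X\cong\omega_R$ (and $\mb{R}^1f_*\omega_X=0$). This is the same argument as in Lemma \ref{lem:pushforward}(ii).

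\textbf{Step 2: reduction to global generation.} Suppose $\omega_X$ is $f$-globally generated. Then the evaluation map $f^*\omega_R\cong f^*f_*\omega_X\to\omega_X$ is surjective. It is an isomorphism over the locus where $f$ is an isomorphism, so its kernel is a torsion sheaf. Since $\omega_X$ is torsion free, this kernel is exactly $\tor f^*\omega_R$, and therefore $\wt{\omega_R}=f^*\omega_R/\tor f^*\omega_R\cong\omega_X$. So everything reduces to showing that $\omega_X$ is $f$-globally generated.

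\textbf{Step 3: global generation of $\omega_X$ (the main obstacle).} The only available input is the hypothesis that $\omega_X$ is $f$-nef. Rational singularities are $\mb{Q}$-factorial, so this makes sense intersection-theoretically. To turn nefness into global generation I would pass to the minimal resolution $g\colon Y\to X$ and set $h=f\circ g$.

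Write $g^*K_X=K_Y+\Delta$ with $\Delta$ an effective $g$-exceptional $\mb{Q}$-divisor; effectivity holds because $g$ is minimal. The first thing to try is the integral divisor $D:=K_Y+\lceil\Delta\rceil$, using Lipman's theorem that on a resolution of a rational surface singularity every line bundle of nonnegative degree on all exceptional curves is globally generated. This requires two checks.
\begin{itemize}
\item[(a)] $D\cdot E\ge 0$ for every $h$-exceptional curve $E$. For $E$ not contracted by $g$, this follows from $g^*K_X\cdot E=K_X\cdot g_*E\ge 0$ together with $(\lceil\Delta\rceil-\Delta)\cdot E\ge0$. For $g$-exceptional $E$, one needs the standard computation on the minimal resolution of a rational singularity showing that $K_Y+\lceil\Delta\rceil$ is $g$-nef. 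I expect this to be the delicate point, and if the round-up fails I would replace it by the smallest effective $g$-exceptional $C$ with $K_Y+C$ being $g$-nef.
\item[(b)] $g_*\mc{O}_Y(D)\cong\omega_X$. This holds because $\lceil\Delta\rceil$ is $g$-exceptional and $\omega_X$ is the reflexive sheaf $\mc{O}_X(K_X)$.
\end{itemize}
Given these, $\mc{O}_Y(D)$ is $h$-globally generated, and hence so is its pushforward $g_*\mc{O}_Y(D)\cong\omega_X$. This gives the $f$-global generation needed in Step 2.

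As a cross-check, Lemma \ref{lem:pushforward}(ii) and Lemma \ref{lem:XY} then identify $\mc{O}_Y(D)\cong\wt{\omega_R}^Y$, which is consistent with the conclusion.
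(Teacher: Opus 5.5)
Your Steps 1 and 2 are fine. Step 3, however, breaks exactly at the point you flagged, and the problem is not specific to the round-up. For a $g$-exceptional $C$ write $K_Y+C=g^*K_X-(\Delta-C)$.

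\begin{itemize}
\item If $K_Y+C$ is $g$-nef, then $-(\Delta-C)$ is $g$-nef, and the negativity lemma gives $C\le\Delta$. So $K_Y+\lceil\Delta\rceil$ is $g$-nef only when $\Delta$ is integral. For instance, for an isolated $g$-exceptional $(-n)$-curve $F$ with $n\ge 3$ one has $\Delta=\frac{n-2}{n}F$ and $(K_Y+F)\cdot F=-2$.
\item Your fallback, the smallest effective $C$ with $K_Y+C$ $g$-nef, is $C=0$. For a curve $E$ not contracted by $g$ this only gives $K_Y\cdot E=K_X\cdot g_*E-\Delta\cdot E$, where the correction term now has the wrong sign.
\end{itemize}

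Under the sole hypothesis that $\omega_X$ is $f$-nef, no choice of $C$ can work. Let $\Spec R$ be the rational singularity whose minimal resolution is a chain of two $(-3)$-curves (e.g.\ $\frac18(1,3)$). Blow up their intersection point to get $Y$, with a chain $F_1,E,F_2$ of self-intersections $-4,-1,-4$, and let $g\colon Y\to X$ contract $F_1$ and $F_2$.
\begin{itemize}
\item $g$ is the minimal resolution of $X$, $g^*K_X=K_Y+\tfrac12(F_1+F_2)$, and $K_X\cdot g_*E=0$. So $f$ is minimal in the sense of the Convention.
\item $K_Y+aF_1+bF_2$ has degrees $2-4a$, $a+b-1$, $2-4b$ on $F_1,E,F_2$. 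These are never all nonnegative for integers $a,b$.
\item If $\omega_X$ were $f$-globally generated, then by Lemma \ref{lem:pullback} the sheaf $\wt{\omega_X}^Y$ would be a globally generated line bundle agreeing with $\omega_Y$ away from $F_1\cup F_2$, hence of exactly this form. That is impossible.
\end{itemize}
So $\omega_X$ is not $f$-globally generated. Since $\wt{\omega_R}^X$ is $f$-globally generated, $\omega_X\not\cong\wt{\omega_R}^X$ in this example.

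The missing input is that $f\circ g$ is the minimal resolution of $\Spec R$. The paper's proof uses exactly this (``since $f\circ g$ is minimal''). It holds in the standing setup of Section \ref{sec:Wunram-type}, where the minimal resolution of $\Spec R$ factors through $X$, and that is the only place the lemma is applied. With this input, $\omega_Y$ itself is $(f\circ g)$-nef, so $C=0$ works:
\begin{itemize}
\item Lipman's theorem makes $\omega_Y$ globally generated.
\item Lemma \ref{lem:pushforward}(ii) gives $\omega_Y\cong\wt{\omega_R}^Y$.
\item Lemmas \ref{lem:XY} and \ref{lem:pullback}(ii) then give $\omega_X\cong g_*\omega_Y\cong g_*\wt{\wt{\omega_R}^X}^Y\cong\wt{\omega_R}^X$.
\end{itemize}
This route also makes your Step 2 reduction, and the claim that the pushforward of an $(f\circ g)$-globally generated sheaf is $f$-globally generated, unnecessary.
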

\begin{proof}
Take a minimal resolution $g\colon Y\to X$. Then, since $f\circ g$ is minimal, $\omega_Y$ is an $f\circ g$-nef invertible sheaf. Thus, it is $f\circ g$-globally generated by \cite[Theorem 12.1]{MR276239}. Then, since $(f\circ g)_*\omega_Y\cong \omega_R$, we have $\omega_Y \cong \wt{\omega_R}^Y$ by Lemma \ref{lem:pushforward}(ii). Moreover, $\wt{\omega_R}^X$ is reflexive by Lemma \ref{lem:pullback}(iii).   
Therefore, we obtain
\[
\omega_X\cong g_*\omega_Y\cong g_*\wt{\omega_R}^Y \cong g_*\wt{\wt{\omega_R}^X}^Y\cong \wt{\omega_R}^X
\]
by Lemmas \ref{lem:XY} and \ref{lem:pullback}(ii).
\end{proof}


\section{Wunram-type results for minimal partial resolutions}\label{sec:Wunram-type}
Let $(R,\mk{m})$ be a $2$-dimensional complete local normal domain with a rational singularity over $k$.
Let
\[
Y \xrightarrow{g} X \xrightarrow{f} \Spec R
\]
be the minimal resolution of $\Spec R$, which factors through a partial resolution $f$. In this situation, $f$ is also minimal.

The set of $(-2)$-curves among $\{E_i\}_{i\in I}$ forms a disjoint union of $\mathrm{ADE}$ configurations. When $X$ has at worst rational double points, the partial resolution $f$ factors through $X^c$, which is minimal among partial resolutions of $\Spec R$ with only rational double points:
\[ X\to X^c \xrightarrow{f^c} \Spec R. \]
$X^c$ is called the \emph{canonical model} of $f$.

\subsection{Generalizations of special Cohen--Macaulay modules to sheaves on minimal partial resolutions}
We apply the results in the previous section.

Denote by
\[
\{E_i\}_{i\in I}
\]
the set of irreducible exceptional divisors of the minimal resolution
$f\circ g$.

Among these exceptional divisors, some are contracted by $g$ and some are not.
We define
\[
I_g:=\{i\in I \mid E_i \text{ is exceptional for } g\},
\]
and
\[
I_f:=I\setminus I_g.
\]

For each $i\in I_f$, the image $g(E_i)$ is an irreducible exceptional divisor of $f$.
Identifying $E_i$ with its image on $X$, we may regard
\begin{equation}\label{eqn:If}
\{E_i\}_{i\in I_f}
\end{equation}
as the set of irreducible exceptional divisors of $f$.

First, we recall the definition of special Cohen--Macaulay modules.

\begin{definition}
A Cohen--Macaulay $R$-module $M$ is said to be \textit{special} if $\Ext^1_R(M,R)=0$. We denote the category of special Cohen--Macaulay $R$-modules by $\SCM(R)$. 
\end{definition}

We generalize the notions of Cohen--Macaulay and special Cohen--Macaulay modules to globally generated reflexive sheaves on a minimal partial resolution $X$ as follows:
\begin{align*}
\CM(X) &:= \left\{\mc{F} \;\middle|\; 
  \begin{aligned}
    &\text{$\mc{F}$ is a globally generated reflexive sheaf on $X$,}\\ 
    &\Ext_X^1(\mc{F},\omega_X)=0
  \end{aligned}
\right\},\\
\SCM(X) &:= \left\{\mc{F} \;\middle|\; 
  \begin{aligned}
    &\text{$\mc{F}$ is a globally generated reflexive sheaf on $X$,}\\ 
    &\Ext_X^1(\mc{F},\mc{O}_X)=0
  \end{aligned}
\right\},\\
\mk{V}(X) &:= \left\{\mc{F} \;\middle|\; 
  \begin{aligned}
    &\text{$\mc{F}$ is a globally generated locally free sheaf on $X$,}\\ 
    &\Ext_X^1(\mc{F},\mc{O}_X)=0
  \end{aligned}
\right\}.
\end{align*}

\begin{remark}\label{rem:CMSCM}
\begin{enumerate}
 \item
        Since locally free sheaves are reflexive, we have $\mk{V}(X)\subset \SCM (X)$.
        Moreover, since $\omega_X$ is globally generated by Lemma \ref{lem:omega}, 
        we have $\SCM (X)\subset \CM (X)$ by Lemma \ref{lem:surjection}:
        \[\mk{V}(X)\subset \SCM (X)\subset \CM (X).\]
\item 
Reflexive sheaves are locally free on the smooth surface $Y$. Hence,  $\mk{V}(Y)=\SCM (Y)$.  
\item
Note that $\Coh (X)$ is a Krull--Schmidt abelian category because $X$ is projective over $\Spec R$ for a complete local ring $R$.
Furthermore, $\mk{V}(X)$, $\SCM (X)$, and $\CM (X)$ are subcategories of $\Coh (X)$ that are closed under extensions and direct summands; hence, they are Krull--Schmidt exact categories.
\item 
        Recall that a module over a $2$-dimensional normal local domain is reflexive if and only if it is Cohen--Macaulay. Moreover, since any reflexive $R$-module $M$ satisfies $\Ext_R^1(M,\omega_R)=0$, we have $\CM(R)=\CM(\Spec R)$ and $\SCM(R)=\SCM(\Spec R)$. 
        \item 
        While any reflexive $R$-module $M$ satisfies $\Ext_R^1(M,\omega_R)=0$, the analogous statement $\Ext_X^1(\mc{F},\omega_X)=0$ does not necessarily hold for a globally generated reflexive sheaf $\mc{F}$ on $X$. Indeed, a sufficiently ample invertible sheaf on $X$ provides a counterexample.
    \end{enumerate}
\end{remark}

\begin{lemma}\label{lem:vanishing_extension}
Let $h\colon Z\to W$ be a partial resolution of a rational surface singularity $W$, and define a subcategory of $\Coh(Z)$ by
\[
\msr{C}:=\{\mc{F}\in \Coh (Z) \mid \mb{R}h_* \mc{F}=0\}.
\]
Then the following hold:
\begin{enumerate}
    \item $\msr{C}=\Span{\mc{O}_{E_i}(-1) \mid \text{ $E_i$ is an irreducible exceptional divisor of $h$ }}_{\text{ext}}$.
    \item Assume furthermore that $h$ is a minimal resolution. Then for any $\mc{F} \in \msr{C}$, we have $\Ext^1_Z(\mc{F},\mc{O}_Z)=0$.
\end{enumerate}
\end{lemma}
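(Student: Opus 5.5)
For (i) I would argue both inclusions separately. The inclusion $\supseteq$ is straightforward. Each $E_i\cong\mb{P}^1$, since exceptional curves of a partial resolution of a rational singularity are smooth rational curves (pass to a resolution and use Remark \ref{rem:rational_surface_char_p}). Hence $\mb{R}h_*\mc{O}_{E_i}(-1)=\mb{R}\Gamma(\mb{P}^1,\mc{O}(-1))=0$. The class $\msr{C}$ is closed under extensions by the long exact sequence of $\mb{R}h_*$, so the extension closure lies in $\msr{C}$.

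For $\subseteq$, take $\mc{F}\in\msr{C}$.
\begin{enumerate}
\item $\mc{F}$ is supported on the exceptional locus. Away from the exceptional locus $h$ is an isomorphism, and $h_*\mc{F}=0$ there.
\item $\mc{F}$ has no $0$-dimensional subsheaf. If $\mc{T}\subset\mc{F}$ is such a subsheaf, then $h_*\mc{T}\hookrightarrow h_*\mc{F}=0$, which is impossible unless $\mc{T}=0$. So $\mc{F}$ is pure of dimension one, supported on $\bigcup E_i$.
\item Induct on the length of $\mc{F}$ along the generic points of the $E_i$, i.e.\ on its multiplicities. The goal is a surjection $\mc{F}\to\mc{O}_{E_i}(-1)$ for some $i$ whose kernel again lies in $\msr{C}$.
\end{enumerate}

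Step (iii) is the main obstacle. For a pure sheaf $\mc{F}$, the line bundle of negative self-intersection gives a test. Take an effective exceptional cycle $Z_0$ and compare $\chi(\mc{F})=0$ with $H^0=H^1=0$. I would try the standard trick: pick $E_i$ in the support and the maximal quotient $\mc{F}\twoheadrightarrow\mc{L}$, where $\mc{L}$ is a pure sheaf on $E_i$, hence a line bundle plus the torsion-free part on $\mb{P}^1$, so $\mc{L}=\bigoplus\mc{O}(a_j)$. Since $\mb{R}^2=0$, we have $h_*\mc{L}$ coming from $\mb{R}^1h_*$ of the kernel $\mc{K}$. I need $H^0(\mc{L})=0$, i.e.\ all $a_j\le-1$.

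To arrange this, instead choose the quotient so that degrees are minimal. Write $\mc{F}|_{E_i}/\tor\cong\bigoplus\mc{O}(a_j)$ and quotient onto a summand $\mc{O}(a)$ of minimal degree. If $a\ge0$ for every choice of $i$, I would use the rationality of $W$ to get a contradiction with $H^1(\mc{F})=0$ and $H^0(\mc{F})=0$. The argument is a filtration by $E_i$-restrictions together with negative definiteness of the intersection matrix, which gives an $E_i$ with $\deg$ small, in the style of Artin's computation of the fundamental cycle.

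Once a quotient $\mc{F}\to\mc{O}_{E_i}(a)$ with $a\le-1$ exists, the kernel satisfies $H^0(\mc{K})=0$ and $H^1(\mc{K})\to H^1(\mc{O}(a))$ is surjective. If $a\le-2$, then $H^1(\mc{O}(a))\ne0$ must equal the image of $H^0$-level obstruction... which forces $a=-1$ exactly, because $H^0(\mc{O}(a))\to H^1(\mc{K})\to0$ and $H^1$ of $\mc{F}$ vanishes. More carefully: from $0\to\mc{K}\to\mc{F}\to\mc{O}(a)\to0$ we get $H^1(\mc{O}(a))=0$, so $a=-1$, and then $\mc{K}\in\msr{C}$. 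Induction finishes (i).

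For (ii), by (i) and the long exact sequence for $\Ext(-,\mc{O}_Z)$ it suffices to treat $\mc{F}=\mc{O}_{E_i}(-1)$ with $Z$ smooth and $h$ minimal. By Serre duality relative to $h$ (Grothendieck duality, as in Lemma \ref{lem:pushforward}), $\Ext^1_Z(\mc{O}_{E_i}(-1),\mc{O}_Z)\cong \mcExt^1(\mc{O}_{E_i}(-1),\mc{O}_Z)$ globally, and this equals $\mc{O}_{E_i}(-1)\otimes\mc{O}_Z(E_i)|_{E_i}=\mc{O}_{E_i}(-1+E_i^2)$. Its $H^0$ vanishes since $E_i^2<0$. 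There is also the $H^1(\mcHom)=0$ term, since $\mcHom(\mc{O}_{E_i}(-1),\mc{O}_Z)=0$. So the group vanishes. Note that here minimality is not even needed beyond smoothness of $Z$; the hypothesis only ensures that $Z$ is smooth, so that $\mcExt^1$ is computed as above.
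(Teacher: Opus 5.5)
\textbf{There is a genuine error in (ii), and (i) leaves its key step unproved.}

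\textbf{Part (ii).} You have the twist on the wrong side. The sheaf $\mcExt^1$ is contravariant in its first argument. Dualizing $0\to L(-E)\to L\to\mathcal{O}_E\otimes L\to 0$ gives
$\mcExt^1_Z(\mathcal{O}_E\otimes L,\mathcal{O}_Z)\cong L^{-1}(E)|_E$.
For $\deg L|_E=-1$ this is $\mathcal{O}_E(1+E^2)$, not $\mathcal{O}_E(-1+E^2)$. This agrees with the paper's $i_*\mathcal{O}_E(1-n)$, obtained from $i^!\mathcal{O}_Z\cong\mathcal{O}_E(-n)[-1]$.

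Its $H^0$ vanishes exactly when $E^2\le -2$, and this is where minimality is used: $\omega_Z$ is $h$-nef, so $0\le\omega_Z\cdot E=-2-E^2$ by adjunction. Your claim that minimality is not needed is therefore false. Blow up $\Spec k[[x,y]]$ at its closed point and let $E$ be the $(-1)$-curve. Then $\mathcal{O}_E(-1)\cong\mathcal{O}_E(E)\in\msr{C}$. The sequence $0\to\mathcal{O}_Z\to\mathcal{O}_Z(E)\to\mathcal{O}_E(-1)\to 0$ does not split, and indeed $\Ext^1_Z(\mathcal{O}_E(-1),\mathcal{O}_Z)\cong H^0(\mathcal{O}_E)=k\neq 0$.

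Once you correct the twist and invoke $E^2\le-2$, your argument is the paper's. Reduce to $\mathcal{O}_{E_i}(-1)$ by half-exactness, use $\mcHom=0$, and take $H^0$ of $\mcExt^1$.

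\textbf{Part (i).} Your outline matches the paper's sketch: $\mathcal{F}$ is pure of dimension one, you peel off a quotient $\mathcal{O}_{E_i}(-1)$ whose kernel stays in $\msr{C}$, and you induct on generic lengths. Your bookkeeping is correct. Any pure quotient $\mathcal{O}_{E_i}(a)$ has $a\ge -1$, because $H^1(\mathcal{F})\to H^1(\mathcal{O}_{E_i}(a))$ is onto (you wrote $H^1(\mathcal{K})$ at one point). For $a=-1$ the kernel lies in $\msr{C}$.

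What is missing is the decisive step: that \emph{some} $E_i$ admits such a quotient with $a\le -1$. You only gesture at negative definiteness ``in the style of Artin,'' and the step is not automatic. On an $A_2$ chain, the non-split extension of $\mathcal{O}_{E_1}(-1)$ by $\mathcal{O}_{E_2}(-1)$ restricts modulo torsion to $\mathcal{O}_{E_2}$ on $E_2$. So the component must be chosen with care. The paper's sketch is equally terse here.

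One way to close the gap is Van den Bergh's finite-length argument:
\begin{itemize}
\item Take the tilting bundle $\mathcal{P}=\mathcal{O}_Z\oplus\bigoplus_i\mathcal{M}_i$. Then $\msr{C}[1]$ is the Serre subcategory of ${}^{-1}\Per(Z/W)\simeq\mod\,\End(\mathcal{P})$ killed by the idempotent of $\mathcal{O}_Z$, so its objects have finite length.
\item Each $\mathcal{O}_{E_i}(-1)[1]$ is simple, since $\Hom(\mathcal{P},\mathcal{O}_{E_i}(-1)[1])=\bigoplus_j H^1(\mathcal{M}_j^\vee|_{E_i}(-1))$ is one-dimensional.
\item Counting simples shows these are all the simples in $\msr{C}[1]$.
\item Short exact sequences in the heart between objects of $\msr{C}[1]$ are shifted short exact sequences of sheaves, which gives (i).
\end{itemize}
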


\begin{proof}
Recall that every irreducible exceptional divisor $E$ of $h$ is a projective line $\PP^1$ (cf. \cite[Lemma 3.4.1]{MR2057015}).

(i) Although this is well-known, we provide a sketch of the proof for the reader's convenience. First, note that any $\mc{F}\in \msr{C}$ is a pure $1$-dimensional sheaf supported on the exceptional divisors. Then there is a surjective map from $\mc{F}$ to a locally free sheaf on some $E_i$. Since $\mc{F}\in \msr{C}$, this yields a surjective map from $\mc{F}$ to $\mc{O}_{E_i}(-1)$. The kernel of this map also belongs to $\msr{C}$, so the result follows by induction on $\sum_i \length _{\eta_i}\mc{F}$, where $\eta_i$ is the generic point of $E_i$.

(ii) By (i), it suffices to show $\Ext^1_Z(\mc{O}_{E}(-1),\mc{O}_Z)=0$ for any irreducible exceptional divisor $E$. Suppose that $E^2=-n$. Since $\omega_Z$ is $h$-nef, we have $n\ge 2$. For a closed immersion $i\colon E\hookrightarrow Z$, we have
\[i^!\mc{O}_Z=\omega_{E/Z}[-1]\cong \mc{O}_E(-n)[-1]\]
by \cite[Corollary 7.3]{MR222093}. Then by Grothendieck duality,
\begin{align*}
\mcRHom_Z(i_*\mc{O}_{E}(-1),\mc{O}_Z)&\cong i_*\mcRHom_E(\mc{O}_E(-1),i^!\mc{O}_Z)\\
&\cong i_*\mc{O}_E(-n+1)[-1].
\end{align*}
Taking the first cohomology, we obtain $\mcExt^1_Z(\mc{O}_{E}(-1),\mc{O}_Z)\cong i_*\mc{O}_E(-n+1)$. Since $\mcHom_Z(\mc{O}_E(-1), \mc{O}_Z) = 0$, the local-to-global spectral sequence implies
\[\Ext^1_Z(\mc{O}_{E}(-1),\mc{O}_Z) \cong H^0(Z, i_*\mc{O}_E(-n+1)) \cong H^0(E,\mc{O}_E(-n+1)). \]
Since $n \ge 2$, this group vanishes, which concludes the proof.
\end{proof}

 The correspondence between $\CM (X)$ and $\CM (R)$ is shown in \cite[Proposition 2.7]{MR3886189}. We show that the correspondence restricts to one between $\SCM (X)$ and $\SCM (R)$.
 
\begin{lemma}\label{lem:special_partial}
There is an additive equivalence
\[
\begin{matrix}
\CM (X) & \longleftrightarrow & \CM(R) \\[1ex]
\mc{F} & \longmapsto & f_*\mc{F} \\
\wt{M} & \longmapsfrom & M,
\end{matrix}
\]
which restricts to an additive equivalence between $\SCM(X)$ and $\SCM (R)$. 
\end{lemma}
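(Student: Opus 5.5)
The plan is to first establish the equivalence $\CM(X)\simeq\CM(R)$ and then check that it matches the two ``special'' conditions.

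\textbf{The equivalence $\CM(X)\simeq\CM(R)$.} This is essentially \cite[Proposition 2.7]{MR3886189}, reproved from Lemmas \ref{lem:pullback} and \ref{lem:pushforward} applied to $f\colon X\to\Spec R$.

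For $\mc{F}\in\CM(X)$, the sheaf $\mc{F}$ is reflexive, globally generated and satisfies $\Ext^1_X(\mc{F},\omega_X)=0$. Lemma \ref{lem:pushforward}(i) then gives that $f_*\mc{F}$ is reflexive, i.e.\ Cohen--Macaulay. Since $\mc{F}$ is globally generated, Lemma \ref{lem:pushforward}(ii) gives $\wt{f_*\mc{F}}\cong\mc{F}$.

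Conversely, take $M\in\CM(R)$. Lemma \ref{lem:pullback} shows that $\wt{M}$ is globally generated and reflexive, with $f_*\wt{M}\cong M$. Lemma \ref{lem:pushforward}(ii) then yields
\[
\Ext^1_X(\wt M,\omega_X)\cong\Ext^1_R(M,\omega_R)=0,
\]
so $\wt M\in\CM(X)$.

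Full faithfulness follows because, for reflexive sheaves on a normal surface, $\Hom_X(\mc{F},\mc{G})\to\Hom_R(f_*\mc{F},f_*\mc{G})$ is bijective. Indeed, both sides are reflexive modules of homomorphisms agreeing away from the closed point, or one can argue directly via $\wt{(-)}$ being a functor inverse to $f_*$ on objects. Additivity is clear.

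\textbf{Matching the special conditions.} The remaining and main point is to show that, for $\mc{F}\in\CM(X)$,
\[
\Ext^1_X(\mc{F},\mc{O}_X)=0 \iff \Ext^1_R(f_*\mc{F},R)=0.
\]
Here Lemma \ref{lem:pushforward}(ii) is not directly applicable, since it concerns $\omega$ rather than $\mc{O}$. I would use Grothendieck duality in the form $\RHom_X(\mc{F},f^!R)\cong\RHom_R(\mb{R}f_*\mc{F},R)$, together with $\mb{R}f_*\mc{F}=f_*\mc{F}$ (by global generation). This requires understanding $f^!R$, which is not $\mc{O}_X$ in general.

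A cleaner route is to pass to the minimal resolution $Y$. Put $\mc{G}:=\wt{\mc{F}}^Y$. By Lemmas \ref{lem:XY} and \ref{lem:pullback}, $\mc{G}$ is a globally generated locally free sheaf with $g_*\mc{G}=\mc{F}$ and $(f\circ g)_*\mc{G}=M$. Then:
\begin{itemize}
\item[(a)] $\Ext^1_Y(\mc{G},\mc{O}_Y)\cong\Ext^1_R(M,R)$ (Wunram's criterion);
\item[(b)] $\Ext^1_Y(\mc{G},\mc{O}_Y)\cong\Ext^1_X(\mc{F},\mc{O}_X)$.
\end{itemize}

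For both, write $\Ext^1_Y(\mc{G},\mc{O}_Y)=H^1(Y,\mc{G}^\vee)$, using local freeness of $\mc{G}$.

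For (b): since $g_*\mc{G}^\vee=\mc{F}^\vee$ (both are reflexive and agree off points) and $\mb{R}^1g_*\mc{G}^\vee$ is supported at points, the Leray spectral sequence gives
\[
0\to H^1(X,\mc{F}^\vee)\to H^1(Y,\mc{G}^\vee)\to H^0(\mb{R}^1g_*\mc{G}^\vee)\to 0.
\]
On the $X$-side, the local-to-global sequence of Section 2 identifies $\Ext^1_X(\mc{F},\mc{O}_X)$ as an extension of $f_*\mcExt^1_X(\mc{F},\mc{O}_X)$ by $H^1(X,\mc{F}^\vee)$. So I must show $\mcExt^1_X(\mc{F},\mc{O}_X)\cong\mb{R}^1g_*\mc{G}^\vee$ locally at each singular point of $X$. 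This is the local statement (a) at a rational singular point of $X$, with $g$ the minimal resolution there.

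For (a) itself, I would compare $\mc{G}^\vee$ with $\wt{M^*}$, or more directly use duality:
\[
\RHom_Y(\mc{G},(f g)^!R)=\RHom_R(M,R).
\]
Then use the triangle relating $\omega_Y$ and $\mc{O}_Y$, namely $\mc{O}_Y\to\omega_Y\otimes\omega_{\ldots}$, which is awkward. Alternatively, I would compute $\mb{R}^1(fg)_*\mc{G}^\vee$ via the extension
\[
0\to\mc{O}_Y^{r}\to\mc{G}\to\ldots
\]
and $\Ext^1_R(M,R)$ via $0\to R^r\to\cdots$.

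I expect proving (a) in arbitrary characteristic, i.e.\ Wunram's $H^1(\mc{G}^\vee)=0$ criterion, to be the main obstacle. I would try first to cite the known characterization of special modules via $\mb{R}^1(fg)_*(\wt M^\vee)=0$ (Wunram, Iyama--Wemyss), whose proof uses only duality and rationality.

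Given (a) and (b), $\mc{F}\in\SCM(X)$ if and only if $f_*\mc{F}\in\SCM(R)$, which gives the restricted equivalence $\SCM(X)\simeq\SCM(R)$.
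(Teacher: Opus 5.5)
Your proof of $\CM(X)\simeq\CM(R)$ is fine, with one correction. ``Both sides are reflexive'' does not justify full faithfulness: $\Hom_X(\mc{F},\mc{G})=f_*\mcHom_X(\mc{F},\mc{G})$ is not a priori reflexive, since $f_*$ of a reflexive sheaf on $X$ need not be. The clean argument is $\Hom_R(L,f_*\wt{M})\cong\Hom_X(f^*L,\wt{M})\cong\Hom_X(\wt{L},\wt{M})$, using that $\wt{M}$ is torsion-free.

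Your step (b) is sound and compares $X$ with $Y$ differently from the paper. The Leray sequence for $g$ (with $g_*\mc{G}^\vee=\mc{F}^\vee$) and the local-to-global sequence on $X$ share the subgroup $H^1(X,\mc{F}^\vee)$. So the two vanishing conditions differ only by stalks at the singular points of $X$.

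\textbf{The gap is (a).} Step (b) only pushes everything onto (a), which is the real content of the $\SCM$ half of the lemma, and you never prove it. You flag it as the main obstacle and fall back on citing Wunram/Iyama--Wemyss. That citation is defensible: the paper itself remarks that the $Y$-versus-$R$ equivalence follows from Iyama--Wemyss. But as written the key step is not established. Your route would also need (a) for the non-complete rings $\mc{O}_{X,p}$, via completion.

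\textbf{The missing idea is the other adjunction.} Your duality attempt uses $\mb{R}h_*\dashv h^!$ and so gets stuck on $h^!R$. Use $\mb{L}h^*\dashv\mb{R}h_*$ with $\mb{R}h_*\mc{O}_Z\cong\mc{O}_W$ instead, which puts $\mc{O}_Z$ directly in the second slot. For any partial resolution $h\colon Z\to W$ and reflexive $\mc{H}$ on $W$, this gives $\Ext^1_W(\mc{H},\mc{O}_W)\cong\Ext^1_Z(\mb{L}h^*\mc{H},\mc{O}_Z)$. The sheaves $\mb{L}_qh^*\mc{H}$ for $q>0$ are torsion, so $\Hom_Z(\mb{L}_1h^*\mc{H},\mc{O}_Z)=0$, and the group equals $\Ext^1_Z(h^*\mc{H},\mc{O}_Z)$. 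The torsion sequence then gives
\[
0\to\Ext^1_Z(\wt{\mc{H}}^Z,\mc{O}_Z)\to\Ext^1_W(\mc{H},\mc{O}_W)\to\Ext^1_Z(\tor h^*\mc{H},\mc{O}_Z).
\]
Applying this to $g$ and $f$ (with Lemmas \ref{lem:XY} and \ref{lem:pullback}) yields injections
\[
\Ext^1_Y(\wt{M}^Y,\mc{O}_Y)\hookrightarrow\Ext^1_X(\wt{M}^X,\mc{O}_X)\hookrightarrow\Ext^1_R(M,R).
\]
Now take the minimal resolution $h=f\circ g$.
\begin{itemize}
\item $\mb{R}h_*(\tor h^*M)=0$, because $\mb{R}h_*h^*M\cong M\cong\mb{R}h_*\wt{M}$.
\item Hence $\tor h^*M$ is an iterated extension of sheaves $\mc{O}_{E_i}(-1)$.
\item $\Ext^1_Y(\mc{O}_E(-1),\mc{O}_Y)\cong H^0(\mc{O}_E(1-n))=0$, since $E^2=-n\le -2$ (Lemma \ref{lem:vanishing_extension}).
\end{itemize}
So the outer groups are isomorphic, and $\Ext^1_X(\wt{M}^X,\mc{O}_X)=0$ if and only if $\Ext^1_R(M,R)=0$. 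This proves your (a) in arbitrary characteristic using only rationality and adjunction, and it makes your Leray/localisation step (b) unnecessary.
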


\begin{proof}
The equivalence between $\CM (X)$ and $\CM (R)$ is shown in
\cite[Proposition 2.7]{MR3886189}. 
It also follows from Lemmas \ref{lem:pullback} and \ref{lem:pushforward}.

\begin{claim} 
Let $h\colon Z\to W$ be a partial resolution of a rational surface singularity $W$, and let $\mc{G}$ be a reflexive sheaf on $W$. 
Then there is an exact sequence
\[
0 \to \Ext^1_Z(\wt{\mc{G}}^Z, \mc{O}_Z) \to \Ext^1_W(\mc{G}, \mc{O}_W) \to \Ext^1_Z(\tor h^*\mc{G}, \mc{O}_Z).
\]
\end{claim}

\begin{proof}[Proof of Claim]
Since $W$ has only rational surface singularities, we have $\mb{R}h_*\mc{O}_Z \cong \mc{O}_W$. By the projection formula in the derived category, we obtain an isomorphism
\[
\Ext^1_W(\mc{G}, \mc{O}_W) \cong \Ext^1_Z(\mb{L}h^*\mc{G}, \mc{O}_Z).
\]
Consider the spectral sequence:
\[
E_2^{p, q} = \Ext^p_Z(\mb{L}_qh^*\mc{G}, \mc{O}_Z) \implies \Ext^{p+q}_Z(\mb{L}h^*\mc{G}, \mc{O}_Z).
\]
For $q > 0$, the $q$-th derived functors $\mb{L}_{q}h^*\mc{G}$ are torsion sheaves supported on the exceptional locus of $h$. Thus, we have $E_2^{0,1} = 0$, yielding an isomorphism
\[
\Ext^1_Z(h^*\mc{G}, \mc{O}_Z) = E_2^{1,0} \cong E^1 \cong \Ext^1_W(\mc{G}, \mc{O}_W).
\]
Applying the functor $\Hom_Z(-, \mc{O}_Z)$ to the exact sequence $0 \to \tor h^*\mc{G} \to h^*\mc{G} \to \wt{\mc{G}}^Z \to 0$, we obtain
\begin{align*}
0&=\Hom_Z(\tor h^*\mc{G}, \mc{O}_Z)\\
\to \Ext^1_Z(\wt{\mc{G}}^Z, \mc{O}_Z) \to \Ext^1_Z(h^*\mc{G}, \mc{O}_Z) &\to \Ext^1_Z(\tor h^*\mc{G}, \mc{O}_Z).
\end{align*}
Via the isomorphism $\Ext^1_Z(h^*\mc{G}, \mc{O}_Z) \cong \Ext^1_W(\mc{G}, \mc{O}_W)$, this gives the desired exact sequence.
\renewcommand{\qedsymbol}{$\triangle$}
\end{proof}

Now, applying the Claim to the (partial) resolutions $g\colon Y\to X$ and $f\colon X\to \Spec R$, we obtain a sequence of injections:
\[
\Ext^1_Y(\wt{M}^Y,\mc{O}_Y)\hookrightarrow
\Ext^1_X(\wt{M}^X,\mc{O}_X)\hookrightarrow
\Ext^1_R(M,R). 
\]
Next, applying the Claim to the minimal resolution $f \circ g\colon Y \to \Spec R$, we have the exact sequence for $M\in \CM (R)$:
\[
0 \to \Ext^1_Y(\wt{M}^Y, \mc{O}_Y) \to \Ext^1_R(M, R) \to \Ext^1_Y(\tor (f\circ g)^*M, \mc{O}_Y).
\]
Since $\mb{R}(f\circ g)_*(f\circ g)^*M\cong M$ and $\mb{R}(f\circ g)_*\wt{M}^Y\cong M$, we have $\mb R (f\circ g)_*(\tor (f\circ g)^*M)=0$. 
Then, Lemma \ref{lem:vanishing_extension} yields $\Ext^1_Y(\tor (f\circ g)^*M, \mc{O}_Y)=0$. 
The exact sequence for $f \circ g$ therefore forces an isomorphism $\Ext^1_Y(\wt{M}^Y,\mc{O}_Y)\cong \Ext^1_R(M,R)$. 
Combined with the injections above, we conclude that all terms are isomorphic:
\[
\Ext^1_Y(\wt{M}^Y,\mc{O}_Y)\cong
\Ext^1_X(\wt{M}^X,\mc{O}_X)\cong
\Ext^1_R(M,R). 
\]
Consequently, $M\in \SCM (R)$ if and only if $\wt{M}^X\in \SCM(X)$. This completes the proof.
\end{proof}

\begin{remark}
In the proof of Lemma \ref{lem:special_partial}, the equivalence $\Ext^1_Y(\wt{M}^Y,\mc{O}_Y)=0 \iff \Ext^1_R(M,R)=0$ also follows directly from  \cite[Theorem 2.2]{MR3006691}. The primary point of the proof above is to establish that this property holds for the minimal partial resolution $X$.
\end{remark}

\begin{example}
Let $R = \C[[x,y]]^G$ be the two-dimensional cyclic quotient singularity of type $\frac{1}{n}(1,1)$, where $G \cong \mathbb{Z}/n\mathbb{Z}$ acts on $\C[[x,y]]$ via $(x,y) \mapsto (\zeta x, \zeta y)$ for a primitive $n$-th root of unity $\zeta$.
Let $f \colon Y \to \operatorname{Spec} R$ be the minimal resolution.
The exceptional set consists of a single smooth rational curve $E \cong \mathbb{P}^1$ with $E^2 = -n$.
\begin{enumerate}
    \item \textbf{Classification of $\ind\CM(R)$ and $\ind\SCM(R)$:} \\
    The indecomposable Cohen--Macaulay (CM) modules over $R$ correspond to the irreducible representations of $G$. There are exactly $n$ such modules up to isomorphism. They are given by the $\zeta^i$-isotypic components of $S$, which are generated as $R$-submodules by the monomials of degree $i$:
    $$M_i = \{ f \in \C[[x,y]] \mid g \cdot f = \zeta^i f \} = R x^i + R x^{i-1}y + \dots + R y^i$$
    for $0 \le i \le n-1$.
    We remark that $\omega_R \cong M_{n-2}$ and $M_i\cong M_{n-i}^\vee$ for $0<i<n$.
    Among these, the indecomposable special Cohen--Macaulay (SCM) modules are in one-to-one correspondence with the exceptional curves, plus the trivial module $R$. Hence, there are exactly two SCM modules:
    \[
    M_0 = R \quad \text{and} \quad M_1 = Rx+Ry.
    \]

    \item \textbf{Reflexive pullbacks:} \\
    For each $0 \le i \le n-1$, the reflexive pullback $\widetilde{M_i}$ is a line bundle on $Y$.
    Let $D_x$ be the strict transform of the divisor $\{x=0\}$ on $Y$, and set $\mc{L} := \mathcal{O}_Y(D_x)$. Then we have $\mc{L} \cdot E = 1$, and $\Pic Y \cong \Span{\mc{L}}$.
    Moreover, we have an isomorphism $\widetilde{M_i} \cong \mc{L}^{\otimes i}$.
    
   \item \textbf{Description of the pushforward $f_* \mathcal{L}^{\otimes i}$:} \\
    Note that $\omega_Y\cong \widetilde{\omega_R} \cong \mc{L}^{\otimes n-2}$. The necessary and sufficient condition for the vanishing of the first extension group is given by:
    \[
    \operatorname{Ext}^1_Y(\mathcal{L}^{\otimes i}, \omega_Y) = 0 \iff  i \le n-1.
    \]
    Then Lemma \ref{lem:pushforward}(i) implies that $f_*\mc{L}^{\otimes i}$ is a reflexive module (in other words, a CM module) for $i \le n-1$. Actually, the pushforward $f_* \mathcal{L}^{\otimes i}$ can be described as follows:
    
    For any integer $i \in \mathbb{Z}$, write $i = qn + r$ with $q \in \mathbb{Z}$ and $0 \le r \le n-1$.
    \begin{itemize}
        \item If $i \ge 0$, then $f_* \mathcal{L}^{\otimes i} \cong \mathfrak{m}_R^q M_r$, where $\mathfrak{m}_R = (x^n, x^{n-1}y, \dots, y^n)R$ is the maximal ideal of $R$. In this case, $f_* \mathcal{L}^{\otimes i}$ is a CM module if and only if $0 \le i \le n-1$ (i.e., $q=0$).
        \item If $i < 0$, let $r'$ be the unique integer $0 \le r' \le n-1$ such that $i \equiv r' \pmod n$. Then $f_* \mathcal{L}^{\otimes i}$ is isomorphic to $M_{r'}$ as an $R$-module. In particular, $f_* \mathcal{L}^{\otimes i}$ is always a CM module for any $i < 0$.
    \end{itemize}
\end{enumerate}
\end{example}


\subsection{Wunram's correspondence for minimal partial resolutions}
In \cite{MR926422}, Wunram established a one-to-one correspondence between the set
$\{E_i\}_{i\in I}$ of exceptional curves of $f\circ g$
and the set of nontrivial indecomposable special Cohen--Macaulay $R$-modules.
He stated this result for two-dimensional quotient singularities over $\mb{C}$.
For completeness, we provide a proof for arbitrary rational surface singularities over an algebraically closed field $k$ of arbitrary characteristic, using results in \cite[\S 3.5]{MR2057015}. 
This result is presumably standard among specialists.
Moreover, we generalize the correspondence to the setting of a minimal partial resolution $f$.

Recall that the correspondence $\mc{L}\in \Pic X\mapsto \mc{L}\cdot E_i\in \mb{Z}$ defines an isomorphism $\Pic X\cong \mb{Z}^{I_f}$. 
Let $\mc{L}_i$ be an invertible sheaf satisfying $\mc{L}_i\cdot E_j=\delta_{ij}$.

Let $\mc{M}_i\in \mk{V}(X)$ be the extension
\[
0\to \mc{O}_X^{\oplus r_i-1}\to \mc{M}_i\to \mc{L}_i\to 0
\]
associated to a minimal set of $r_i-1$ generators for $H^1(X,\mc{L}_i^{-1})$.
Note that $c_1(\mc{M}_i)=\mc{L}_i$.
We see that $\mc{M}_i$ is indecomposable.
Furthermore, any nontrivial indecomposable sheaf in $\mk{V}(X)$ can be constructed in this way (\cite[Proposition 3.5.4]{MR2057015}):
\begin{equation*}
\xymatrix{
\{E_i\}_{i\in I_f} \ar[rr]^{\text{Van den Bergh}\qquad} & & \ind \mk{V}(X)\setminus \{\mc{O}_X\}
}
\end{equation*}

Take a nontrivial indecomposable special Cohen--Macaulay $R$-module $M$.
Since $Y$ is smooth, the reflexive sheaf $\wt{M}^Y$ is locally free, and hence
$\wt{M}^Y\in\mk{V}(Y)$. 
Since
\[
M\cong (f\circ g)_*\wt{M}^Y,
\]
it follows from Lemma~\ref{lem:special_partial} that 
there is a unique indexing $\{M_i\}_{i\in I}$ of the set of nontrivial indecomposable special Cohen--Macaulay $R$-modules such that 
\begin{equation}\label{eqn:Wunram_type}
c_1(\widetilde M_i^Y)\cdot E_j=\delta_{ij}.
\end{equation}
for all $i,j\in I$.

Define the special Cohen--Macaulay module $N_f$ by
\[
N_f:=\bigoplus_{i\in I_f}M_i\oplus R.
\]

Hence, for the minimal resolution $f\circ g$, we obtain the following bijections: 
\[
\begin{tikzcd}[row sep=20pt, column sep=15pt]
  & \{E_i\}_{i\in I} \arrow[dl, leftrightarrow, "\text{Wunram (for }\mb{C})" sloped] \arrow[dr, leftrightarrow, "\text{Van den Bergh}" sloped] & \\
  \ind\SCM(R) \setminus\{R\}= \{M_i\}_{i\in I} & & \ind \mk{V}(Y)\setminus \{\mc{O}_Y\} =\ind \SCM(Y)\setminus\{\mc{O}_Y\}
 \arrow[ll, leftrightarrow, "\text{Lemma \ref{lem:special_partial}}"]
\end{tikzcd}
\]

Let $\{p_s\}$ be the set of singular points of $X$, and 
define
\[
T:=\prod_s \widehat{\mc{O}_{X,p_s}}.
\]
Consider a natural flat morphism $\iota\colon \Spec T\to X$ and take the fiber product $Y_T:=Y\times_X\Spec T$. 
\begin{equation}\label{eqn:resolution_diagram}
\xymatrix{
Y \ar[r]^g           &  X  \ar[r]^f                                     & \Spec R  \\
Y_T \ar[r]^{g_T} \ar[u]_{\iota_Y}          &  \Spec T \ar[u]_{\iota}      &  \\
}
\end{equation}

Every exceptional curve $E_i$ of $g$, i.e., $i\in I_g$, corresponds to an exceptional curve of $g_T$, and we denote it  by $E_i$ for simplicity. 
Then, by the result explained  above, there is a one-to-one correspondence between the set
$\{E_i\}_{i\in I_g}$ of exceptional curves of $g$
and the set
$\{M'_i\}_{i\in I_g}$
of nontrivial indecomposable special Cohen--Macaulay $T$-modules.

\begin{lemma}\label{lem:locallyfree}
Define 
$\wt{M_i}^X|_T:=\iota^*(\wt{M_i}^X)$
for $i\in I$. Then we have
\begin{equation*}
\wt{M_i}^X|_T\cong 
\begin{cases}
 M'_i\oplus F_i & \text{if } i\in I_g, \\
 T^{\oplus \rank M_i} & \text{if } i\in I_f
\end{cases}
\end{equation*}
for some $F_i\in \add T$. Consequently, $\wt{M_i}^X$ is locally free if and only if $i\in I_f$. In other words, $\add \wt{N_f}^X=\mk{V}(X)$.
\end{lemma}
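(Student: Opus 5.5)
Starting from Lemma~\ref{lem:special_partial}, each $\wt{M_i}^X$ is an indecomposable object of $\SCM(X)$, and $\wt{\wt{M_i}^X}^Y\cong \wt{M_i}^Y$ by Lemma~\ref{lem:XY}. I would first localize along the flat morphism $\iota$. Flat base change shows that $\iota^*$ commutes with reflexive pullback, with $g_*$, and with $\Ext^1$, the last because all the relevant $\mcExt^1$ are supported at the singular points $p_s$ and $\mb{R}^1 g_*$ is compatible with flat base change. The module $\iota^*\wt{M_i}^X$ is therefore a reflexive, that is Cohen--Macaulay, $T$-module, and its reflexive pullback to $Y_T$ is $\iota_Y^*\wt{M_i}^Y$. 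The vanishing $\Ext^1_X(\wt{M_i}^X,\mc{O}_X)=0$ pulls back to give $\Ext^1_T(\iota^*\wt{M_i}^X,T)=0$. Hence $\iota^*\wt{M_i}^X\in\SCM(T)$, and by Krull--Schmidt it decomposes as a direct sum of copies of $T$ and of the modules $M'_j$ with $j\in I_g$.

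Next I would identify which $M'_j$ occur, using first Chern classes. The sheaf $\iota_Y^*\wt{M_i}^Y$ is a globally generated locally free sheaf in $\mk{V}(Y_T)$. Its degree on $E_j$ for $j\in I_g$ equals $c_1(\wt{M_i}^Y)\cdot E_j=\delta_{ij}$ by \eqref{eqn:Wunram_type}, since degree on a curve is preserved by the flat base change $\iota_Y$. Applying Lemma~\ref{lem:special_partial} (for the minimal resolution $g_T$ of $\Spec T$), reflexive pullback turns the decomposition into
\[
\iota_Y^*\wt{M_i}^Y\cong\bigoplus_j \wt{M'_j}^{Y_T\,\oplus a_j}\oplus\mc{O}_{Y_T}^{\oplus b}.
\]
By the Wunram correspondence over $T$, the $c_1$ of $\wt{M'_j}^{Y_T}$ has degree $\delta_{jk}$ on $E_k$, and each summand is globally generated, so all degrees are nonnegative. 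Therefore, for $j\in I_g$, the coefficient $a_j$ equals $c_1(\wt{M_i}^Y)\cdot E_j$. For $i\in I_f$ all $a_j=0$, so $\iota^*\wt{M_i}^X\cong T^{\oplus\rank M_i}$. For $i\in I_g$ exactly one copy of $M'_i$ occurs, together with a free part $F_i$.

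For the consequences, $\wt{M_i}^X$ is locally free away from the points $p_s$. At each $p_s$ it is locally free if and only if its completion is free, since $T$ is faithfully flat over the local rings. This holds exactly when $i\in I_f$, because $M'_i$ is not free. Every indecomposable object of $\SCM(X)$ has the form $\wt{M_i}^X$ or $\mc{O}_X$ by Lemma~\ref{lem:special_partial}, and $\mk{V}(X)$ is closed under summands and contained in $\SCM(X)$. It follows that $\add\wt{N_f}^X=\mk{V}(X)$.

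I expect the main obstacle to be the first step, the compatibility of the reflexive pullback with $\iota$. Concretely, one has to check that $\iota^*\wt{M}^X$ and the reflexive pullback $\wt{(\cdot)}^{Y_T}$ over $T$ match, and that torsion is preserved under flat pullback, before the Wunram correspondence over $T$ can be applied.
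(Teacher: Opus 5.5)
Your proposal is correct and follows essentially the same route as the paper. You show $\iota^*\wt{M_i}^X\in\SCM(T)$ by flatness, decompose it into copies of the $M'_j$ ($j\in I_g$) and $T$, read off the multiplicities $a_{ij}=c_1(\wt{M_i}^Y)\cdot E_j=\delta_{ij}$ by comparing first Chern classes of the reflexive pullbacks on $Y_T$, and then deduce local freeness from faithful flatness of completion. Your remark that all degrees are nonnegative is unnecessary, since the Wunram correspondence over $T$ gives the intersection numbers $\delta_{jk}$ exactly.
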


\begin{proof}
We use the Wunram type correspondence \eqref{eqn:Wunram_type} on the minimal resolution. In particular, for each exceptional curve $E_i$, there is a unique non-free indecomposable special CM module $M_i$ whose reflexive pullback has first Chern class satisfying
$
c_1(\widetilde M_i^Y)\cdot E_j=\delta_{ij}.$
We now explain why this correspondence descends to the present partial resolution.

For $i\in I$, it follows from Lemma \ref{lem:pullback} and the flatness of $\iota$ that $\widetilde{M_i}^X|_T$ is reflexive. Moreover, it is a special Cohen--Macaulay $T$-module. Hence 
$$
\widetilde{M_i}^X|_T\cong \bigoplus_{j\in I_g}{M'_j}^{\oplus a_{ij}}\oplus F_i
$$
for some $a_{ij}\ge 0$ and $F_i\in \add T$. We also see 
$(\widetilde{\widetilde{M_i}^X}^Y) |_{Y_T}\cong \widetilde{\widetilde{M_i}^X|_T}^{Y_T}$ by the flatness of $\iota$. 
For $h\in I_g$, we have
\begin{align*}
\delta_{ih}&=c_1(\widetilde{M_i}^Y)\cdot E_h= c_1(\widetilde{\widetilde{M_i}^X}^Y)\cdot E_h
             = c_1(\widetilde{\widetilde{M_i}^X|_T}^{Y_T})\cdot E_h \\
             &= \sum_{j\in I_g}a_{ij}c_1(\widetilde{M'_j}^{Y_T})\cdot E_h=\sum_{j\in I_g}a_{ij}\delta_{jh}=a_{ih}.
\end{align*}
For any $i\in I_g$, $a_{ii}=1$ and $a_{ij}=0$ for $j\neq i$.
For any $i\in I_f$, $a_{ij}=0$ for all $j\in I_g$.
The first claim follows. 

The reflexive sheaf $\widetilde{M_i}^X$ is locally free outside the singular locus of $X$. Hence, it is locally free if and only if $(\widetilde{M_i}^X)_{p_s}$ is a free $\mc{O}_{X,p_s}$-module for all $p_s\in \Sing X$. Since $(-)\otimes_{\mc{O}_{X,p_s}}\widehat{\mc{O}_{X,p_s}}$ is faithfully flat, the second claim follows from the first. 
\end{proof}

We summarize the situation in Proposition \ref{prop:Wunram_partial}, which is a generalization of Wunram's result in \cite{MR926422} for minimal partial resolutions. We remark that this proposition can also be deduced from \cite[Theorem 4.3]{MR3886189}, although it is not explicitly stated in this form there.

\begin{proposition}\label{prop:Wunram_partial}
Let $(R,\mk{m})$ be a $2$-dimensional complete local normal domain with a rational singularity over an algebraically closed field $k$, and let $f\colon X\to \Spec R$ be a minimal partial resolution. Then there is a one-to-one correspondence between
the set $\{E_i\}_{i \in I_f}$ of exceptional divisors of $f$ and
the set of isomorphism classes of nontrivial indecomposable special Cohen--Macaulay $R$-modules $M$ such that $\wt{M}^X$ is locally free on $X$. 
Specifically, this set of modules corresponds to $\{M_i\}_{i \in I_f} \subset \ind\SCM(R)$.
\end{proposition}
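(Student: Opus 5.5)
The proposition is essentially a repackaging of Lemma \ref{lem:locallyfree} together with the Wunram-type indexing \eqref{eqn:Wunram_type}. The plan is to prove it in three steps.

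\textbf{Step 1 (indexing).} Start from the indexing of nontrivial indecomposable special Cohen--Macaulay modules. By Lemma \ref{lem:special_partial}, applied to the minimal resolution $f\circ g$, the assignment $M\mapsto \wt{M}^Y$ is a bijection
\[
\ind\SCM(R)\setminus\{R\}\ \longleftrightarrow\ \ind\mk{V}(Y)\setminus\{\mc{O}_Y\}.
\]
Van den Bergh's classification \cite[Proposition 3.5.4]{MR2057015} of $\ind\mk{V}(Y)$ via first Chern classes then produces the labelling $\{M_i\}_{i\in I}$ satisfying \eqref{eqn:Wunram_type}. Injectivity of $i\mapsto M_i$ is automatic, since $c_1(\wt{M_i}^Y)$ determines $i$.

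\textbf{Step 2 (locally free pullbacks).} Determine which $M_i$ have locally free reflexive pullback on $X$. Lemma \ref{lem:locallyfree} states exactly that $\wt{M_i}^X$ is locally free if and only if $i\in I_f$. Its proof proceeds as follows.
\begin{enumerate}
\item Base change along the faithfully flat completion $\iota\colon\Spec T\to X$ at the singular points.
\item Decompose $\iota^*\wt{M_i}^X$ into special Cohen--Macaulay $T$-modules.
\item Compare Chern class intersection numbers with the curves $E_h$, $h\in I_g$, using Lemma \ref{lem:XY} and the compatibility of reflexive pullback with flat base change.
\end{enumerate}
The trivial module $R$ is excluded from the correspondence by definition; note that $\wt{R}^X=\mc{O}_X$.

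\textbf{Step 3 (assembling the bijection).} Combine the two steps. The set of isomorphism classes of nontrivial indecomposable SCM modules $M$ with $\wt{M}^X$ locally free is $\{M_i\}_{i\in I_f}$. Under the identification \eqref{eqn:If} of $\{E_i\}_{i\in I_f}$ with the exceptional divisors of $f$, the map $E_i\mapsto M_i$ is therefore a bijection.

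As a consistency check, one can also match this with Van den Bergh's bijection on $X$: $\{E_i\}_{i\in I_f}\leftrightarrow\ind\mk{V}(X)\setminus\{\mc{O}_X\}$. By Lemma \ref{lem:special_partial}, $\wt{M_i}^X$ is indecomposable in $\SCM(X)$. For $j\in I_f$, using $g_*\wt{\wt{M_i}^X}^Y=\wt{M_i}^X$, we compute
\[
c_1(\wt{M_i}^X)\cdot E_j=c_1(\wt{M_i}^Y)\cdot g^*E_j=\delta_{ij}.
\]
Here we use the projection formula, and the fact that $g^*E_j$ differs from the strict transform by curves $E_h$, $h\in I_g$, on which $c_1(\wt{M_i}^Y)$ has degree $0$ because $i\in I_f$.

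\textbf{Main obstacle.} The only delicate point is already handled in Lemma \ref{lem:locallyfree}: showing that $i\in I_g$ forces a nonfree summand $M'_i$ after completion. This relies on the Chern class computation being compatible with base change to $Y_T$. Given that lemma, the proposition follows formally.
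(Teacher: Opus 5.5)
Your proposal is correct and follows the paper's route: the proposition is exactly the Wunram-type indexing \eqref{eqn:Wunram_type} (via Lemma \ref{lem:special_partial} for $f\circ g$ and Van den Bergh's classification on $Y$) combined with Lemma \ref{lem:locallyfree}, and your consistency check against Van den Bergh's bijection on $X$ is the arrow labelled ``Van den Bergh + Lemma \ref{lem:locallyfree}'' in \eqref{eqn:wunram-type}. One small wording fix: $\iota\colon\Spec T\to X$ is flat but not faithfully flat, since it is not surjective; what the argument actually uses is the faithful flatness of $\mc{O}_{X,p_s}\to\widehat{\mc{O}_{X,p_s}}$, and this suffices because reflexive sheaves are already locally free away from $\Sing X$.
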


\begin{equation}\label{eqn:wunram-type}
\begin{tikzcd}[row sep=15pt, column sep=15pt]
  & \{E_i\}_{i\in I} \arrow[dl, leftrightarrow, "\text{Wunram-type}" sloped] \arrow[dr, leftrightarrow] & \\
  \ind\SCM(R) \setminus\{R\} & & \ind\SCM(X)\setminus\{\mc{O}_X\} \arrow[ll, leftrightarrow, "\text{Lemma \ref{lem:special_partial}}"'{pos=0.25}] \\
  & \{E_i\}_{i\in I_f} \arrow[dl, leftrightarrow, "\text{Proposition \ref{prop:Wunram_partial}}" sloped, pos=0.4] \arrow[dr, leftrightarrow, "\scriptsize\shortstack{Van den Bergh \\ + Lemma \ref{lem:locallyfree}}" sloped, pos=0.4] \arrow[uu, hook] & \\
  \ind(\add N_{f})\setminus\{R\} \arrow[uu, hook] & & \ind(\add \wt{N_f})\setminus \{\mc{O}_X\} \arrow[ll, leftrightarrow] \arrow[uu, hook]
\end{tikzcd}
\end{equation}

In Theorem \ref{thm:DX}, we obtain a categorification of this diagram. 


\section{A Frobenius structure on $\SCM (X)$}\label{sec:Frobenius structure on SCM}
We keep the notation in \S \ref{sec:Wunram-type}. We begin with the following lemma:

\begin{lemma}\label{lem:local-global}
Recall that 
$\iota\colon \Spec T\to X$ is the morphism in the diagram \eqref{eqn:resolution_diagram}, and denote $\iota^*\mc{F}$ by $\mc{F}|_T$.
\begin{enumerate}
\item  For a reflexive sheaf $\mc{F}$ and a sheaf $\mc{G}$ on $X$, the restriction map 
\begin{equation*}
\epsilon\colon \Ext^1_X(\mc{F},\mc{G})\to \Ext^1_T(\mc{F}|_T,\mc{G}|_T)
\end{equation*}
is surjective. Moreover, it is an isomorphism if $\mc{F}\in \SCM (X)$ and $\mc{G}$ is globally generated.
 \item For $\mc{F}\in \CM (X)$ (respectively, $\in \SCM (X)$), we have $\mc{F}|_T\in\CM (T)$  (respectively, $\in \SCM (T)$).
\end{enumerate}
\end{lemma}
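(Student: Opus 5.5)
Throughout, I use the local-to-global short exact sequence from \S\ref{sec:preliminaries}, applied to $f\colon X\to\Spec R$:
\[
0\to \mb{R}^1f_*\mcHom_X(\mc{F},\mc{G})\to \Ext^1_X(\mc{F},\mc{G})\to f_*\mcExt^1_X(\mc{F},\mc{G})\to 0 .
\]
Since $\mc{F}$ is reflexive, it is locally free away from $\{p_s\}$. Hence $\mcExt^1_X(\mc{F},\mc{G})$ is supported on the finite set of singular points, and
\[
f_*\mcExt^1_X(\mc{F},\mc{G})\cong\bigoplus_s \mcExt^1_X(\mc{F},\mc{G})_{p_s}.
\]
An Ext module of finite length is unchanged by completion, and $\iota$ is flat. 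So
\[
\mcExt^1_X(\mc{F},\mc{G})_{p_s}\cong \mcExt^1_X(\mc{F},\mc{G})_{p_s}\otimes\widehat{\mc{O}_{X,p_s}}\cong \Ext^1_{\widehat{\mc{O}_{X,p_s}}}(\widehat{\mc{F}_{p_s}},\widehat{\mc{G}_{p_s}}),
\]
and the product of these over $s$ is $\Ext^1_T(\mc{F}|_T,\mc{G}|_T)$, because $\Spec T$ is affine. The next step is to check that $\epsilon$ factors as the surjection $\Ext^1_X(\mc{F},\mc{G})\to f_*\mcExt^1_X(\mc{F},\mc{G})$ followed by this isomorphism. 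This is a naturality statement: restriction along $\iota$ commutes with the edge map $\Ext^1\to H^0\mcExt^1$, which I would verify from the functoriality of the local-to-global spectral sequence under flat pullback. Granting that, surjectivity of $\epsilon$ follows, and $\Ker\epsilon=\mb{R}^1f_*\mcHom_X(\mc{F},\mc{G})$.

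For the second assertion of (i), take $\mc{F}\in\SCM(X)$ and $\mc{G}$ globally generated. Then $\Ext^1_X(\mc{F},\mc{O}_X)=0$, so by the same sequence with $\mc{G}=\mc{O}_X$ we get $\mb{R}^1f_*(\mc{F}^\vee)=0$. Lemma \ref{lem:surjection} then gives $\mb{R}^1f_*\mcHom_X(\mc{F},\mc{G})=0$, so $\Ker\epsilon=0$ and $\epsilon$ is an isomorphism.

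For (ii), note first that $\mc{F}|_T$ is reflexive, i.e. CM over $T$: reflexivity is preserved by the flat base change $\iota$, as already used in Lemma \ref{lem:locallyfree}. Every CM $T$-module satisfies $\Ext^1_T(-,\omega_T)=0$, so $\mc{F}|_T\in\CM(T)$ holds automatically once I identify $\omega_X|_T\cong\omega_T$, which holds because $\iota$ is flat with regular fibers onto a completion. In the special case, apply the surjectivity in (i) with $\mc{G}=\mc{O}_X$: the group $\Ext^1_X(\mc{F},\mc{O}_X)=0$ surjects onto $\Ext^1_T(\mc{F}|_T,T)$, which therefore vanishes, so $\mc{F}|_T\in\SCM(T)$.

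I expect the main obstacle to be the compatibility used in (i), namely identifying $\epsilon$ with the edge map followed by the completion isomorphism. I would handle it by computing $\Ext^1_X$ using a locally free resolution on the affine-over-$R$ scheme $X$, or a Čech model, and tracking it through $\iota^*$, using that $\mb{R}^1f_*$ restricts to zero on the $0$-dimensional $\Spec T$.
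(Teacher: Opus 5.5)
Your proof is correct and follows the paper's argument essentially verbatim. The local-to-global sequence makes $\Ext^1_X(\mc{F},\mc{G})\to H^0(X,\mcExt^1_X(\mc{F},\mc{G}))\cong\Ext^1_T(\mc{F}|_T,\mc{G}|_T)$ surjective with kernel $\mb{R}^1f_*\mcHom_X(\mc{F},\mc{G})$, which Lemma \ref{lem:surjection} kills in the $\SCM$ case; (ii) then follows from flatness of $\iota$ and surjectivity of $\epsilon$ for $\mc{G}=\mc{O}_X$. You only add detail on the completion identification and on compatibility with the edge map, both of which the paper asserts without comment, and one small slip should be fixed: $\Spec T$ is $2$-dimensional, not $0$-dimensional, and the $H^1$ term vanishes there because $\Spec T$ is affine.
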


 \begin{proof}  
 (i) By the local-global spectral sequence, we have an exact sequence
\begin{align*}
0\to &H^1(X,\mcHom_X(\mc{F},\mc{G}))\to \Ext^1_X(\mc{F},\mc{G})\stackrel{\epsilon'}\to \\
&H^0(X,\mcExt ^1_X(\mc{F},\mc{G}))\to H^2(X,\mcHom _X(\mc{F},\mc{G}))=0.
\end{align*}
Hence, $\epsilon'$ is surjective. 
Assume furthermore that $\mc{F}\in \SCM (X)$ and $\mc{G}$ is globally generated. Then, 
Lemma \ref{lem:surjection} implies that 
$H^1(X,\mcHom_X(\mc{F},\mc{G}))=0$. Therefore,
$\epsilon'$ is an isomorphism.

On the other hand,  note that $\mcExt ^1_X(\mc{F},\mc{G})$ is supported on the singularities of $\Spec T$, and thus we have 
$H^0(X,\mcExt ^1_X(\mc{F},\mc{G}))\cong \Ext^1_T(\mc{F}|_T,\mc{G}|_T)$. 
Under this isomorphism, we have $\epsilon=\epsilon'$. This completes the proof.

 (ii) Since $\iota$ is flat, $\mc{F}|_T$ is reflexive. The remaining conditions follow from (i).
\end{proof}

We define the following full subcategories of $\Coh (X)$, following \cite[\S 3.1]{MR2057015}.
\begin{align*}
\msr{C}&:=\{ \mc{F}\in \Coh (X) \mid \mb{R}f_* \mc{F}=0\},\\
\msr{T}_{-1}&:=\{ \mc{T}\in \Coh (X) \mid \mb{R}^1f_* \mc{T}=0, \Hom_X(\mc{T},\msr{C})=0 \},\\
\msr{F}_{-1}&:=\{ \mc{F}\in \Coh (X) \mid f_* \mc{F}=0\},\\
\msr{T}_{0}&:=\{ \mc{T}\in \Coh (X) \mid \mb{R}^1f_*\mc{T}=0 \},\\ 
\msr{F}_{0}&:=\{ \mc{F}\in \Coh (X) \mid f_* \mc{F}=0, \Hom_X(\msr{C},\mc{F})=0 \}.
\end{align*}

Recall that for $p=-1,0$, the pairs $(\msr{T}_p,\msr{F}_p)$ are torsion theories on $\Coh (X)$ and the heart 
\[
{}^{p}\Per (X/R):=\Span{\msr{F}_p[1],\msr{T}_p}_{\text{ext}}
\]
of the t-structure consists of objects $\mc{G}\in D^b(X)$ whose cohomology sheaves satisfy $\mc{H}^{i}(\mc{G})=0$ for $i\ne 0,-1$, $\mc{H}^{-1}(\mc{G})\in \msr{F}_{p}$, and $\mc{H}^{0}(\mc{G})\in \msr{T}_p$.
Note that a sheaf $\mc{T}$ belongs to $\msr{T}_{-1}$ if and only if it is globally generated (\cite[Lemma 3.1.3]{MR2057015}). 

Moreover, $\wt{N_f}$ (and hence its dual $\wt{N_f}^\vee$) is a tilting bundle on $X$ (see \cite[Proposition 3.2.7]{MR2057015}), and thus it induces triangle equivalences:
\begin{equation}\label{eqn:tilting}
\xymatrix{
\RHom _X(\widetilde{N_f},-)&\colon &D^b(X)\ar[r] &D^b(\mod \End_X(\widetilde{N_f}))\\
&& {}^{-1}\Per (X/R)\ar[r]\ar@{}[u]|{\bigcup} & \mod\End_X(\widetilde{N_f})\ar@{}[u]|{\bigcup} 
}
\end{equation}
\begin{equation}\label{eqn:tilting_dual}
\xymatrix{
\RHom _X(\widetilde{N_f}^\vee,-)&\colon &D^b(X)\ar[r] &D^b(\mod \End_X(\widetilde{N_f}^\vee))\\
&& {}^{0}\Per (X/R)\ar[r]\ar@{}[u]|{\bigcup} & \mod\End_X(\widetilde{N_f}^\vee)\ar@{}[u]|{\bigcup} 
}
\end{equation}
Consequently, we have 
\begin{equation}\label{eqn:projPer} 
\add \widetilde{N_f}=\proj {}^{-1}\Per (X/R).
\end{equation}

\begin{lemma}\label{lem:enough}
Both $\CM (X)$ and $\SCM (X)$ have enough projectives and injectives.   
\end{lemma}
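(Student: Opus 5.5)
The plan is to build approximation sequences by hand inside $\Coh(X)$. Projective covers will come from the tilting bundle $\wt{N_f}$ and the perverse heart ${}^{-1}\Per(X/R)$, and injective envelopes from universal extensions. The decomposition $\Ext^1_X=\mb{R}^1f_*\mcHom\oplus f_*\mcExt^1$ (local-to-global) and Lemma~\ref{lem:local-global} will reduce the remaining local questions to the complete local ring $T$.

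\emph{Projectives.} First, every $\mc{P}\in\mk{V}(X)=\add\wt{N_f}$ (Lemma~\ref{lem:locallyfree}) is projective in both $\CM(X)$ and $\SCM(X)$. Indeed, for $\mc{G}$ globally generated,
\[
\Ext^1_X(\mc{P},\mc{G})=H^1(X,\mcHom_X(\mc{P},\mc{G})),
\]
and this vanishes by Lemma~\ref{lem:surjection}, since $\mb{R}^1f_*\mc{P}^\vee=0$ follows from $\Ext^1_X(\mc{P},\mc{O}_X)=0$.

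Now let $\mc{F}$ be in $\CM(X)$ or $\SCM(X)$. Since $\mc{F}$ is globally generated, $\mc{F}\in\msr{T}_{-1}\subset{}^{-1}\Per(X/R)$. By \eqref{eqn:projPer} we can choose $\mc{P}\in\add\wt{N_f}$ and $\pi\colon\mc{P}\to\mc{F}$ with $\Hom(\wt{N_f},\mc{P})\to\Hom(\wt{N_f},\mc{F})$ surjective; including $\mc{O}_X$ summands makes $\pi$ surjective as a map of sheaves. Let $\mc{K}=\Ker\pi$. We check four properties of $\mc{K}$:
\begin{itemize}
\item $\mc{K}$ is the kernel of $\pi$ in the heart, so $\mc{K}\in\msr{T}_{-1}$, i.e.\ $\mc{K}$ is globally generated.
\item $\mc{K}$ is reflexive by a depth count, being the kernel of a surjection from a locally free sheaf onto a reflexive sheaf on a surface.
\item In the $\CM$ case, apply $\mcHom(-,\omega_X)$. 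Since $\mcExt^1(\mc{F},\omega_X)=0$, the sheaf $\mcHom(\mc{K},\omega_X)$ is a quotient of $\mc{P}^\vee\otimes\omega_X$. Then $\mb{R}^2f_*=0$ and Lemma~\ref{lem:surjection} (with $\omega_X$ globally generated by Lemma~\ref{lem:omega}) give $\mb{R}^1f_*\mcHom(\mc{K},\omega_X)=0$. The local term vanishes because $\mc{K}$ is reflexive. Hence $\mc{K}\in\CM(X)$.
\item In the $\SCM$ case, the same argument with $\mc{O}_X$ in place of $\omega_X$ gives $\mb{R}^1f_*\mc{K}^\vee=0$. This uses that $\mcExt^1(\mc{F},\mc{O}_X)=0$, which holds because $\Ext^1_X(\mc{F},\mc{O}_X)=0$ forces its global sections to vanish and the sheaf is $0$-dimensional. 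It remains to show $\mcExt^1_X(\mc{K},\mc{O}_X)=0$, which by Lemma~\ref{lem:local-global} means $\mc{K}|_T\in\SCM(T)$.
\end{itemize}

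\emph{Injectives.} For $\mc{F}$ in the category, take a candidate cogenerator $\mc{Q}$: $\wt{\omega}$-type objects for $\CM(X)$, and $\wt{N_{f\circ g^c}}$ for $\SCM(X)$. Choose generators $\xi_1,\dots,\xi_m$ of the finitely generated $R$-module $\Ext^1_X(\mc{Q},\mc{F})$, and form the universal extension
\[
0\to\mc{F}\to\mc{E}\to\mc{Q}^{\oplus m}\to0 .
\]
Then $\mc{E}$ lies in the category, since the category is closed under extensions (Remark~\ref{rem:CMSCM}(iii)). Moreover $\Ext^1(\mc{Q},\mc{E})=0$, because the connecting map $\Hom(\mc{Q},\mc{Q}^{\oplus m})\to\Ext^1(\mc{Q},\mc{F})$ is surjective and $\Ext^1(\mc{Q},\mc{Q})=0$. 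To finish, $\mc{E}$ must be shown injective, i.e.\ $\Ext^1(\mc{G},\mc{E})=0$ for all $\mc{G}$ in the category. The tool is the exact sequence $0\to\mc{K}\to\mc{P}\to\mc{G}\to0$ produced in the projective step, together with dimension shifting and Lemma~\ref{lem:local-global}(i), which reduces the local part of the vanishing to the corresponding statement over $T$.

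\emph{Main obstacle.} The hardest step is the local one: showing that syzygies (and cosyzygies) of special CM $T$-modules remain special, equivalently $\Ext^2_T(M,T)=0$ for $M\in\SCM(T)$ with respect to the chosen approximation. My first attempt is to compare with the known Frobenius structure of Iyama--Wemyss on $\SCM(T)$ and on $\SCM(R)$, transported along Lemma~\ref{lem:special_partial}. Concretely, I would choose $\pi$ so that its restriction to $T$ is a right $\add\wt{N_{f\circ g^c}}|_T$-approximation, whose kernel is special by their results. If that fails, I would enlarge $\mc{P}$ to $\add\wt{N_{f\circ g^c}}$, which is what the subsequent Frobenius theorem suggests.
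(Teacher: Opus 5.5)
Your projective step for $\CM(X)$ is correct and is essentially the paper's argument; the paper gets $\Ext^1_X(\mc{K},\omega_X)=0$ more directly from $\Ext^1_X(\wt{N_f}^{\oplus n},\omega_X)\to\Ext^1_X(\mc{K},\omega_X)\to\Ext^2_X(\mc{F},\omega_X)=0$. For $\SCM(X)$, however, covering by $\add\wt{N_f}$ cannot work. Since $\wt{N_f}|_T$ is free (Lemma \ref{lem:locallyfree}), $\mc{K}|_T$ is a free syzygy of $\mc{F}|_T$. So what you need is $\Ext^2_T(\mc{F}|_T,T)=0$, and this fails in general. Take $X=\Spec R$ with $R$ of type $\frac{1}{n}(1,1)$, $n\ge 3$. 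Then $\wt{N_f}=R$, and the kernel of $R^{\oplus 2}\to M_1$ is $M_{n-1}$, which is not special. Structurally, if these kernels always stayed in $\SCM(X)$, the cover of a projective object would split, forcing $\proj\SCM(X)\subseteq\add\wt{N_f}$. But $\proj\SCM(X)=\add\wt{N_{f\circ g^c}}$ is strictly larger as soon as $X$ has a singular point that is not a rational double point. Your fallback, a global $\pi$ from $\add\wt{N_{f\circ g^c}}$ whose restriction to $T$ is a right $\add N_{g_T^c}$-approximation, is left unproved: nothing in your argument shows that global morphisms into $\mc{F}$ restrict to enough morphisms over $T$.

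The injective step has a more basic flaw. A universal extension $0\to\mc{F}\to\mc{E}\to\mc{Q}^{\oplus m}\to 0$ only gives $\Ext^1_X(\mc{Q},\mc{E})=0$. This does not make $\mc{E}$ injective, and the step you defer ($\Ext^1_X(\mc{G},\mc{E})=0$ for all $\mc{G}$) is false in general. In $\SCM(X)$ your $\mc{Q}=\wt{N_{f\circ g^c}}$ is itself projective, by Lemmas \ref{lem:locallyfree} and \ref{lem:local-global}(i) together with $\proj\SCM(T)=\add N_{g_T^c}$. Hence $\Ext^1_X(\mc{Q},\mc{F})=0$ and $\mc{E}=\mc{F}$ for every $\mc{F}$. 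In $\CM(R)$ with $R$ of type $\frac{1}{3}(1,1)$, $\omega_R\cong M_1$ is special, so the universal extension of $R$ by $\omega_R$ is trivial; yet $R$ is not injective, because $\Ext^1_R(M_2,R)\neq 0$.

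The missing idea is duality. First show $\wt{N_f}\otimes\omega_X\in\inj\CM(X)$, using $\RHom_X(\mc{F},\wt{N_f}\otimes\omega_X)\cong\RHom_X(\wt{N_f}^\vee,\mcHom_X(\mc{F},\omega_X))$ and $\mcHom_X(\mc{F},\omega_X)\in\msr{T}_0\subset{}^{0}\Per(X/R)$, where $\wt{N_f}^\vee$ is a projective generator. Then apply $\mcHom_X(-,\omega_X)$ to a ${}^{0}\Per(X/R)$-cover $0\to\mc{K}'\to\wt{N_f}^{\vee\oplus n}\to\mcHom_X(\mc{F},\omega_X)\to 0$. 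This gives $0\to\mc{F}\to\wt{N_f}^{\oplus n}\otimes\omega_X\to\mcHom_X(\mc{K}',\omega_X)\to 0$ in $\CM(X)$. For $\SCM(X)$ the paper avoids local syzygy questions altogether. By Lemma \ref{lem:special_partial}, $\wt{N_{f\circ g}}$ is an additive generator of $\SCM(X)$, so $\SCM(X)$ is functorially finite and extension-closed in $\CM(X)$. Then \cite[Proposition 4.3]{MR3006691} transfers enough projectives and injectives from $\CM(X)$.
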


\begin{proof}
Note that $\CM (X)\subset \msr{T}_{-1}\subset {}^{-1}\Per (X/R)$. Hence, \eqref{eqn:projPer} implies that 
$\add \widetilde{N_f}\subset \proj \CM (X)$.

By the tilting equivalence \eqref{eqn:tilting}, given $\mc{F}\in \CM (X)$, there is a short exact sequence
\begin{equation}\label{eqn:SES_proj}
0\to \mc{K}\to \wt{N_f}^{\oplus n}\to \mc{F}\to 0
\end{equation}
in ${}^{-1}\Per (X/R)$. Taking the long exact sequence of cohomology sheaves, we see that $\mc{K}\in \msr{T}_{-1}$, and hence it is globally generated. Moreover, $\mc{K}$ is reflexive since both $\wt{N_f}^{\oplus n}$ and $\mc{F}$ are reflexive.  
Applying $\Hom_X(-,\omega_X)$ to \eqref{eqn:SES_proj} yields the long exact sequence:
\[
\cdots\to\Ext^1_X(\wt{N_f}^{\oplus n},\omega_X)\to \Ext_X^1(\mc{K},\omega_X)\to \Ext_X^2(\mc{F},\omega_X).
\]
Since the first and third terms vanish, $\mc{K}\in \CM (X)$. Hence, \eqref{eqn:SES_proj} is an exact sequence
in the exact category $\CM (X)$, which implies that $\CM (X)$ has enough projectives.

For any $\mc{F}\in \CM (X)$, we have
\begin{align*}
&\RHom_X(\mc{F},\wt{N_f}\otimes \omega_X)\\
\cong& \RHom_X(\mc{F}\otimes \wt{N_f}^\vee, \omega_X) \tag*{\cite[Proposition 5.16]{MR222093}}\\
\cong& \RHom_X(\wt{N_f}^\vee,\mcRHom_X(\mc{F},\omega_X)) \tag*{\cite[Proposition 5.15]{MR222093}}\\
\cong& \RHom_X(\wt{N_f}^\vee,\mcHom_X(\mc{F},\omega_X)).
\end{align*}
Since $\mb{R}^1f_*\mcHom_X(\mc{F},\omega_X)=0$, 
we see that $\mcHom_X(\mc{F},\omega_X)\in \msr{T}_0\subset {}^{0}\Per (X/R)$, and hence 
$\Ext_X^i(\wt{N_f}^\vee,\mcHom_X(\mc{F},\omega_X))=0$ for $i>0$. Therefore, we conclude that $\wt{N_f}\otimes \omega_X\in \inj \CM (X)$.

Furthermore, by the tilting equivalence \eqref{eqn:tilting_dual}, there is a short exact sequence
\begin{equation}\label{eqn:SES_inj}
0\to \mc{K}'\to \wt{N_f}^{\vee \oplus n}\to \mcHom_X(\mc{F},\omega_X)\to 0
\end{equation}
in ${}^{0}\Per (X/R)$. It follows that $\mc{K}'\in \msr{T}_{0}$. Moreover, $\mc{K}'$ is a reflexive sheaf since both $\wt{N_f}^{\vee \oplus n}$ and $ \mcHom_X(\mc{F},\omega_X)$ are reflexive. Applying $\mcHom_X(-,\omega_X)$ to \eqref{eqn:SES_inj}, we obtain a long exact sequence
\[
0\to \mc{F}\to \wt{N_f}^{\oplus n}\otimes \omega_X\to \mcHom_X(\mc{K}',\omega_X)
\to \mcExt_X^1(\mc{F},\omega_X),
\]
where the last term vanishes because $\mc{F}\in \CM (X)$.
Since $\wt{N_f}^{\oplus n}\otimes \omega_X$ is globally generated, its quotient 
$\mcHom_X(\mc{K}',\omega_X)$ is a globally generated reflexive sheaf.
Moreover, we have 
\[
\mb{R}^1f_*\mcHom_X(\mcHom_X(\mc{K}',\omega_X),\omega_X)\cong \mb{R}^1f_*\mc{K}'=0,
\] 
since $\mc{K}'\in \msr{T}_0$.
Thus, $\mcHom_X(\mc{K}',\omega_X)\in \CM (X)$, and therefore the short exact sequence obtained by applying $\mcHom_X(-,\omega_X)$ to \eqref{eqn:SES_inj} lies in $\CM (X)$. This implies that $\CM (X)$ has enough injectives. 

Since $N_{f\circ g}$ is an additive generator of $\SCM (R)$,
$\wt{N_{f\circ g}}$ is an additive generator of $\SCM (X)$ by Lemma \ref{lem:special_partial}. Hence, $\SCM(X)$ is a functorially finite subcategory of $\CM (X)$. Thus, 
\cite[Proposition 4.3]{MR3006691} implies that 
$\SCM (X)$ has enough projectives and injectives.
\end{proof}

\begin{theorem}\label{thm:Frob_SCM}
For the exact categories $\SCM(X)$ and $\CM(X)$, we have the following descriptions of their projective and injective objects:
\begin{enumerate}
    \item $\proj \SCM (X) = \inj \SCM (X) = \add \wt{N_{f\circ g^c}}$.
    \item $\proj \CM (X) = \add \wt{N_f}(=\mk{V}(X))$ and $\inj \CM (X) = \add (\wt{N_f}\otimes \omega_X)(=\mk{V}(X)\otimes \omega_X)$.
\end{enumerate}
Consequently, $\SCM (X)$ has a Frobenius structure.    
\end{theorem}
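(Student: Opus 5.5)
Part (ii) comes first, since the proof of Lemma \ref{lem:enough} has already done most of the work. There we saw $\add\wt{N_f}\subset\proj\CM(X)$ and $\wt{N_f}\otimes\omega_X\in\inj\CM(X)$, together with the sequences \eqref{eqn:SES_proj} and the dual of \eqref{eqn:SES_inj}. For the reverse inclusion, let $\mc{P}\in\proj\CM(X)$. The sequence \eqref{eqn:SES_proj} for $\mc{F}=\mc{P}$ splits, so $\mc{P}$ is a direct summand of $\wt{N_f}^{\oplus n}$. By Krull--Schmidt (Remark \ref{rem:CMSCM}(iii)) we get $\mc{P}\in\add\wt{N_f}$. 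The identity $\add\wt{N_f}=\mk{V}(X)$ is Lemma \ref{lem:locallyfree}. The injective case is the same argument: if $\mc{I}\in\inj\CM(X)$, the sequence $0\to\mc{I}\to\wt{N_f}^{\oplus n}\otimes\omega_X\to\mcHom_X(\mc{K}',\omega_X)\to0$ splits.

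For (i), set $N:=N_{f\circ g^c}$. Here $g^c$ denotes the morphism from the minimal resolution $Y$ to the canonical model $X^c$ of $f\circ g$. So $I_{f\circ g^c}$ consists of the curves that are not $(-2)$-curves, and the $(-2)$-curves are exactly the ones contracted by $g^c$. I would prove $\add\wt{N}\subset\proj\SCM(X)\cap\inj\SCM(X)$ by reducing to $R$ via Lemma \ref{lem:special_partial}.

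The key claim is that for $\mc{F},\mc{G}\in\SCM(X)$ one has $\Ext^1_X(\mc{F},\mc{G})\cong\Ext^1_R(f_*\mc{F},f_*\mc{G})$. I would argue this as follows.
\begin{itemize}
\item By Lemma \ref{lem:local-global}(i), $\Ext^1_X(\mc{F},\mc{G})\cong\Ext^1_T(\mc{F}|_T,\mc{G}|_T)$, and this is a local group concentrated at the singular points of $X$.
\item Alternatively, run the Claim in Lemma \ref{lem:special_partial} with $\mc{G}$ in place of $\mc{O}_X$. The relevant vanishing is $\Ext^1_Y(\tor,\wt{\mc{G}}^Y)=0$, which should follow from Lemma \ref{lem:vanishing_extension} because $\wt{\mc{G}}^Y$ is locally free and globally generated.
\end{itemize}
Granting this, a conflation in $\SCM(X)$ pushes forward to an exact sequence in $\SCM(R)$, since $\mb{R}^1f_*$ vanishes on globally generated sheaves. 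So the exact structure on $\SCM(X)$ corresponds to the sub-exact structure on $\SCM(R)$ of sequences that stay exact under $\wt{(-)}^X$. In particular the projective-injective objects of $\SCM(R)$ with its full exact structure give projective-injectives of $\SCM(X)$. By \cite{MR3006691} these are exactly $\add N$.

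The reverse inclusion is the main obstacle. I need to show that an indecomposable $\wt{M_i}$ with $E_i$ a $(-2)$-curve is neither projective nor injective in $\SCM(X)$. If $i\in I_f$, then $\wt{M_i}$ is locally free, so it is projective in $\CM(X)$ by (ii) and hence projective in $\SCM(X)$. For such $i$ the task is therefore to show that $\wt{M_i}$ fails to be injective, or else to explain why projectivity forces $E_i$ not to be a $(-2)$-curve. One way to see this: if $\wt{M_i}$ were projective-injective in $\SCM(X)$, then $\wt{M_i}\otimes\omega_X$ relates to $\wt{M_i}$ via the injective description in (ii). For a $(-2)$-curve, $\omega\cdot E_i=0$ makes the relevant Auslander--Reiten translate $\tau\wt{M_i}$ nonzero in $\SCM$.

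Concretely, I would construct a nonsplit almost split sequence $0\to\tau\wt{M_i}\to\mc{E}\to\wt{M_i}\to0$ in $\SCM(X)$ from \cite[Proposition 4.3]{MR3006691}, using the AR-duality $\tau=(-\otimes\omega)^{\vee\vee}$ restricted to $\SCM$. The aim is to show that it is nonsplit exactly when $c_1(\omega_Y)\cdot E_i=0$. The first thing I would try is to compute $\Ext^1_X(\wt{M_i},(\wt{M_i}\otimes\omega_X)^{\vee\vee})$ via Lemma \ref{lem:local-global} and the Chern class identity \eqref{eqn:Wunram_type}.

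Once (i) is established, $\proj\SCM(X)=\inj\SCM(X)$. Combined with Lemma \ref{lem:enough}, this shows that $\SCM(X)$ is Frobenius.
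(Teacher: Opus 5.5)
\textbf{Genuine gap in part (i).} Your part (ii) is fine and is the paper's argument: the sequence \eqref{eqn:SES_proj} splits, and so does the sequence obtained by dualizing \eqref{eqn:SES_inj}.

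Part (i) starts from a misreading of $g^c$. In the paper, $g^c$ is the canonical model of $g\colon Y\to X$, not of $f\circ g$. It contracts only the $(-2)$-curves among $\{E_i\}_{i\in I_g}$, so
$I_{f\circ g^c}=I_f\cup\{i\in I_g\mid E_i^2\neq -2\}$. This is why $g^c=\id_X$ once $X$ has rational double points.

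With your reading the statement is false, and your own observation shows it. For $i\in I_f$, the locally free sheaf $\wt{M_i}$ is projective in $\SCM(X)$ by (ii). It is also injective: for $\mc{G}\in\SCM(X)$, Lemmas \ref{lem:local-global} and \ref{lem:locallyfree} give $\Ext^1_X(\mc{G},\wt{M_i})\cong\Ext^1_T(\mc{G}|_T,T^{\oplus \rank M_i})=0$. For example, if $R$ is of type $A_2$ and $X$ contracts one of the two curves, then $\wt{M_1}$ is a projective-injective line bundle, whereas your $\add\wt{N}$ is only $\add\mc{O}_X$. So the almost split sequence ending at $\wt{M_i}$ that you plan to construct cannot exist.

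\textbf{Your key claim is false.} The claim $\Ext^1_X(\mc{F},\mc{G})\cong\Ext^1_R(f_*\mc{F},f_*\mc{G})$ fails. Take $\mc{F}=\wt{M_i}$ with $i\in I_f$: the left side vanishes by (ii). But when $R$ is a rational double point, the almost split sequence gives $\Ext^1_R(M_i,\tau M_i)\neq 0$.

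The vanishing $\Ext^1_Y(\tor(f\circ g)^*M,\wt{\mc G}^Y)=0$ in your second bullet is not supplied by Lemma \ref{lem:vanishing_extension}, which is specific to the target $\mc{O}_Z$. Indeed, $\Ext^1_Y(\mc{O}_E(-1),\mc{V})\cong H^0(E,\mc{V}|_E\otimes\mc{O}_E(-1))\neq 0$ for a $(-2)$-curve $E$ with $\mc{V}\cdot E>0$.

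Only an injection $\Ext^1_X\hookrightarrow\Ext^1_R$ holds. This is exactly why Section 5 needs the smaller exact structure $\SCM_{N_f}(R)$. That injection does give $\add\wt{N_{(f\circ g)^c}}\subset\proj\SCM(X)\cap\inj\SCM(X)$, but this is in general a proper subset of the answer.

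\textbf{The missing idea: compare with $T$, not with $R$.}
\begin{itemize}
\item By Lemma \ref{lem:local-global}(i), $\Ext^1$ between objects of $\SCM(X)$ equals $\Ext^1$ between their restrictions in $\SCM(T)$.
\item By Lemma \ref{lem:locallyfree}, every object of $\SCM(T)$ is such a restriction up to free summands, and free summands carry no $\Ext^1$ from $\SCM(T)$.
\item Hence $\mc{F}\in\proj\SCM(X)$ if and only if $\mc{F}|_T\in\proj\SCM(T)=\add N_{g_T^c}$. This is \cite[Theorem 4.5]{MR3006691} applied to $T$, whose exceptional curves are $\{E_i\}_{i\in I_g}$.
\item Since $\wt{M_i}|_T$ is free for $i\in I_f$, and is $M'_i$ plus free summands for $i\in I_g$, this set is exactly $\add\wt{N_{f\circ g^c}}$.
\item The equality $\proj\SCM(X)=\inj\SCM(X)$ then follows from the symmetry $\Ext^1_X(\mc{F},\mc{G})\cong\Ext^1_X(\mc{G},\mc{F})$ on $\SCM(X)$. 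This symmetry is transported from $\SCM(T)$ via \cite[Lemma 4.4]{MR3006691} and Lemma \ref{lem:local-global}(i); no Auslander--Reiten computation is needed.
\end{itemize}
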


\begin{proof}
(i) Lemma \ref{lem:locallyfree} implies that for any $M_i'\in \SCM (T)$ (respectively $M_i'\in \add N_{g_T^c}$), there is an $M_i\in \SCM (X)$ (respectively, $M_i\in \add \wt{N_{f\circ g^c}}$) such that $M_i|_T$ is isomorphic to $M_i'$ up to free summands.
On the other hand, 
it follows from \cite[Theorem 4.5]{MR3006691} that $\proj \SCM (T)=\add N_{g_T^c}$.
Moreover, Lemma \ref{lem:local-global} implies that 
$\mc{F}|_T\in \proj \SCM (T)$ for any $\mc{F}\in \proj \SCM (X)$. Therefore, we conclude that $\proj \SCM (X)\subset \add \wt{N_{f\circ g^c}}$. The other direction follows from Lemma \ref{lem:local-global} and the fact that $\wt{N_{f\circ g^c}}|_T\cong N_{g_T^c}$ up to free direct summands. 

By \cite[Lemma 4.4]{MR3006691} and Lemma
\ref{lem:local-global}(i), we obtain an isomorphism 
\[
\Ext_X^1(\mc{F},\mc{G})\cong\Ext_X^1(\mc{G},\mc{F})
\]
for $\mc{F},\mc{G}\in \SCM (X).$
Then the remaining part follows.

(ii) For any $\mc{F}\in \proj \CM (X)$,
consider the exact sequence \eqref{eqn:SES_proj} in $\CM (X)$. 
Then since it splits, we conclude $\mc{F}\in \add \wt{N_f}$, which implies 
$\proj \CM (X)\subset \add \wt{N_f}$. 
The other direction was shown in the proof of Lemma \ref{lem:enough}.

The proof for $\inj \CM (X)$ is similar. 
\end{proof}

We denote the stable category of $\SCM (X)$ by 
\[\underline{\SCM}(X).\]

\begin{remark}\label{rem:IY}
\begin{enumerate}
    \item 
The category $\SCM(R)$ is an exact category whose exact structure is inherited from the abelian category $\mod R$.
Iyama and Wemyss show in \cite{MR3006691} that $\SCM(R)$ has a Frobenius structure with $\proj \SCM(R)=\add N_{(f\circ g)^c}$.
When $f=\id$ (in other words, $X=\Spec R$), the Frobenius structure on $\SCM(X)=\SCM(R)$ given in Theorem \ref{thm:Frob_SCM} coincides with the one given in \cite{MR3006691}.
The stable category associated with this structure is denoted by $\underline{\underline{\SCM}}(R)$ in \cite{MR3006691}, but we use the notation $\underline{\SCM}(R)$.
\item 
It is natural to ask when $\CM (X)$ is a Frobenius category. If this is the case, we have $\proj \CM (X)=\inj \CM (X)$. Then, it follows that $\omega_X$ is an invertible sheaf, and $\omega_X^{-1}\in \add \wt{N_f}$. Thus, both $\omega_X$ and $\omega_X^{-1}$ are globally generated invertible sheaves. Hence, $\omega_X\cong \mc{O}_X$, or equivalently, $\omega_R\cong R$ holds. Consequently, $R$ has at worst a rational double point.
\item 
It is also natural to ask when $\CM (X)=\SCM (X)$. By (ii), this is the case if and only if $R$ has a rational double point. Note also that even when $X$ has at worst rational double points, $\CM (X)$ and $\SCM (X)$ do not necessarily coincide.
\end{enumerate}
\end{remark}

We use the following lemma, which  
is standard for Frobenius categories, to show Proposition \ref{prop:SCMXCMT}. For the convenience of the reader, we include a proof of Lemma \ref{lem:general_result}.
 
\begin{lemma}\label{lem:general_result}
Let $\msr{A}$ be an abelian category and $\msr{C}$ be an extension-closed additive full subcategory of $\msr{A}$. Assume that $\msr{C}$, endowed with the exact structure inherited from $\msr{A}$, is a Frobenius category, and denote its stable category by $\underline{\msr{C}}$. Then we have an isomorphism of abelian groups $\Hom_{\underline{\msr{C}}}(X,Y[1])\cong \Ext^1_\msr{A}(X,Y)$ for $X,Y\in \msr{C}$.  
\end{lemma}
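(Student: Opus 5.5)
The plan is to use the standard construction of the suspension functor in the stable category of a Frobenius category, and to compare the resulting Hom group with Yoneda $\Ext^1$ in $\msr{A}$. Since $\msr{C}$ is extension-closed, every extension $0\to Y\to E\to X\to 0$ in $\msr{A}$ with $X,Y\in\msr{C}$ has $E\in\msr{C}$. Hence $\Ext^1_{\msr{A}}(X,Y)$ coincides with the group $\Ext^1_{\msr{C}}(X,Y)$ of conflations in $\msr{C}$ modulo equivalence, with Baer sum. It therefore suffices to show $\Hom_{\underline{\msr{C}}}(X,Y[1])\cong\Ext^1_{\msr{C}}(X,Y)$.

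First, fix for $Y$ a conflation $0\to Y\xrightarrow{\iota} I\xrightarrow{\pi} Y[1]\to 0$ with $I$ projective-injective; this defines $Y[1]$ up to canonical isomorphism in $\underline{\msr{C}}$. Define $\Phi\colon \Hom_{\msr{C}}(X,Y[1])\to \Ext^1_{\msr{C}}(X,Y)$ by pulling this conflation back along a morphism $u\colon X\to Y[1]$. By the standard properties of pullbacks, $\Phi$ is additive.

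\textbf{Surjectivity.} Take a class $0\to Y\to E\to X\to 0$. Because $I$ is injective in $\msr{C}$, the map $Y\to I$ extends to $E\to I$. This induces a morphism of conflations, and hence a map $u\colon X\to Y[1]$ whose pullback is the given extension.

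\textbf{Kernel.} I claim $\ker\Phi$ is exactly the set of morphisms factoring through a projective-injective object. If $\Phi(u)$ splits, then $u$ lifts through $\pi$ to $X\to I$, so $u$ factors through $I$. Conversely, suppose $u$ factors through a projective-injective object $P$. Since $P$ is projective, $P\to Y[1]$ lifts along the deflation $\pi$, so $u$ itself lifts through $\pi$, and the pullback splits.

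Hence $\Phi$ descends to an isomorphism $\underline{\Hom}(X,Y[1])\cong \Ext^1_{\msr{C}}(X,Y)=\Ext^1_{\msr{A}}(X,Y)$. Finally, I will check that $\underline{\Hom}(X,Y[1])$, computed with this chosen $I$, is $\Hom_{\underline{\msr{C}}}(X,Y[1])$ as defined in the stable category. This is the usual independence of the choice of injective hull up to canonical isomorphism in $\underline{\msr{C}}$, and I regard it as routine.

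The only mildly delicate point is verifying that the pullback/pushout arguments take place inside $\msr{C}$ with its inherited exact structure. Extension-closedness ensures that pullbacks of conflations along maps in $\msr{C}$ are again conflations in $\msr{C}$: the pullback object is an extension of $X$ by $Y$ in $\msr{A}$, so it lies in $\msr{C}$. Everything else is formal.
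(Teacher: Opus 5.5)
Your proposal is correct and is essentially the paper's argument. Your pullback map $\Phi$ is the connecting homomorphism that the paper obtains by applying $\Hom_{\msr{C}}(X,-)$ to $0\to Y\to I\to Y[1]\to 0$. Your surjectivity step via injectivity of $I$ is the paper's $\Ext^1_{\msr{A}}(X,I)=0$, and your kernel computation, which lifts through $\pi$ using projectivity of $P$, is the paper's observation that every morphism factoring through a projective-injective already factors through $I$.
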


\begin{proof} 
First, note that $\Hom_{\msr{A}}(X, Y)= \Hom_{\msr{C}}(X, Y)$ and $\Ext^1_{\msr{A}}(X, Y) = \Ext^1_{\msr{C}}(X, Y)$ because the exact structure on $\msr{C}$ is induced by that of $\msr{A}$ and $\msr{C}$ is extension-closed. 
By the definition of the shift functor in the stable category, we have an admissible short exact sequence
\[
0 \to Y \to I \to Y[1] \to 0,
\]
where $I$ is an injective-projective object in $\msr{C}$. Applying the functor $\Hom_{\msr{C}}(X, -)$ yields an exact sequence
\[
\Hom_{\msr{C}}(X, I) \xrightarrow{\pi} \Hom_{\msr{C}}(X, Y[1]) \to \Ext^1_{\msr{A}}(X, Y) \to \Ext^1_{\msr{A}}(X, I).
\]
The extension group $\Ext^1_{\msr{A}}(X, I)$ vanishes because $I$ is injective in $\msr{C}$. Thus, we have
\[
\Ext^1_{\msr{A}}(X, Y) \cong \Hom_{\msr{C}}(X, Y[1]) / \Im \pi.
\]
By the definition of the stable category $\underline{\msr{C}}$, the morphism space $\Hom_{\underline{\msr{C}}}(X, Y[1])$ is defined as the quotient of $\Hom_{\msr{C}}(X, Y[1])$ by the ideal of morphisms factoring through injective-projective objects. Since any morphism factoring through an arbitrary injective-projective object also factors through $I$, the image $\Im \pi$ is precisely this ideal. Thus, we obtain the desired isomorphism.
\end{proof}

\begin{proposition}\label{prop:SCMXCMT} For $\iota$ in \eqref{eqn:resolution_diagram},
the pullback functor $\iota ^*$ induces a
 triangle equivalence
\[\iota^*\colon \underline{\SCM}(X)\to  \underline{\SCM}(T)\qquad \mc{F}\mapsto \mc{F}|_T.\]    
\end{proposition}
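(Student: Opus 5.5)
The approach is to show that $\iota^*$ is an exact functor of Frobenius categories $\SCM(X)\to\SCM(T)$ that sends projective-injectives to projective-injectives. Then I check that the induced triangle functor on stable categories is fully faithful and essentially surjective.

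\emph{Well-definedness and exactness.} By Lemma \ref{lem:local-global}(ii), $\iota^*$ sends $\SCM(X)$ to $\SCM(T)$. Since $\iota$ is flat, it sends short exact sequences to short exact sequences. By Theorem \ref{thm:Frob_SCM}(i), the projective-injectives of $\SCM(X)$ form $\add\wt{N_{f\circ g^c}}$. In the proof of that theorem we saw that $\wt{N_{f\circ g^c}}|_T\cong N_{g_T^c}$ up to free summands, and $\proj\SCM(T)=\add N_{g_T^c}$ by \cite[Theorem 4.5]{MR3006691}. Hence $\iota^*$ preserves projective-injectives. An exact functor between Frobenius categories preserving projective-injectives induces a triangle functor on stable categories, because it preserves the defining sequences $0\to Y\to I\to Y[1]\to 0$ and the standard triangles.

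\emph{Full faithfulness.} Rather than work with stable Hom directly, I use the shift. Let $\mc{F},\mc{G}\in\SCM(X)$. Since $\underline{\SCM}(X)$ is triangulated, $\mc{G}\cong \mc{G}'[1]$ for some $\mc{G}'\in\SCM(X)$, namely the kernel of an injective envelope-type surjection $I\to\mc{G}$ in $\SCM(X)$. Lemma \ref{lem:general_result} then gives
\[
\Hom_{\underline{\SCM}(X)}(\mc{F},\mc{G})\cong \Ext^1_X(\mc{F},\mc{G}'),\qquad \Hom_{\underline{\SCM}(T)}(\mc{F}|_T,\mc{G}|_T)\cong \Ext^1_T(\mc{F}|_T,\mc{G}'|_T).
\]
The second isomorphism uses that $\iota^*$ commutes with the shift, by the previous paragraph. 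Since $\mc{G}'$ is globally generated and $\mc{F}\in\SCM(X)$, Lemma \ref{lem:local-global}(i) shows that the restriction map $\epsilon$ is an isomorphism. The main point to check is compatibility: the map on stable Hom induced by $\iota^*$ must correspond to $\epsilon$ under these identifications. This holds because both are defined by pulling back the connecting morphism of $0\to\mc{G}'\to I\to\mc{G}\to0$, and $\iota^*$ is exact. Hence $\iota^*$ is fully faithful on stable categories.

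\emph{Essential surjectivity.} This is the step I expect to need the most care. Every object of $\underline{\SCM}(T)$ is a finite direct sum of the indecomposables $M_i'$ with $i\in I_g$, up to free summands, which vanish stably. Lemma \ref{lem:locallyfree} gives $\wt{M_i}^X|_T\cong M_i'\oplus F_i$ with $F_i\in\add T$. So $\iota^*\wt{M_i}^X\cong M_i'$ in $\underline{\SCM}(T)$, and taking direct sums gives essential surjectivity. A fully faithful, essentially surjective triangle functor is a triangle equivalence, which proves the proposition.
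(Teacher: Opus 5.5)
Your proposal is correct and follows essentially the same route as the paper. Both use Lemma \ref{lem:local-global}(ii), together with preservation of projective-injectives, to get the triangle functor; Lemmas \ref{lem:local-global}(i) and \ref{lem:general_result} for full faithfulness; and Lemma \ref{lem:locallyfree} for essential surjectivity. You additionally spell out details the paper leaves implicit, such as realizing $\mc{G}\cong\mc{G}'[1]$ through the syzygy sequence $0\to\mc{G}'\to I\to\mc{G}\to 0$ and checking that the map on stable Homs agrees with the restriction map $\epsilon$.
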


\begin{proof}
By Lemma \ref{lem:local-global}(ii), the flat morphism $\iota$ induces the triangle functor 
$\iota^*$.  
Note that $\iota^*$ is essentially surjective by Lemma \ref{lem:locallyfree}.
Lemmas \ref{lem:local-global}(i) and \ref{lem:general_result} ensure the full faithfulness of $\iota^*$. 
\end{proof}


\section{An equivalence between stable categories of special Cohen--Macaulay sheaves}
Throughout this section, assume that $X$ has rational double points. 
Consequently, $g^c=\id_X$. 
This assumption is essential to equip $\SCM_{N_f} (R)$ with the Frobenius structure introduced in \cite{MR3320570}. Under this condition, Theorem \ref{thm:Frob_SCM} implies that $N_{f\circ g^c} = N_f$, reflecting the fact that Theorem \ref{thm:DX} serves precisely as a categorification of the three-way correspondence illustrated in diagram \eqref{eqn:wunram-type}.

\subsection{Frobenius structures introduced by Iyama--Kalck--Wemyss--Yang}
Following \cite{MR3320570}, we recall an exact structure on $\SCM (R)$. 
For objects $L_1, L_2, L_3 \in \SCM(R)$, consider a short exact sequence
\begin{equation}\label{eqn:L123}
0\to L_1\stackrel{\alpha}\to L_2 \stackrel{\beta}\to L_3 \to 0
\end{equation}
such that $\Hom_R(N_f, \beta)$ is surjective. Such sequences constitute the (admissible) exact sequences in the exact category $\SCM(R)$.
We denote this exact category by \[\SCM_{N_f}(R).\]
It is shown in \cite{MR3320570} that $\SCM_{N_f}(R)$ has a Frobenius structure with $\proj \SCM_{N_f} (R)=\add N_f$. We denote its stable category by
\[\underline{\SCM}_{N_f}(R).\]

\begin{remark}
In our context, we consider three exact categories: $\SCM_{N_f}(R)$, $\SCM(X)$, and $\CM (R)$. While the exact structures of $\SCM(X)$ and $\CM (R)$ are inherited from $\Coh (X)$ and $\mod R$, respectively, that of $\SCM_{N_f}(R)$ is not inherited from $\mod R$ if $f\ne f^c$.

In the case where $f=f^c$, it follows from \cite[Theorem 4.5]{MR3006691} that $\Ext^i_R(N_{f^c},L)=0$ for all $i>0$ and $L\in\SCM (R)$. Thus, for a given short exact sequence 
\[0\to L_1\to L_2\stackrel{\beta}\to L_3\to 0\]
for $L_i\in \SCM (R)$,
$\Hom_R(N_{f^c},\beta)$ is always surjective.
It turns out that the exact structure of $\SCM_{N_{f^c}}(R)$ is inherited from $\mod R$. Consequently, by definition, $\underline{\SCM}_{N_{f^c}} (R)=\underline{\SCM} (R)$.
\end{remark}

We obtain the following categorification of the diagram \eqref{eqn:wunram-type}.


\begin{theorem}\label{thm:DX}
The assignment $M \mapsto \wt{M}$ yields an equivalence of exact categories
\[
\SCM_{N_f} (R)\xrightarrow{\sim} \SCM (X)
\]
with quasi-inverse $f_*$, which restricts to an equivalence $\add N_f\xrightarrow{\sim} \mk{V}(X)$. 
Consequently, this equivalence induces a triangle equivalence 
    \[
    \underline{\SCM}_{N_f}(R)\xrightarrow{\sim} \underline{\SCM} (X).
    \]
\end{theorem}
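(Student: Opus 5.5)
Lemma \ref{lem:special_partial} already provides an additive equivalence $\SCM(R)\simeq\SCM(X)$, $M\mapsto\wt{M}$, with quasi-inverse $f_*$. So the work is entirely about exact structures. I will show that a sequence $0\to L_1\xrightarrow{\alpha}L_2\xrightarrow{\beta}L_3\to 0$ in $\SCM(R)$ is admissible in $\SCM_{N_f}(R)$ if and only if the induced sequence $0\to\wt{L_1}\to\wt{L_2}\to\wt{L_3}\to 0$ is exact in $\Coh(X)$. The converse will also be needed: $f_*$ sends short exact sequences in $\SCM(X)$ to admissible ones.

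For the direction from $X$ to $R$, let $0\to\mc{F}_1\to\mc{F}_2\to\mc{F}_3\to 0$ be exact in $\SCM(X)$. Since $\mb{R}^1f_*\mc{F}_1=0$ (it is globally generated), applying $f_*$ gives an exact sequence of $R$-modules. To get surjectivity of $\Hom_R(N_f,\beta)$, I use the isomorphisms $\Hom_R(M_i,f_*\mc{F})\cong\Hom_X(\wt{M_i},\mc{F})$ for $M_i\in\add N_f$. These come from the equivalence, or from adjunction together with the fact that $\mc{F}$ is torsion-free. The cokernel of $\Hom_X(\wt{N_f},\mc{F}_2)\to\Hom_X(\wt{N_f},\mc{F}_3)$ lies in $\Ext^1_X(\wt{N_f},\mc{F}_1)$. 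This group vanishes because $\wt{N_f}$ is projective in $\CM(X)$ by Theorem \ref{thm:Frob_SCM}(ii). More directly, $\wt{N_f}$ is locally free, so $\Ext^1_X(\wt{N_f},\mc{F}_1)=H^1(X,\wt{N_f}^\vee\otimes\mc{F}_1)=0$ by Lemma \ref{lem:surjection}, using $\mb{R}^1f_*\wt{N_f}^\vee=0$, which holds since $\wt{N_f}\in\SCM(X)$.

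For the direction from $R$ to $X$, take an admissible sequence in $\SCM_{N_f}(R)$. Right exactness of $f^*$ gives $f^*L_1\to f^*L_2\to f^*L_3\to 0$. Passing to $\wt{(-)}$, the map $\wt{L_2}\to\wt{L_3}$ is surjective, and the composite $\wt{L_1}\to\wt{L_3}$ is zero since it vanishes generically and the target is torsion-free. Let $\mc{K}$ be the kernel of $\wt{L_2}\to\wt{L_3}$. Then $\mc{K}$ is reflexive, and $f_*\mc{K}=L_1$ by left exactness of $f_*$ and Lemma \ref{lem:pullback}(ii). The key step is to show that $\mc{K}$ is globally generated, hence lies in $\CM(X)$ (with $\Ext$-vanishing from the long exact sequence), so that $\mc{K}\cong\wt{f_*\mc{K}}=\wt{L_1}$ by Lemma \ref{lem:pushforward}(ii).

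For global generation I use the criterion that $\mc{K}\in\msr{T}_{-1}$ iff $\mb{R}^1f_*\mc{K}=0$ and $\Hom(\mc{K},\msr{C})=0$. The tilting description will be convenient here. Applying $\RHom_X(\wt{N_f},-)$ to $0\to\mc{K}\to\wt{L_2}\to\wt{L_3}\to 0$ gives the long exact sequence
\[
\Hom_X(\wt{N_f},\wt{L_2})\to\Hom_X(\wt{N_f},\wt{L_3})\to\Ext^1_X(\wt{N_f},\mc{K})\to\Ext^1_X(\wt{N_f},\wt{L_2})=0 .
\]
The first map is $\Hom_R(N_f,\beta)$, which is surjective by admissibility. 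Hence $\Ext^1_X(\wt{N_f},\mc{K})=0$, and the higher $\Ext$ groups vanish for dimension reasons. So $\RHom_X(\wt{N_f},\mc{K})$ is a module concentrated in degree $0$, which means $\mc{K}\in{}^{-1}\Per(X/R)$. Being a sheaf, $\mc{K}$ then lies in $\msr{T}_{-1}$, so it is globally generated.

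This step is the main obstacle. The rest is formal: $\Ext^1_X(\mc{K},\mc{O}_X)=0$ follows because $\wt{L_1}\in\SCM(X)$ once $\mc{K}\cong\wt{L_1}$ is known. The restriction $\add N_f\simeq\mk{V}(X)$ is Lemma \ref{lem:locallyfree}. Finally, an exact equivalence of Frobenius categories matches projectives ($\add N_f$ and $\mk{V}(X)=\add\wt{N_f}$, using $g^c=\id$ in Theorem \ref{thm:Frob_SCM}), and therefore induces a triangle equivalence of stable categories.
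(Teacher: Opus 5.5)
Your proof is correct. It uses the same ingredients as the paper's: Lemma~\ref{lem:special_partial}, the natural isomorphism $\Hom_R(N_f,L)\cong\Hom_X(\wt{N_f},\wt{L})$, the tilting bundle $\wt{N_f}$ with heart ${}^{-1}\Per(X/R)$, and $\mb{R}^1f_*\mc{F}_1=0$ for the converse. The key step from $R$ to $X$ is organized differently, however.

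The paper never forms a kernel. Admissibility makes $0\to\Hom_X(\wt{N_f},\wt{L_1})\to\Hom_X(\wt{N_f},\wt{L_2})\to\Hom_X(\wt{N_f},\wt{L_3})\to 0$ a short exact sequence of $\End_X(\wt{N_f})$-modules. The tilting equivalence \eqref{eqn:tilting} carries this back to a short exact sequence in the heart ${}^{-1}\Per(X/R)$. Since all three terms are sheaves, the long exact sequence of cohomology sheaves shows it is exact in $\Coh(X)$.

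You instead stay in $\Coh(X)$ throughout. You take $\mc{K}=\Ker(\wt{L_2}\to\wt{L_3})$ and use admissibility only to get $\Ext^1_X(\wt{N_f},\mc{K})=0$. You use tilting only to recognize that $\mc{K}\in\msr{T}_{-1}$. You then conclude $\mc{K}\cong\wt{f_*\mc{K}}=\wt{L_1}$ from Lemma~\ref{lem:pushforward}(ii).

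The paper's route is shorter. Yours isolates exactly what admissibility contributes. For any short exact sequence in $\SCM(R)$, your $\mc{K}$ is reflexive with $f_*\mc{K}=L_1$ and $\mb{R}^1f_*\mc{K}=0$; admissibility is precisely the condition that forces $\mc{K}$ to be globally generated, hence equal to $\wt{L_1}$. Your argument also supplies the explicit vanishing $\Ext^1_X(\wt{N_f},\mc{F}_1)=0$ behind the direction from $X$ to $R$, which the paper leaves as ``easily seen.''
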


\begin{proof}
First of all, note that 
\begin{equation}\label{eqn:N=wtN}
\Hom_R(L,M)\cong\Hom_R(L,f_*\widetilde{M})
\cong\Hom_X(f^*L,\widetilde{M})
\cong\Hom_X(\widetilde{L},\widetilde{M}),
\end{equation}
which implies that the additive functor $\widetilde{(-)}$ is fully faithful.
Lemma \ref{lem:special_partial} yields that 
$\widetilde{(-)}$ and $f_*$ give an equivalence of additive categories.
The statement on $\add N_f\xrightarrow{\sim} \mk{V}(X)$ follows from Lemma \ref{lem:locallyfree}.

Given an exact sequence \eqref{eqn:L123}, 
we obtain a short exact sequence
\[
0\to \Hom_X (\wt{N_f},\wt{L_1})\to \Hom_X (\wt{N_f},\wt{L_2}) \to \Hom_X (\wt{N_f},\wt{L_3}) \to 0
\]
by \eqref{eqn:N=wtN}.
Since $\wt{L_i}\in {}^{-1}\Per (X/R)$, we obtain a short exact sequence
\[ 
0\to \wt{L_1}\to \wt{L_2} \to \wt{L_3} \to 0.
\]

On the other hand, suppose that we have a short exact sequence
\[
0\to \mc{F}_1 \to \mc{F}_2\to \mc{F}_3 \to 0
\]
in $\Coh (X)$ with $\mc{F}_i\in \SCM (X)$.
Then, since $\mb{R}^1f_*\mc{F}_1=0$, we obtain a short exact sequence
\[
0\to f_*\mc{F}_1 \to f_*\mc{F}_2\to f_*\mc{F}_3 \to 0,
\]
which is easily seen to be an admissible exact sequence in $\SCM_{N_f}(R)$. This completes the proof of the equivalence of exact categories. The remaining assertions follow from this and the last part of Lemma \ref{lem:locallyfree}.
\end{proof}


\subsection{Equivalence between the triangulated categories $\underline{\SCM}_{N_f}(R)$ and $\underline{\CM}(T)$}
The existence of an equivalence between the triangulated categories $\underline{\SCM}_{N_f}(R)$ and $\underline{\CM}(T)$ was established in characteristic $0$ in \cite[Theorem 1.4]{MR3320570}.
Using Theorem \ref{thm:DX}, we provide a more geometric proof (in arbitrary characteristic) than that of \cite{MR3320570}. Furthermore, in \S \ref{subsec:comparison}, we remark that the equivalence constructed here is naturally isomorphic to theirs.

We define the composed triangle functor
\[\Phi := \iota^* \circ \wt{(-)} \colon \underline{\SCM}_{N_f}(R) \to \underline{\CM}(T), \quad M \mapsto \wt{M}|_T.
\]

Combining Proposition \ref{prop:SCMXCMT} with Theorem \ref{thm:DX}, we obtain the following.

\begin{corollary}[{\cite[Theorem 1.4]{MR3320570}}]\label{cor:IKWY}
The functor $\Phi\colon \underline{\SCM}_{N_f}(R) \to \underline{\CM}(T)$ is a triangle equivalence.    
\end{corollary}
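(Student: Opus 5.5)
The plan is to write $\Phi$ as the composite of two triangle equivalences that are already available, and then to reconcile the target: Proposition \ref{prop:SCMXCMT} lands in $\underline{\SCM}(T)$, while the corollary is stated with $\underline{\CM}(T)$.

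\textbf{The two factors.} By Theorem \ref{thm:DX}, the assignment $M\mapsto \wt{M}$ gives a triangle equivalence $\underline{\SCM}_{N_f}(R)\xrightarrow{\sim}\underline{\SCM}(X)$. This uses our standing assumption that $X$ has rational double points, which gives $g^c=\id_X$ and $N_{f\circ g^c}=N_f$. By Proposition \ref{prop:SCMXCMT}, restriction $\iota^*$ gives a triangle equivalence $\underline{\SCM}(X)\xrightarrow{\sim}\underline{\SCM}(T)$. The composite is exactly $\Phi$ on objects and morphisms, so it remains to identify $\underline{\SCM}(T)$ with $\underline{\CM}(T)$.

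\textbf{Identifying the target.} Since $X$ has only rational double points, $T=\prod_s\widehat{\mc{O}_{X,p_s}}$ is a finite product of complete local rational double points. These are Gorenstein, so $\omega_T\cong T$. Hence, for a reflexive $T$-module $M$, we have $\Ext^1_T(M,T)=\Ext^1_T(M,\omega_T)=0$, so $\SCM(T)=\CM(T)$ as full subcategories of $\mod T$. Both carry the exact structure inherited from $\mod T$.

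It remains to check that the Frobenius structures agree. For $\CM(T)$ over a Gorenstein ring, the projective-injective objects are $\add T$. For $\SCM(T)$, Theorem \ref{thm:Frob_SCM}, or rather \cite[Theorem 4.5]{MR3006691} applied to $T$, gives $\proj\SCM(T)=\add N_{g_T^c}$. Because $T$ is already Gorenstein, its minimal resolution $g_T$ has only $(-2)$-curves, so the canonical model is $\Spec T$ itself. Thus $N_{g_T^c}=T$, consistent with the Remark following the definition of $\SCM_{N_f}(R)$ (the case $f=f^c$). The two stable categories therefore coincide, with the same suspension, namely cosyzygy with respect to $\add T$, and the same triangles.

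\textbf{Main obstacle.} The only delicate point is this last identification $\proj\SCM(T)=\add T$. It amounts to checking that, for a rational double point, the index set $I_{g^c_T}$ is empty. That holds because every exceptional curve of $g_T$ is a $(-2)$-curve and is therefore contracted by the canonical model. Once this is settled, the corollary follows directly from Theorem \ref{thm:DX} and Proposition \ref{prop:SCMXCMT}.
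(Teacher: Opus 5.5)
Your proposal is correct and matches the paper's proof, which composes Theorem \ref{thm:DX} with Proposition \ref{prop:SCMXCMT}. The paper leaves implicit the identification $\underline{\SCM}(T)=\underline{\CM}(T)$ via $T$ being Gorenstein, which you spell out. Once $\SCM(T)=\CM(T)$ as the same exact subcategory of $\mod T$, the projective-injective objects coincide automatically, so your additional appeal to \cite[Theorem 4.5]{MR3006691} is harmless but redundant.
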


\subsection{Remarks on the proof and comparison with \cite{MR3320570}}\label{subsec:comparison}

In \cite{MR3320570}, the authors establish an equivalence between the stable categories $\underline{\SCM}_{N_f}(R)$ and $\underline{\CM}(T)$ by composing a sequence of four equivalences. As depicted in the top row of the diagram below, their approach is largely algebraic and passes through singularity categories. Specifically, \cite[Theorem 3.8]{MR3320570} utilizes the Frobenius structure on $\SCM_{N_f}(R)$ to prove that $\End_R(N_f)$ is an Iwanaga--Gorenstein ring. The general theory of such rings ensures that the category of Gorenstein projective modules $\GP(\End_R(N_f))$ is a Frobenius category, yielding a triangle equivalence $\Hom_R(N_f,-) \colon \underline{\SCM}_{N_f}(R) \xrightarrow{\sim} \underline{\GP}(\End_R(N_f))$. From there, the sequence proceeds to $\underline{\CM}(T)$ via the singularity categories $D_{\sg}(\End_R(N_f))$ and $D_{\sg}(X)$ using Buchweitz's and Orlov's theorems \cite[Theorem 1.4]{MR3320570}.

\begin{equation*}
\xymatrix@C=1.2em{
\underline{\SCM}_{N_f}(R) \ar[r]_(0.4){\sim} \ar@{=}[d] & 
\underline{\GP}(\End_R(N_f)) \ar[rr]^{\raisebox{0.8ex}{\text{\scriptsize Buchweitz}}}_{\sim} 
& & 
D_{\sg}(\End_R(N_f)) \ar[r]_(0.6){\sim} & 
D_{\sg}(X) \ar[rr]^{\raisebox{0.8ex}{\text{\scriptsize Buchweitz+Orlov}}}_{\sim} & & 
\underline{\CM}(T) \ar@{=}[d] \\
\underline{\SCM}_{N_f}(R) \ar[r]^{\sim}_{\wt{(-)}} & 
\underline{\SCM}(X) \ar[rrrrr]^{\sim}_{\iota^*} & & & & & 
\underline{\CM}(T)
}
\end{equation*}

In contrast, our proof provides a more direct, geometric route, bypassing the use of singularity categories entirely, as indicated by the bottom row of the diagram.

We remark that this diagram commutes up to natural isomorphism, which can be verified by tracing the precise definitions of the equivalences in the top row. This commutativity and the relationship between the two rows yield a few notable consequences. For instance, the isomorphism $\End_R(N_f) \cong \End_X(\wt{N_f})$ from \eqref{eqn:N=wtN} implies that the tilting equivalence $\RHom_X(\widetilde{N_f},-)$ in \eqref{eqn:tilting} restricts to an equivalence $\SCM(X) \simeq \GP(\End_R(N_f))$. Furthermore, the diagram immediately yields the following Buchweitz-type equivalence.

\begin{theorem}\label{thm:buchweitz}
There is a triangle equivalence between $\underline{\SCM}(X)$ and $D_{\sg}(X)$.
\end{theorem}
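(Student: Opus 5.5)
The plan is to compose equivalences that are already available in this section, and to read off from the comparison diagram the factorisation through $D_{\sg}(X)$. We work under the standing assumption of this section that $X$ has at worst rational double points.

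\textbf{Step 1 (geometric side).} Proposition \ref{prop:SCMXCMT} gives a triangle equivalence $\iota^*\colon \underline{\SCM}(X)\xrightarrow{\sim}\underline{\SCM}(T)$. Since $T$ is a finite product of complete rational double points, $T$ is Gorenstein, so $\omega_T\cong T$. Hence $\SCM(T)=\CM(T)$, and the Frobenius structures agree: by Theorem \ref{thm:Frob_SCM} applied to $T$ with $g^c_T=\id$, the projective objects are $\add T$. Therefore $\underline{\SCM}(X)\simeq\underline{\CM}(T)$.

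\textbf{Step 2 (singularity side).} We compare $\underline{\CM}(T)$ with $D_{\sg}(X)$ in two moves.
\begin{itemize}
\item Orlov's localisation theorem: the singular locus of $X$ is the finite set $\{p_s\}$, so the idempotent completion of $D_{\sg}(X)$ is equivalent to $\bigoplus_s D_{\sg}(\widehat{\mc{O}_{X,p_s}})=D_{\sg}(T)$.
\item Buchweitz's theorem for the Gorenstein ring $T$: $D_{\sg}(T)\simeq\underline{\CM}(T)$.
\end{itemize}

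\textbf{Step 3 (the main obstacle).} Orlov's theorem only identifies $D_{\sg}(X)$ up to idempotent completion, so we must show $D_{\sg}(X)$ is already idempotent complete. I would approach this as follows. The functor $\underline{\SCM}(X)\to D_{\sg}(X)$, sending a sheaf to itself, is well defined, because projective objects of $\SCM(X)$ lie in $\mk{V}(X)$ and are therefore perfect. Its composite with the Orlov functor $D_{\sg}(X)\to D_{\sg}(T)\simeq\underline{\CM}(T)$ is exactly $\iota^*$, which is an equivalence by Step 1. Consequently $D_{\sg}(X)\to D_{\sg}(T)$ is essentially surjective. It is also fully faithful, since Orlov's functor is fully faithful onto its image. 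Hence it is an equivalence, and idempotent completeness is not needed separately.

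\textbf{Step 4 (compatibility with the comparison diagram).} Alternatively, one can read the result off the diagram: the bottom-row composite $\underline{\SCM}(X)\to\underline{\CM}(T)$ equals the top-row composite, so $\underline{\SCM}(X)\simeq\underline{\CM}(T)\simeq D_{\sg}(X)$, the last step via the inverse of the Buchweitz+Orlov equivalence already used in \cite{MR3320570}.

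The only real check is the commutativity used in Step 3. Both composites are restriction to the completion, so this amounts to the compatibility of the Buchweitz embedding with flat pullback, which is routine.
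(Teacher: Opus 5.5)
Your proposal is correct. Your Step 4 is essentially the paper's argument. The paper just reads the equivalence off the comparison diagram: it composes Theorem \ref{thm:DX} (or Proposition \ref{prop:SCMXCMT}) with the chain of equivalences from \cite{MR3320570}, which already passes through $D_{\sg}(X)$. That chain goes either via $\underline{\GP}(\End_R(N_f))$ and $D_{\sg}(\End_R(N_f))$, or via the Buchweitz--Orlov identification $D_{\sg}(X)\simeq\underline{\CM}(T)$. For mere existence, no commutativity of the diagram is needed.

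Your Step 3, however, is a genuinely different and more self-contained route.

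\begin{itemize}
\item You use Orlov's completion theorem only in a weak form: the natural flat pullback $D_{\sg}(X)\to D_{\sg}(T)$ is fully faithful. This is legitimate because Orlov's equivalence of idempotent completions is induced by this natural functor, and $D_{\sg}(X)$ embeds fully faithfully in its idempotent completion. You should state this explicitly rather than say ``fully faithful onto its image''. The right-hand side is harmless, since $\underline{\CM}$ of a complete local ring is Krull--Schmidt.
\item You get essential surjectivity from Proposition \ref{prop:SCMXCMT} together with Buchweitz for the Gorenstein ring $T$. The composite $\underline{\SCM}(X)\to D_{\sg}(X)\to D_{\sg}(T)\simeq\underline{\CM}(T)$ is literally $\iota^*$.
\end{itemize}

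This proves more than the theorem states. It shows that the canonical functor $\underline{\SCM}(X)\to D_{\sg}(X)$ is itself the equivalence, which is a Buchweitz-type statement in the strict sense. It also shows that $D_{\sg}(X)$ is idempotent complete. Neither conclusion uses the Iwanaga--Gorenstein property of $\End_R(N_f)$, on which the chain of \cite{MR3320570}, and hence their proof of idempotent completeness, rests.

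The paper's route is a one-line deduction instead. It inherits all hypotheses from \cite{MR3320570}, so it needs no check that Orlov's theorem applies to $X$. (It does apply: $X$ has isolated singularities and has enough locally free sheaves, being projective over an affine scheme.) Nor does it need the triangle structure of the canonical functor. You do not strictly need that structure either. The inverse of the triangle equivalence $D_{\sg}(X)\to D_{\sg}(T)$, composed with Buchweitz's equivalence and $\iota^*$, is already a composite of triangle equivalences. The canonical functor is isomorphic to this composite as an additive functor.
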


\begin{remark}
In Theorem \ref{thm:buchweitz}, both of $\underline{\SCM}(X)$ and $D_{\sg}(X)$ can be defined without assuming $X$ has at most rational double points. Suppose that $X$ does not necessarily have rational doble points. Then we obtain $\underline{\SCM}(X)\simeq D_{\sg}(X^c)$ by Proposition  \ref{prop:SCMXCMT} and Theorem \ref{thm:buchweitz}.       
\end{remark}

We also obtain the following consequence.
\begin{corollary}
Let $R$ be a rational double point, and $f\colon X\to \Spec R$ be a minimal partial resolution. Then the triangulated category $D_{\sg} (X)$ is obtained by the additive quotient of $D_{\sg} (R)$ by $\add N_f$.    
\end{corollary}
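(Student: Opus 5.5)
The statement is a consequence of the chain of equivalences already established. I would realize $D_{\sg}(X)$ as the stable category $\underline{\SCM}_{N_f}(R)$ and then compare this stable category with $\underline{\CM}(R)\simeq D_{\sg}(R)$ purely at the level of additive quotients of the same underlying category.

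\emph{Step 1 (reduce to the setting of Section 5).} Since $R$ is a rational double point, every exceptional curve of the minimal resolution $f\circ g$ is a $(-2)$-curve. Hence any minimal partial resolution $X$ has at worst rational double points, $g^c=\id_X$, and $N_{f\circ g^c}=N_f$. So the hypotheses of Theorem \ref{thm:DX} and Theorem \ref{thm:buchweitz} hold. Combining them gives triangle equivalences
\[
D_{\sg}(X)\simeq \underline{\SCM}(X)\simeq \underline{\SCM}_{N_f}(R).
\]

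\emph{Step 2 (identify the categories on the $R$ side).} Since $R$ is Gorenstein, $\omega_R\cong R$. Every reflexive $R$-module satisfies $\Ext^1_R(M,\omega_R)=0$ (Remark \ref{rem:CMSCM}(iv)), so $\SCM(R)=\CM(R)$ as additive categories. By Buchweitz's theorem, $D_{\sg}(R)\simeq \underline{\CM}(R)=\CM(R)/[\add R]$. On the other hand, $\proj \SCM_{N_f}(R)=\add N_f$ (the IKWY Frobenius structure recalled in Section 5). Therefore, as an additive category, $\underline{\SCM}_{N_f}(R)$ is the ideal quotient $\CM(R)/[\add N_f]$, where $[\add N_f]$ is the ideal of morphisms factoring through $\add N_f$. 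The exact structure plays no role in the Hom-spaces of the stable category beyond determining the projectives.

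\emph{Step 3 (compare the ideals).} Since $R$ is a direct summand of $N_f$, we have $[\add R]\subset[\add N_f]$. Hence
\[
\CM(R)/[\add N_f]\;\cong\;\bigl(\CM(R)/[\add R]\bigr)/[\add N_f]=\underline{\CM}(R)/[\add N_f].
\]
Transporting along Buchweitz's equivalence $\underline{\CM}(R)\simeq D_{\sg}(R)$, which sends $N_f$ to its image in $D_{\sg}(R)$, we obtain $D_{\sg}(X)\simeq D_{\sg}(R)/[\add N_f]$. The triangulated structure on the left is the one coming from the Frobenius category $\SCM_{N_f}(R)$.

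\emph{Main obstacle.} The delicate point is Step 2: identifying the Hom-spaces of $\underline{\SCM}_{N_f}(R)$ with morphisms modulo those factoring through $\add N_f$. This relies on $\add N_f$ being exactly the projective-injective objects of the IKWY structure. One then has to observe that the phrase ``additive quotient'' refers to this ideal quotient and not to a Verdier quotient. With that understood, the rest is bookkeeping with the inclusion of ideals.
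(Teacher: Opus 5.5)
Your proposal is correct and follows essentially the same route as the paper: identify $D_{\sg}(X)\simeq\underline{\SCM}(X)\simeq\underline{\SCM}_{N_f}(R)=\SCM(R)/[\add N_f]$ via Theorems \ref{thm:DX} and \ref{thm:buchweitz}. Then use $\SCM(R)=\CM(R)$ and $[\add R]\subset[\add N_f]$ to rewrite this as $\underline{\CM}(R)/[\add N_f]$, and transport along Buchweitz's equivalence $\underline{\CM}(R)\simeq D_{\sg}(R)$. Your explicit check that $X$ has only rational double points, so that the hypotheses of Section 5 hold, is a detail the paper leaves implicit.
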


\begin{proof}
For a rational double point $R$, we have $\underline{\SCM}(R)=\underline{\CM}(R)$. On the other hand, we have
\[\underline{\SCM}_{N_f}(R)=\SCM(R)/\add N_f \simeq \underline{\SCM}(R)/\add N_f \simeq \underline{\CM}(R)/\add N_f,\]
where we regard $N_f$ as an object of $\underline{\SCM}(R)=\underline{\CM}(R)$ in the last two terms. Here, all quotients denote the additive quotients by the ideal of morphisms factoring through $\add N_f$.
Applying Theorem \ref{thm:DX} and Theorem \ref{thm:buchweitz}, we obtain triangle equivalences 
\[\underline{\CM}(R)/\add N_f \simeq \underline{\SCM}_{N_f}(R) \simeq \underline{\SCM}(X) \simeq D_{\sg}(X).\]
Since the equivalence $\underline{\CM}(R)\simeq D_{\sg}(R)$ is induced by the natural functor $\CM(R)\hookrightarrow D^b(R)$, we conclude that $D_{\sg}(X) \simeq D_{\sg}(R)/\add N_f$.
\end{proof}

It is an interesting question whether a similar picture holds for other kinds of singularities.

\bibliographystyle{plain}
\bibliography{non-standard}

\noindent
Hokuto Uehara

Department of Mathematical Sciences,
Graduate School of Science,
Tokyo Metropolitan University,
1-1 Minamiohsawa,
Hachioji,
Tokyo,
192-0397,
Japan 

{\em e-mail address}\ : \  hokuto@tmu.ac.jp
\ \vspace{0mm} \\
\end{document}